\numberwithin{equation}{section}
\numberwithin{figure}{section}
  \theoremstyle{plain}
  \newtheorem*{thm*}{\protect\theoremname}
\theoremstyle{plain}
\newtheorem{thm}{\protect\theoremname}[section]
  \theoremstyle{definition}
  \newtheorem{example}[thm]{\protect\examplename}
  \theoremstyle{plain}
  \newtheorem{lem}[thm]{\protect\lemmaname}
  \theoremstyle{definition}
  \newtheorem{defn}[thm]{\protect\definitionname}
  \theoremstyle{plain}
  \newtheorem{prop}[thm]{\protect\propositionname}
  \theoremstyle{plain}
  \newtheorem{cor}[thm]{\protect\corollaryname}
  \theoremstyle{remark}
  \newtheorem{rem}[thm]{\protect\remarkname}
  \providecommand{\corollaryname}{Corollary}
  \providecommand{\definitionname}{Definition}
  \providecommand{\examplename}{Example}
  \providecommand{\lemmaname}{Lemma}
  \providecommand{\propositionname}{Proposition}
  \providecommand{\remarkname}{Remark}
  \providecommand{\theoremname}{Theorem}
\providecommand{\theoremname}{Theorem}
\begin{document}

\title{On the theory of normalized Shintani L-function and its application
to Hecke L-function}
\begin{abstract}
We define the class of normalized Shintani L-functions of several
variables. Unlike Shintani zeta functions, the normalized Shintani
L-function is a holomorphic function. Moreover it satisfies a good
functional equation. We show that any Hecke L-function of a totally
real field can be expressed as a diagonal part of some normalized
Shintani L-function of several variables. This gives a good several
variables generalization of a Hecke L-function of a totally real field.
This also gives a new proof of the functional equation of the Hecke
L-function of a totally real field.
\end{abstract}

\author{minoru hirose}

\thanks{This work was supported by JSPS Grant-in-Aid for Scientific Research
Number 23\textperiodcentered1733}

\maketitle
\tableofcontents{}

\section{Introduction}

The study of the leading term of the Taylor expansion of a Hecke L-function
is one of the important problems in number theory. For a number field
$K$ and a Hecke character $\chi$ of $K$, the Hecke L-function $L(s,\chi)$
is defined by 
\[
L(s,\chi)=\sum_{I}\frac{\chi(I)}{N(I)^{s}}
\]
where $I$ runs through all nonzero ideals of $O_{K}$ and $N(I)$
is the norm of $I$. In this paper, we deal only with Hecke L-functions
of totally real fields.

In this paper, we will consider a generalization to several variables
of the Hecke L-function. Let $K$ be a totally real field of degree
$n$, and let $\chi$ be a Hecke character of $K$. For simplicity,
in this introduction, we assume that the class number of $K$ is equal
to $1$ and that $\chi$ is finite order. Let $(\rho_{\mu})_{\mu=1}^{n}$
be the tuple of all the embeddings of $K$ to $\mathbb{R}$. Since
the norm of a principal ideal $(x)$ is equal to the absolute value
of $\prod_{\mu}\rho_{\mu}(x)$, $L(s,\chi)$ can be written as follows:
\begin{equation}
L(s,\chi)=\sum_{x\in(O_{K}-\{0\})/O_{K}^{\times}}\chi(x)\prod_{\mu=1}^{n}|\rho_{\mu}(x)|^{-s}.\label{eq:onevariableHecke}
\end{equation}
As a generalization of (\ref{eq:onevariableHecke}),  we may consider
a Hecke L-function of several variables as follows:
\[
L((s_{1},\dots,s_{n}),\chi)=\sum_{x\in(O_{K}-\{0\})/O_{K}^{\times}}\chi(x)\prod_{\mu=1}^{n}\left|\rho_{\mu}(x)\right|^{-s_{\mu}}.
\]
(Note that this sum depends on the choice of the representative $x$.)
The main theorem of this paper is an extension of this idea. Let $\infty_{1},\dots,\infty_{n}$
be the infinite places of $K$ corresponding to $\rho_{1},\dots,\rho_{n}$
respectively. For $1\leq\mu\leq n$, we set
\[
\sigma(\mu)=\begin{cases}
0 & \chi\mbox{ is unramified at }\infty_{\mu}\\
1 & \chi\mbox{ is ramified at }\infty_{\mu}.
\end{cases}
\]
For $m\in\mathbb{Z}$, we set $k_{m}=\#\{1\leq\mu\leq n\mid\sigma(\mu)\equiv m\pmod{2}\}$.
Note that $L(s,\chi)(s-m)^{-k_{m}}$ is holomorphic for any non-positive
integer $m$. Let us state the main theorem of this paper.
\begin{thm*}
\label{Theorem_intro_main}There is a natural way to construct holomorphic
function $F:\mathbb{C}^{n}\to\mathbb{C}$ of several variables which
satisfies the following conditions:\end{thm*}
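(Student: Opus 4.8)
The plan is to construct $F$ from Shintani's decomposition of the totally positive cone and then to read off all of the asserted properties from the general theory of normalized Shintani L-functions developed in the earlier sections. \emph{Step 1: from the Hecke L-function to a sum of Shintani zeta functions.} I would first invoke Shintani's decomposition: choose a fundamental domain for the action of the group of totally positive units $O_{K}^{\times,+}$ on the cone $\{\xi\in K\otimes\mathbb{R}\mid\rho_{\mu}(\xi)>0\text{ for all }\mu\}$ of the form $\bigsqcup_{k}\mathcal{C}_{k}$, where each $\mathcal{C}_{k}=\{\sum_{i=1}^{r_{k}}c_{i}v_{k,i}\mid c_{i}>0\}$ is an open simplicial cone with $r_{k}\le n$ and generators $v_{k,i}\in K$ totally positive. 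Since $N((x))=\prod_{\mu}|\rho_{\mu}(x)|$ and the archimedean ramification of $\chi$ is recorded exactly by the signs $\mathrm{sgn}(\rho_{\mu}(x))$, that is by the function $\sigma$, expanding (\ref{eq:onevariableHecke}) over this decomposition --- splitting also over the ideal class group and absorbing the finite conductor of $\chi$ into shift vectors $\mathbf{x}_{k}$ with rational coordinates --- rewrites the several variable Hecke L-function as a finite $\mathbb{C}$-linear combination $\sum_{k}\gamma_{k}\,\zeta(\mathbf{s};\mathcal{C}_{k},\mathbf{x}_{k})$ of Shintani zeta functions attached to the cones $\mathcal{C}_{k}$ and shifts $\mathbf{x}_{k}$.

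\emph{Step 2: normalization.} I would then replace each $\zeta(\mathbf{s};\mathcal{C}_{k},\mathbf{x}_{k})$ by its normalized counterpart $Z(\mathbf{s};\mathcal{C}_{k},\mathbf{x}_{k})$ as defined in the earlier sections --- an entire function of $\mathbf{s}\in\mathbb{C}^{n}$ obtained from $\zeta$ by the Barnes type contour normalization (dividing the associated multiple Hankel integral by $\prod_{\mu}\Gamma(s_{\mu})$) --- and set $F:=\sum_{k}\gamma_{k}\,Z(\mathbf{s};\mathcal{C}_{k},\mathbf{x}_{k})$. By construction $F$ is a normalized Shintani L-function in the sense of this paper, so it is holomorphic on all of $\mathbb{C}^{n}$ and inherits the functional equation established there, which relates $F$ to the analogous object attached to $\bar\chi$ under $s_{\mu}\mapsto1-s_{\mu}$ (up to a Gauss sum factor). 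A priori $F$ depends on the chosen cone decomposition, but its diagonal restriction, being $L(s,\chi)$ up to an elementary factor, does not; the theorem only asserts that one such $F$ exists.

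\emph{Step 3: the diagonal identity and the functional equation.} Restricting to $s_{1}=\cdots=s_{n}=s$, each normalized piece $Z(s,\dots,s;\mathcal{C}_{k},\mathbf{x}_{k})$ equals $\zeta(s,\dots,s;\mathcal{C}_{k},\mathbf{x}_{k})$ times an elementary factor --- a finite product of shifted Gamma functions, sines and polynomials in $s$ depending only on $r_{k}$ and the relevant parities --- coming from the standard comparison of a Hankel contour with a Mellin integral. Summing over $k$ yields $F(s,\dots,s)=c(s)\,L(s,\chi)$ with $c(s)$ explicit; here one must check, from the definitions of $\sigma(\mu)$ and $k_{m}$, that the poles of $c(s)$ at the non-positive integers $m$ have order exactly $k_{m}$, so that $c(s)L(s,\chi)$ is entire, consistently with the holomorphy of $F$ (and, for the trivial character, one inserts the obvious extra factor to absorb the pole at $s=1$). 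Combining this identity with the functional equation of $F$ from Step 2 and restricting the latter to the diagonal then reproduces the classical functional equation of $L(s,\chi)$, which is the promised new proof.

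\emph{Main obstacle.} I expect the difficulty to lie almost entirely in the bookkeeping of Steps 1 and 3 rather than in any new analysis: faithfully carrying the Hecke character --- its finite conductor and its signs at the infinite places --- through Shintani's cone decomposition, and then matching the resulting elementary diagonal factor $c(s)$ precisely against the archimedean gamma factors of $L(s,\chi)$ so that the trivial-zero orders come out as the stated $k_{m}$. The genuinely analytic inputs, namely the holomorphy of the normalized Shintani pieces and their functional equation, are already in hand from the earlier sections, so the new content here is combinatorial and number-theoretic.
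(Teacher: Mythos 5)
Your proposal misses the two ideas that actually make the construction work, and your Step 2 is not what ``normalization'' means in this paper. First, holomorphy. The Shintani zeta functions attached to your cones $\mathcal{C}_{k}$ are genuinely not entire, and no division of a Hankel integral by $\prod_{\mu}\Gamma(s_{\mu})$ repairs this: that quotient \emph{is} the standard analytic continuation and it still has poles. In the paper, holomorphy of $L_{\sigma}(\bm{s},\Phi,\mathbb{B},\rho)$ requires $F(\Phi,\mathbb{B})\in\mathcal{S}(V^{*}(\mathbb{R}))$, which holds only when $\Phi$ satisfies the regularity conditions $P_{1}$ and $P_{2}$; for the standard function $\Phi_{\chi,\mathfrak{b}}$ this fails, and one must twist by $\gamma=(1-N(\mathfrak{p})\chi_{I}(\mathfrak{p})\mathfrak{p}^{-1})(1-\chi_{I}^{-1}(\mathfrak{q})\mathfrak{q})$ for auxiliary degree-one primes (Cassou-Nogu\`es' trick, Theorem \ref{thm:regularization}). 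This is precisely where the factor $(1-\chi(\mathfrak{p})N(\mathfrak{p})^{1-s})(1-\chi(\mathfrak{q})N(\mathfrak{q})^{-s})$ in condition (1) comes from; your ``elementary factor $c(s)$ of Gamma functions and sines'' does not match the statement, and your remark that $c(s)$ should have \emph{poles} of order $k_{m}$ at non-positive integers would contradict the asserted holomorphy of $F$ on the diagonal. Moreover, the normalized L-function $L_{\sigma}$ is by definition the sum over all $2^{n}$ sign chambers $g\in\{\pm1\}^{n}$ weighted by $\prod_{\mu}g_{\mu}^{1-\sigma(\mu)}$ (it has no Dirichlet series for $n\geq2$); identifying this sum with the single-chamber Dirichlet series defining $L(s,\chi)$ is nontrivial and rests on Proposition \ref{prop:another_cubic_fandamental_domain}, i.e.\ on the fact that the chambers agree modulo $I_{n}(K^{\times},E_{K})+W+\ker\partial_{n}$. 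You do not address this step at all.

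Second, the functional equation. Your use of Shintani's decomposition into simplicial cones of dimensions $r_{k}\leq n$ is structurally incompatible with the duality that drives the functional equation: the duality map $\varphi$ annihilates cones spanned by linearly dependent (in particular, fewer than $n$) vectors, so the dual of your fan would lose the lower-dimensional pieces and the right-hand side could not be identified with anything. The paper instead works exclusively with $n$-dimensional simple cones organized into a \emph{cubic fundamental domain fan} $F(x;\epsilon_{1},\dots,\epsilon_{n-1})=\sum_{\tau}\mathrm{sgn}(\tau)\Lambda(x,\epsilon_{\tau(1)}x,\dots)$ (Yoshida's Theorem \ref{Fact:exists-epsilon}), and the functional equation of the Hecke L-function then requires the hardest result of the paper, Theorem \ref{thm:dual_of_fandamental_domain_is_fandamental_domain}: the dual fan of a fundamental domain fan is again a fundamental domain fan, so that $L_{\sigma}(\bm{1}-\bm{s},\hat{\Phi},\varphi(\mathbb{D}),\rho^{*})$ can itself be recognized as a (regularized) Hecke L-function of $\chi^{-1}$. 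Saying that $F$ ``inherits the functional equation'' from the general theory skips exactly this point, which is where the real work lies.
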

\begin{enumerate}
\item There exist prime ideals $\mathfrak{p},\mathfrak{q}$ of $K$ such
that 
\[
F(s,\dots,s)=L(s,\chi)(1-\chi(\mathfrak{\mathfrak{p}})N(\mathfrak{p})^{1-s})(1-\chi(\mathfrak{q})N(\mathfrak{q})^{-s})\ \ (s\in\mathbb{C}).
\]

\item $F(s_{1},\dots,s_{n})$ has a functional equation.
\item $F(s_{1},\dots,s_{n})$ has zeros at $s_{\mu}-m$ for all $1\leq\mu\leq n$
and non-positive integers $m$ such that $m\equiv\sigma(\mu)\pmod{2}$. 
\end{enumerate}
See Theorem \ref{thm:main_thm} for the precise statement. There are
four purposes of this paper. The first is to establish a theory of
normalized Shintani L-functions. The second is to construct $F(s_{1},\dots,s_{n})$
by normalized Shintani L-functions. The third is to regard the functional
equation of a Hecke L-function as a special case of the functional
equation of the normalized Shintani L-function. The fourth is to give
a theory of fans, which will be used for these purposes.

In this introduction, we give a rough sketch of the construction of
$F(s_{1},\dots,s_{n})$. Firstly we recall the construction of the
Hecke L-function by means of Shintani zeta functions, which was done
by Shintani \cite{MR0427231}. Next, we explain the construction of
the Hecke L-function by means of Shintani L-functions. Finally, we
explain the construction of the Hecke L-function by means of normalized
Shintani L-function. We believe that the Shintani L-function is more
useful than the Shintani zeta function, and that the normalized Shintani
L-function is more useful than the Shintani L-function. 

One of our motivations is the Shintani formula. By using a Hecke L-function
of several variables, we can decompose the derivatives of a Hecke
L-function. If $L((s_{1},\dots,s_{n}))$ is holomorphic at $(s_{1},\dots,s_{n})\in(0,\dots,0)$,
then we have

\begin{equation}
c=\sum_{\mu=1}^{n}c^{(\mu)}\label{eq:shintani-formula2}
\end{equation}

where
\begin{align*}
c & =\frac{\partial}{\partial s}L((s,\dots,s),\chi)\vert_{s=0}\\
c^{(1)} & =\frac{\partial}{\partial s}L((s,0,\dots,0),\chi)\vert_{s=0}\\
 & \vdots\\
c^{(n)} & =\frac{\partial}{\partial s}L((0,\dots,0,s),\chi)\vert_{s=0}.
\end{align*}
Such a decomposition is called a Shintani formula and becomes more
and more important recently. In some cases, the values $c^{(\mu)}$
has more detailed information than $c$ (\cite{MR2499409},\cite{MR2011848}).
They discussed the Shintani formula in \cite{MR2499409} or \cite{MR2011848},
although an L-function of several variables did not appear explicitly.
The equation (\ref{eq:shintani-formula2}) does not necessarily hold
if $L((s_{1},\dots,s_{n}),\chi)$ is not holomorphic at $\bm{s}=(0,\dots,0)$.
Therefore, it is desirable to construct $L((s_{1},\dots,s_{n}))$
by holomorphic functions.

The Shintani zeta function has historically been used to construct
$L((s_{1},\dots,s_{n}),\chi)$. The Shintani zeta function was introduced
by Shintani \cite{MR0427231} for the application to the Hecke L-function.
The Shintani zeta function is defined by the Dirichlet series 
\[
\zeta_{n,r}(\bm{s},A,\bm{x})=\sum_{m_{1},\dots,m_{r}=0}^{\infty}\prod_{\nu=1}^{n}\frac{1}{(a_{\nu1}(x_{1}+m_{1})+\cdots+a_{\nu r}(x_{r}+m_{r}))^{s_{\nu}}}
\]
where $A=(a_{\nu\mu})_{1\leq\nu\leq n,1\leq\mu\leq r}$ with $a_{\nu\mu}>0\ (1\leq\nu,\mu\leq n)$,
$\bm{x}=(x_{1},\dots,x_{r})\in\mathbb{R}_{>0}^{n}$, $\bm{s}=(s_{1},\dots,s_{n})\in\{s\in\mathbb{C}\mid\Re(s)>r/n\}^{n}$.
$\zeta_{n,r}(\bm{s},A,\bm{x})$ is meromorphically continued to the
whole $\bm{s}\in\mathbb{C}^{n}$. We give an example to explain how
to use the Shintani zeta function for the construction of $L((s_{1},\dots,s_{n}),\chi)$.
Let us consider the case where $K=\mathbb{Q}(\sqrt{2})$ and $\chi=1$.
Let us consider the three sets 
\begin{align*}
X_{1} & =\{(m_{1}+1)+(m_{2}+1)(3+2\sqrt{2})\mid m_{1}\geq0,m_{2}\geq0\},\\
X_{2} & =\{(m_{1}+\frac{1}{2})+(m_{2}+\frac{1}{2})(3+2\sqrt{2})\mid m_{1}\geq0,m_{2}\geq0\},\\
X_{3} & =\{(m_{1}+1)\mid m_{1}\geq0\}.
\end{align*}
Since $O_{K}-\{0\}=\bigsqcup_{\epsilon\in O_{K}^{\times}}\epsilon(X_{1}\sqcup X_{2}\sqcup X_{3})$,
we have
\begin{align*}
L((s_{1},s_{2}),\chi)= & \sum_{j=1}^{3}\sum_{x\in X_{j}}\prod_{\mu=1}^{2}\rho_{\mu}(x)^{-s_{\mu}}\\
= & \zeta_{2,2}(\bm{s},\left(\begin{array}{cc}
1 & 3+2\sqrt{2}\\
1 & 3-2\sqrt{2}
\end{array}\right),(1,1))+\zeta_{2,2}(\bm{s},\left(\begin{array}{cc}
1 & 3+2\sqrt{2}\\
1 & 3-2\sqrt{2}
\end{array}\right),(\frac{1}{2},\frac{1}{2}))\\
 & +\zeta_{2,1}(\bm{s},\left(\begin{array}{c}
1\\
1
\end{array}\right),(1)).
\end{align*}
However, the Shintani zeta function is not regular at $\bm{s}=(0,\dots,0)$,
and the equation (\ref{eq:shintani-formula2}) does not hold anymore.
For the first derivatives, one obtains a correct formula by adding
an elementary correction term. However, for the higher derivatives,
it seems difficult to obtain such an elementary expression of the
correction term.

In \cite{SatoHiroseFunctional}, Sato and the author studied the Shintani
L-functions. The Shintani L-function is defined by the Dirichlet series
\begin{equation}
L(\bm{s},A,\bm{x},\bm{y})=\sum_{m_{1},m_{2},\cdots,m_{n}=0}^{\infty}\frac{e^{2\pi i(m_{1}y_{1}+\cdots+m_{n}y_{n})}}{\prod_{1\leq\nu\leq n}(\sum_{1\leq\mu\leq n}a_{\nu\mu}(x_{\mu}+m_{\mu}))^{s_{\nu}}}\label{eq:ShintaniL_x_y}
\end{equation}
where $\bm{x}=(x_{\mu})_{\mu=1}^{n}\in(0,1)^{n}$, $\bm{y}=(y_{\mu})_{\mu=1}^{n}\in(0,1)^{n}$,
$\bm{s}=(s_{\mu})_{\mu=1}^{n}\in\{s\mid\Re(s)>0\}^{n}$, $A=\left(a_{\nu\mu}\right)_{1\leq\nu,\mu\leq n}\in GL_{n}\left(\mathbb{R}\right)$
with $a_{\nu\mu}>0\ (1\leq\nu,\mu\leq n)$. Here $(0,1)$ is the open
interval. What is different from the Shintani zeta functions is the
numerator $e^{2\pi i(m_{1}y_{1}+\cdots+m_{n}y_{n})}$ and the condition
$n=r$, $\bm{x},\bm{y}\in(0,1)^{n}$. Note that excluded (singular)
case when $\bm{y}=\bm{0}$, $L(\bm{s},A,\bm{x},\bm{y})$ reduces to
the Shintani zeta function $\zeta_{n,n}(\bm{s},A,\bm{x})$.

The Shintani L-function is analytically continued to the whole $\mathbb{C}^{n}$
as a holomorphic function. This is one of the most important properties
of the Shintani L-function. The Shintani zeta function does not have
this property. Since the Shintani L-function is holomorphic for all
$\bm{s}\in\mathbb{C}^{n}$, if we can express the Hecke L-function
by the Shintani L-functions, we can get a formula (\ref{eq:shintani-formula2})
without correction terms and also a formula for higher derivatives.

In this paper, we use a slightly different definition of the Shintani
L-function. Let $V$ be an $n$-dimensional vector space over $\mathbb{Q}$.
Let $\mathcal{S}(V)$ denote the set of all functions $\Phi:V\to\mathbb{C}$
which have the following properties:
\begin{itemize}
\item There exists $\mathbb{Z}$-lattice $\mathbb{L}\subset V$ such that
$\Phi(x)=0$ for all $x\notin\mathbb{L}$.
\item There exists $\mathbb{Z}$-lattice $\mathbb{L}\subset V$ such that
$\Phi(x+w)=\Phi(x)$ for all $x\in V$, $w\in\mathbb{L}$. 
\end{itemize}
The symbol $\mathcal{S}(V)$ is used only in this section. Let $u_{1},\dots,u_{n}\in V$
be a basis of $V$. We call the symbol $\Lambda(u_{1},\dots,u_{n})$
a cone. We say that $\Phi\in\mathcal{S}(V)$ is regular with respect
to $\Lambda(u_{1},\dots,u_{n})$ if it satisfies the following conditions:
\begin{enumerate}
\item $\Phi(t_{1}u_{1}+\cdots+t_{n}u_{n})=0$ for all $t_{1},\dots,t_{n}\in\mathbb{Q}$
such that $0\in\{t_{1},\dots,t_{n}\}$.
\item $\lim_{m\to\infty}\frac{1}{m}\sum_{0<x<m}\Phi(v+ux)=0$ for all $v\in V$
and $u\in\{u_{1},\dots,u_{n}\}$.
\end{enumerate}
Very roughly speaking, the condition (1) and (2) corresponds to the
condition $\bm{x}\in(0,1)^{n}$ and $\bm{y}\in(0,1)^{n}$ in (\ref{eq:ShintaniL_x_y})
respectively. Let $\Phi\in\mathcal{S}(V)$ be a function which is
regular with respect to $\Lambda(u_{1},\dots,u_{n})$. Let $\rho=(\rho_{\mu})_{\mu=1}^{n}:V\otimes\mathbb{R}\to\mathbb{R}^{n}$
be an isomorphism. Assume that 
\begin{equation}
\rho(u_{1}),\dots,\rho(u_{n})\in(\mathbb{R}_{>0})^{n}.\label{eq:condition_positive}
\end{equation}
Then $L(\bm{s},\Phi,\Lambda(u_{1},\dots,u_{n}),\rho)$ is defined
by
\[
L(\bm{s},\Phi,\Lambda(u_{1},\dots,u_{n}),\rho)=\sum_{v\in C(u_{1},\dots,u_{n})}\Phi(v)\prod_{\mu=1}^{n}\rho_{\mu}(v)^{-s_{\mu}}
\]
where $C(u_{1},\dots,u_{n})=\{t_{1}u_{1}+\cdots+t_{n}u_{n}\mid t_{1},\dots,t_{n}\in\mathbb{Q}_{>0}\}$
and $\bm{s}=(s_{\mu})_{\mu=1}^{n}\in\{s\mid\Re(s)>0\}^{n}$. The function
$L(\bm{s},\Phi,\Lambda(u_{1},\dots,u_{n}),\rho)$ is analytically
continued to the whole $\bm{s}\in\mathbb{C}^{n}$ as a holomorphic
function. 

In some cases we can express a Hecke L-function by a sum of Shintani
L-functions directly. 
\begin{example}
\label{ex:direct_const}Let $K=\mathbb{Q}(\sqrt{2})$ be a totally
real field and $\chi$ a Hecke character of $K$ defined by $\chi=\chi_{f}\chi_{\infty}$
where 
\[
\chi_{f}((x))=\begin{cases}
1 & x\equiv1\pmod{2}\\
-1 & x\equiv1+\sqrt{2}\pmod{2},
\end{cases}
\]
\[
\chi_{\infty}((x))=\frac{\rho_{1}(x)}{\left|\rho_{1}(x)\right|}\frac{\rho_{2}(x)}{\left|\rho_{2}(x)\right|}.
\]
We define $\Phi\in\mathcal{S}(K)$ by letting 
\[
\Phi(x)=\begin{cases}
\chi_{f}(x) & x\in O_{K}\setminus\left(\sqrt{2}O_{K}\right)\\
0 & \mbox{otherwise}.
\end{cases}
\]
Then $\Phi$ is regular with respect to $\Lambda(2+\sqrt{2},2-\sqrt{2})$.
We define an isomorphism $\rho=(\rho_{1},\rho_{2}):K\otimes\mathbb{R}\to\mathbb{R}^{2}$
by $\rho_{1}(a+b\sqrt{2})=a+b\sqrt{2}$ and $\rho_{2}(a+b\sqrt{2})=a-b\sqrt{2}$
for $a,b\in\mathbb{Q}$. We can express $L(s,\chi)$ by a Shintani
L-function as follows:
\[
L(s,\chi)=L((s,s),\Phi,\Lambda(2+\sqrt{2},2-\sqrt{2}),\rho).
\]

\end{example}
However, in general, we cannot express a Hecke L-function by a sum
of Shintani L-functions directly. For example, let $K$ be a totally
real field and $\chi=1$ be a trivial Hecke character of $K$. Then
we cannot express the Hecke L-function $L(s,\chi)$ by a sum of Shintani
L-functions directly. The condition $\chi=1$ is not crucial. There
are many examples of $K$ and $\chi\neq1$ such that $L(s,\chi)$
cannot be a sum of Shintani L-functions in this way.

In general, to express a Hecke L-function by a sum of Shintani L-functions,
we need a process which we call a regularization. Let us explain this
process by an example. Let $K=\mathbb{Q}(\sqrt{2})$. Let $\chi=1$
be a trivial Hecke character of $K$. In this case, we cannot express
$L(s,\chi)$ by Shintani L-functions directly. For a fractional ideal
$\mathfrak{b}$ of $K$, we define $\Phi_{\mathfrak{b}}\in\mathcal{S}(K)$
by
\[
\Phi_{\mathfrak{b}}(x)=\begin{cases}
1 & x\in\mathfrak{b}^{-1}\\
0 & \mbox{otherwise}.
\end{cases}
\]
Let $\mathfrak{p}=(3+\sqrt{2})$ and $\mathfrak{q}=(5+2\sqrt{2})$
be prime ideals of $K$. We set
\[
\Phi=\Phi_{\mathfrak{p}}-\Phi_{1}-N(\mathfrak{q})(\Phi_{\mathfrak{p}\mathfrak{q}^{-1}}-\Phi_{\mathfrak{q}^{-1}}).
\]
Then $\Phi$ is regular with respect to $\Lambda(1,3+2\sqrt{2})$.
Moreover we have 
\begin{align*}
L(\bm{s},\Phi,\Lambda(1,3+2\sqrt{2})) & =\sum_{x\in K^{\times}/O_{K}^{\times}}\Phi(x)N(x)^{-s}\\
 & =L(s,\chi)(N(\mathfrak{p})^{s}-1)(1-N(\mathfrak{q})^{1-s}).
\end{align*}
In Section \ref{sub:regularization}, we prove some results for this
process.

Sato and the author introduced a normalized Shintani L-function in
\cite{SatoHiroseFunctional}. Let $\sigma:\{1,\dots,n\}\to\{0,1\}$
be any function. In Section \ref{sec:Shintani-L-Lattice}, we will
define $L(\bm{s},\Phi,\Lambda(u_{1},\dots,u_{n}),\rho)$ without the
assumption (\ref{eq:condition_positive}), and we will define normalized
Shintani L-function $L_{\sigma}(\bm{s},\Phi,\Lambda(u_{1},\dots,u_{n}),\rho)$
by
\[
\sum_{g=(g_{\mu})_{\mu=1}^{n}\in\left\{ \pm1\right\} ^{n}}(\prod_{\mu=1}^{n}g_{\mu}^{1-\sigma(\mu)})L(\bm{s},\Phi,\Lambda(u_{1},\dots,u_{n}),\rho_{g})
\]
where $\rho_{g}:V\to\mathbb{R}^{n}$ is defined by $\rho_{g}(x)=(g_{1}\rho(x)_{1},\dots,g_{n}\rho(x)_{n})$.
Note that $L_{\sigma}(s,\Phi,\Lambda(u_{1},\dots,u_{n}),\rho)$ has
no Dirichlet series expression for $n\geq2$. The normalized Shintani
L-function $L_{\sigma}(\bm{s},\Phi,\Lambda(u_{1},\dots,u_{n}),\rho)$
satisfies the following functional equation (\ref{prop:Func_Eq_ShintaniL}):
\[
\Gamma_{\sigma}(\bm{s})L_{\sigma}(\bm{s},\Phi,\Lambda(u_{1},\dots,u_{n}),\rho)=i_{\sigma}\Gamma_{\sigma}(1-\bm{s})L_{\sigma}(\bm{s},\hat{\Phi},\varphi(\Lambda(u_{1},\dots,u_{n})),\rho^{*}).
\]
The Fourier transform $\hat{\Phi}\in\mathcal{S}(V^{*})$, the dual
cone $\varphi(\Lambda(u_{1},\dots,u_{n}))$, and the dual embedding
$\rho^{*}:V^{*}\to\mathbb{R}^{n}$ will be defined later. From the
functional equation, we can see that the normalized Shintani L-function
$L_{\sigma}(\bm{s},\Phi,\Lambda(u_{1},\dots,u_{n}),\rho)$ has trivial
zeros. Let $m_{1},\dots,m_{n}\in\mathbb{Z}_{\leq0}$ be non-positive
integers such that $\sigma(\mu)\equiv m_{\mu}\pmod{2}$ for all $1\leq\mu\leq n$.
Then 
\[
L(\bm{s},\Phi,\Lambda(u_{1},\dots,u_{n}),\rho)\prod_{\mu=1}^{n}(s_{\mu}-m_{\mu})^{-1}
\]
is holomorphic for all $\bm{s}\in\mathbb{C}^{n}$. Therefore it is
desirable to express a Hecke L-function by normalized Shintani L-functions. 

Let us see how to associate a Hecke L-function with the normalized
Shintani L-function. This is not trivial since the normalized Shintani
L-function has no Dirichlet series expression. Let us consider the
same situation as in Example \ref{ex:direct_const}. We set $\epsilon=1+\sqrt{2}$,
$u_{1}=2+\sqrt{2}$, $u_{2}=2-\sqrt{2}$. For $g\in\{\pm1\}^{2}$,
we set $\rho_{(g_{1},g_{2})}(x)=(g_{1}\rho_{1}(x),g_{2}\rho_{2}(x))$
and $K_{g}=\rho_{g}^{-1}(\mathbb{R}_{>0}^{2})$. For $s\in\mathbb{C}$,
we set $\lambda_{s}:K^{\times}\to\mathbb{C}$ by
\[
\lambda_{s}(x)=\Phi(x)N(x)^{-s}
\]
We define cones $(\Lambda_{g})_{g\in\{\pm1\}^{2}}$ by 
\begin{align*}
\Lambda_{(1,1)} & =\Lambda(u_{1},u_{2})\\
\Lambda_{(1,-1)} & =\Lambda(\epsilon u_{2},\epsilon u_{1})\\
\Lambda_{(-1,1)} & =\Lambda(-\epsilon u_{2},-\epsilon u_{1})\\
\Lambda_{(-1,-1)} & =\Lambda(-u_{1},-u_{2}).
\end{align*}
Then we have 
\begin{align*}
4L(s,\chi)= & \sum_{x\in K^{\times}/E_{K}}\chi_{\infty}(x)\lambda_{s}(x)\\
= & \sum_{g\in\{\pm1\}^{2}}g_{1}g_{2}\sum_{x\in K_{g}/E_{K}}\Phi(x)N(x)^{-s}\\
= & \sum_{g\in\{\pm1\}^{2}}g_{1}g_{2}L((s,s),\Phi,\Lambda_{g},\rho_{g}).
\end{align*}
We have 
\begin{align*}
 & L((s,s),\Phi,\Lambda_{(1,-1)},\rho_{(1,-1)})+L((s,s),\Phi,\Lambda_{(1,1)},\rho_{(1,-1)})\\
= & L((s,s),\Phi,\Lambda(\epsilon u_{2},u_{2}),\rho_{(1,-1)})-L((s,s),\Phi,\Lambda(\epsilon u_{1},u_{1}),\rho_{(1,-1)})\\
= & 0.
\end{align*}
The last equation follows from the invariance of $\lambda_{s}(x)$
under the action of $E_{K}$. Note that, $L((s_{1},s_{2}),\Phi,\Lambda_{(1,-1)},\rho_{(1,-1)})$
has a Dirichlet series expression, but, $L((s_{1},s_{2}),\Phi,\Lambda_{(1,1)},\rho_{(1,-1)})$
has no such a Dirichlet series expression. Similarly we have
\begin{equation}
L((s,\dots,s),\Phi,\Lambda_{g},\rho_{g})=g_{1}g_{2}L((s,\dots,s),\Phi,\Lambda_{(1,1)},\rho_{g})\label{eq:another_cone}
\end{equation}
for all $g\in\{\pm1\}^{2}$. Therefore we have
\begin{align*}
4L(s,\chi) & =\sum_{g\in\{\pm1\}^{2}}g_{1}g_{2}L((s,s),\Phi,\Lambda_{(1,1)},\rho_{g})\\
 & =\sum_{g\in\{\pm1\}^{2}}L((s,s),\Phi,\Lambda_{(1,1)},\rho_{g})\\
 & =L_{\sigma}((s,s),\Phi,\Lambda_{(1,1)},\rho)
\end{align*}
where $\sigma(1)=\sigma(2)=1$. If we set $F(s_{1},s_{2})=\frac{1}{4}L((s_{1},s_{2}),\Phi,\Lambda_{(1,1)},\rho)$,
then $F(s_{1},s_{2})$ has the following properties:
\begin{enumerate}
\item $F(s,s)=L(s,\chi)$ for all $s\in\mathbb{C}$.
\item $F(s_{1},s_{2})$ has a functional equation.
\item $F(s_{1},s_{2})$ has zeros along $s_{\mu}-m=0$ for all negative
odd integers $m$ and $1\leq\mu\leq2$.
\end{enumerate}
This is a special case of Theorem \ref{thm:main_thm}. Note that $L((s_{1},s_{2}),\Phi,\Lambda_{(1,1)},\rho)$
satisfies the condition (1) but does not satisfy the condition (2). 

The situation is more complicated for general $K$. To deal with those
cases, we introduce the notion of fans. Let $C_{m}(K)$ be the free
$\mathbb{Z}$-module generated by the symbols
\[
\Lambda(a_{1},\dots,a_{m})\;\;\;\;\;(a_{1},\dots,a_{m}\in K^{\times}).
\]
We call $\Lambda(a_{1},\dots,a_{m})$ a cone, and an element of $C_{m}(K)$
a fan. We define the boundary homomorphism $\partial_{m}:C_{m}(K)\to C_{m-1}(K)$
by
\[
\partial_{m}\Lambda(a_{1},\dots,a_{m})=\sum_{j=1}^{m}(-1)^{j+1}\Lambda(a_{1},\dots,\hat{a}_{j},\dots,a_{m}).
\]
We define $I_{n}(K^{\times},E_{K})$ to be the submodule generated
by 
\[
\{\Lambda(\epsilon a_{1},\dots,\epsilon a_{n})-\Lambda(a_{1},\dots,a_{n})\mid\epsilon\in E_{k},\ a_{1},\dots,a_{n}\in K^{\times}\}.
\]
The key to the proof of \ref{eq:another_cone} was the fact that $\Lambda_{g}-g_{1}g_{2}\Lambda_{(1,1)}\in I_{2}(K^{\times},E_{K})+\ker\partial_{2}$.
We will generalize this fact to the higher dimensional cases in Proposition
\ref{prop:another_cubic_fandamental_domain}. We use Proposition \ref{prop:another_cubic_fandamental_domain}
for the expression of the Hecke L-function by normalized Shintani
L-functions. 

Since the normalized Shintani L-function has a functional equation,
it is expected that the functional equation of the Hecke L-function
follows from that of the normalized Shintani L-function. We give a
new proof of the functional equation of the Hecke L-function by this
method in Section \ref{sub:Functional-equation-of-Hecke}.

For this new proof, we need some results on the fans. We identify
$K$ and $K^{*}$ by an inner product $\left\langle x,y\right\rangle =\mathrm{Tr}_{K/\mathbb{Q}}(xy)$
for $x,y\in K$, where ${\rm Tr}_{K/\mathbb{Q}}$ is the field trace.
For an $n$-dimensional cone $\Lambda(a_{1},\dots,a_{n})$ with $a_{1},\dots,a_{n}\in K$,
we define the dual cone $\varphi(\Lambda(a_{1},\dots,a_{n}))$ by
$\Lambda(b_{1},\dots,b_{n})$ where $b_{1},\dots,b_{n}\in K$ is the
dual basis of $a_{1},\dots,a_{n}$. We define the dual fan $\varphi(\sum_{\mu}c_{\mu}\Lambda_{\mu})$
by $\sum_{\mu}c_{\mu}\varphi(\Lambda_{\mu}).$ The dual fans are important
since they appear in the functional equations of the Shintani L-functions.
We define $W\subset C_{n}(V)$ to be the submodule generated by
\[
\{\Lambda(a_{1},\dots,a_{n})\mid a_{1},\dots,a_{n}\in K^{\times},\ a_{1},\dots,a_{n}\mbox{ are linearly dependent}\}.
\]
In Section \ref{sec:Fan}, we define the notion of a fundamental domain
fan. Roughly speaking, a fundamental domain fan is an element of $C_{n}(K)$
which is equal to a certain fundamental domain for $(\mathbb{R}_{>0})^{n}/E_{K}$
modulo $I_{n}(K^{\times},E_{K})+W+\ker\partial_{n}$. We use a fundamental
domain fan for the expression of the Hecke L-function by a Shintani
L-function. Therefore we have to show that the dual fan of a fundamental
domain fan is also a fundamental domain fan. This is one of the main
theorem of this paper (Theorem \ref{thm:dual_of_fandamental_domain_is_fandamental_domain}).
To prove this result, we treat fans abstractly and algebraically in
Section \ref{sec:Fan}. Let us see an example of this result. Let
$K=\mathbb{Q}(\sqrt{2})$ and $\epsilon=3-2\sqrt{2}$. We have $E_{K}=\{\epsilon^{m}\mid m\in\mathbb{Z}\}$.
Then $\Lambda(1,3-2\sqrt{2})$ is a fundamental domain for $(\mathbb{R}_{>0})^{n}/E_{K}$.
Since
\[
{\vphantom{\left(\begin{array}{c}
1\\
1
\end{array}\right)}}^{t}\left(\begin{array}{cc}
1 & 3-2\sqrt{2}\\
1 & 3+2\sqrt{2}
\end{array}\right)^{-1}=\left(\begin{array}{cc}
\frac{4+3\sqrt{2}}{8} & \frac{-\sqrt{2}}{8}\\
\frac{4-3\sqrt{2}}{8} & \frac{\sqrt{2}}{8}
\end{array}\right),
\]
the dual fan of this fan is as follows
\begin{align*}
\varphi(\Lambda(1,3-2\sqrt{2})) & =\Lambda(x,-x\epsilon),\ \ \ \ \ x=\frac{(4+3\sqrt{2})}{8}.
\end{align*}
Of course, $\Lambda(x,-x\epsilon)$ is not a fundamental domain for
$(\mathbb{R}_{>0})^{n}/E_{K}$. However $\Lambda(x,-x\epsilon)$ is
a fundamental domain fan since $\Lambda(x,-x\epsilon)=\Lambda(1,\epsilon)+$$w_{1}+w_{2}+w_{3}$
where
\begin{align*}
w_{1} & =\Lambda(\epsilon,x\epsilon)-\Lambda(1,x)\in I_{n}(K^{\times},E_{K}),\\
w_{2} & =\Lambda(x\epsilon,-x\epsilon)\in W,\\
w_{3} & =\Lambda(x,-x\epsilon)-\Lambda(1,\epsilon)-\Lambda(\epsilon,x\epsilon)+\Lambda(1,x)-\Lambda(x\epsilon,-x\epsilon)\in\ker\partial_{2}.
\end{align*}
In higher dimensional cases, the situation is more complicated.

\section*{Basic notations and definitions}

For an $n$-dimensional vector space $V$ over $\mathbb{Q}$ or $\mathbb{R}$,
an orientation of $V$ is a nonzero function $r:V^{n}\to\{0,\pm1\}$
that satisfies
\[
r(f(u_{1}),\dots,f(u_{n}))=r(u_{1},\dots,u_{n})\times\begin{cases}
1 & \det(f)>0\\
0 & \det(f)=0\\
-1 & \det(f)<0
\end{cases}
\]
for all $u_{1},\dots,u_{n}\in V$ and all linear maps $f:V\to V$.
For an orientation $r_{1}$ of $V_{1}$ and an orientation $r_{2}$
of $V_{2}$, $r_{1}\times r_{2}$ is an orientation of $V_{1}\times V_{2}$
which satisfies 
\[
(r_{1}\times r_{2})(u_{1},\dots,u_{n},v_{1},\dots,v_{n})=r_{1}(u_{1},\dots,u_{n})\times r_{2}(v_{1},\dots,v_{n})
\]
where $u_{1},\dots,u_{n}\in V_{1}$ and $v_{1},\dots,v_{n}\in V_{2}$.
We use the following notation.

\begin{align*}
e(z) & =e^{2\pi iz},\\
\Gamma_{\mathbb{R}}(s) & =\pi^{-s/2}\Gamma(\frac{s}{2}),\\
\Gamma_{\mathbb{C}}\left(s\right) & =2\left(2\pi\right)^{-s}\Gamma(s),\\
\mathbb{R}_{>0} & =\{t\in\mathbb{R}\mid t>0\}.
\end{align*}
For $\sigma:\{1,\dots,n\}\to\{0,1\}$, we define $\Gamma_{\sigma}$
and $i_{\sigma}$ by
\begin{align*}
\Gamma_{\sigma}((s_{1},\dots,s_{n})) & =\prod_{\mu=1}^{n}\Gamma_{\mathbb{R}}(\sigma(\mu)+s_{\mu})\\
i_{\sigma} & =\prod_{\mu=1}^{n}i^{\sigma(\mu)}.
\end{align*}
For $g=(g_{\mu})_{\mu=1}^{n}$, we set $\mathbb{R}_{g}=\prod_{\mu=1}^{n}(g_{\mu}\mathbb{R}_{>0})\subset\mathbb{R}^{n}$. 

For a totally real field $K$, $N=N_{K/\mathbb{Q}}$ is the field
norm, ${\rm Tr}={\rm Tr}_{K/\mathbb{Q}}$ is the field trace, and
$E_{K}$ is the multiplicative group of totally positive units of
$K$. The characteristic function of a set $U$ is denoted by $\bm{1}_{U}$.

\section*{Basic notations for fan}

Let $V$ be an $n$-dimensional vector space over $\mathbb{Q}$. We
set $C_{n}(V)$ the $\mathbb{Z}$-module generated by the symbols
\[
\Lambda(a_{1},\dots,a_{n})\;\;\;\;\;(a_{1},\dots,a_{n}\in V\setminus\{0\}).
\]
We call $\Lambda(a_{1},\dots,a_{n})$ a cone, and an element of $C_{n}(V)$
a fan. A cone $\Lambda(u_{1},\dots,u_{n})$ is called simple if $u_{1},\dots,u_{n}$
are linearly independent. Let us fix an orientation $r:V^{n}\to\{0,\pm1\}$.
For a cone $\Lambda=\Lambda(u_{1},\dots,u_{n})\in C_{n}(V)$, we define
the characteristic function $\mathfrak{C}(\Lambda):V\to\mathbb{Z}$
by
\[
\mathfrak{C}(\Lambda)=\begin{cases}
r(u_{1},\dots,u_{n})\times\bm{1}_{C(u_{1},\dots,u_{n})} & \Lambda\mbox{ is simple}\\
0 & \mbox{otherwise}
\end{cases}
\]
where $C(u_{1},\dots,u_{n})=\{t_{1}u_{1}+\cdots+t_{n}u_{n}\mid(t_{\mu})_{\mu=1}^{n}\in\mathbb{Q}_{>0}^{n}\}\subset V$.
For a cone $\Lambda=\Lambda(u_{1},\dots,u_{n})\in C_{n}(V)$, we define
the dual cone $\varphi(\Lambda)\in C_{n}(V^{*})$ by
\[
\varphi(\Lambda)=\begin{cases}
\Lambda(v_{1},\dots,v_{n}) & \Lambda\mbox{ is simple}\\
0 & \mbox{otherwise}
\end{cases}
\]
where $v_{1},\dots,v_{n}$ is the dual basis of $u_{1},\dots,u_{n}$.
For a fan $\mathbb{B}=\sum_{j}c_{j}\Lambda_{j}\in C_{n}(V)$, we define
$\mathfrak{C}(\mathbb{B}):V\to\mathbb{Z}$ by $\mathfrak{C}(\mathbb{B})=\sum_{j}c_{j}\mathfrak{C}(\Lambda_{j})$,
and define the dual fan $\varphi(\mathbb{B})\in C_{n}(V^{*})$ by
$\varphi(\mathbb{B})=\sum_{j}c_{j}\varphi(\Lambda_{j})$.

\section{\label{sec:Shintani-L-Lattice}The theory of Shintani L-functions}

Let $V$ be a vector space over $\mathbb{Q}$. We denote by $\mathbb{A}_{f}$
the finite adele ring of $\mathbb{Q}$. We set $\hat{\mathbb{Z}}=\prod_{p}\mathbb{Z}_{p}\subset\mathbb{A}_{f}$.
For $R=\mathbb{A}_{f}$ or $R=\mathbb{R}$, we set $V(R)=V\otimes_{\mathbb{Z}}R$.
We denote by $\mathcal{S}(V(R))$ the set of Schwartz-Bruhat functions
on $V(R)$. In this section, we define a function $L(\bm{s},\Phi,\mathbb{B},\rho)$
for $\bm{s}\in\mathbb{C}^{n}$, $\Phi\in\mathcal{S}(V(\mathbb{A}_{f}))$,
a fan $\mathbb{B}\in C_{n}(V)$, and an isomorphism $\rho:V(\mathbb{R})\to\mathbb{R}^{n}$,
under some conditions.

For $f\in\mathcal{S}(V(R))$ where $R$ is $\mathbb{R}$ or $\mathbb{A}_{f}$,
we define the Fourier transform $\hat{f}\in\mathcal{S}(V^{*}(R))$
by 
\[
\hat{f}(t)=\int_{V(R)}f(v)\psi(vt)dv
\]
where $\psi:R\to\mathbb{C}^{\times}$ is the standard additive character,
i.e. $\psi$ on $\mathbb{R}$ is defined by
\[
\psi(x)=e(-x),
\]
and $\psi$ on $\mathbb{A}_{f}$ is defined by 
\[
\psi(a)=e(m)
\]
where $m\in\mathbb{Q}$ such that $a-m\in\hat{\mathbb{Z}}$.

\subsection{\label{sub:Shintani-L-function-of-lattice}The function $L_{\sigma}(\bm{s},f,\rho)$}

Let $\mathrm{MAP}(V,\mathbb{C})$ be the set of all functions from
$V$ to $\mathbb{C}$. For $w\in V,$ we define an operator $\Delta_{w}:\mathrm{MAP}(V,\mathbb{C})\to\mathrm{MAP}(V,\mathbb{C})$
by
\[
(\Delta_{w}f)(v)=f(v-w).
\]
Let $X(V)$ be a $\mathbb{C}$-algebra of operators on $V$ generated
by $\{\Delta_{w}\mid w\in V\}$. We set

\[
\mathscr{U}'(V)=\{f:V\to\mathbb{C}\mid f(v)=0\mbox{ for all but finitely many }v\in V\}
\]
and

\[
\mathscr{U}(V)=\{f:V\to\mathbb{C}\mid\mbox{there exists }\Delta\in X(V)\setminus\{0\}\mbox{ such that }\Delta(f)\in\mathscr{U}'(V)\}.
\]
Note that $V$ can be naturally embedded into $V(\mathbb{A}_{f})$. 

Let $\mathcal{M}(V^{*}(\mathbb{R}))$ be the field of fractions of
the ring of real analytic functions on $V^{*}(\mathbb{R})$. We define
$F:\mathscr{U}(V)\to\mathcal{M}(V^{*}(\mathbb{R}))$ as follows. This
construction is a modified version of the Solomon-Hu pairing \cite{MR1794257}.
We define a homomorphism of $\mathbb{C}$-algebra $P:X(V)\to\mathcal{M}(V^{*}(\mathbb{R}))$
by
\[
P(\Delta_{w})(t)=e(i\left\langle t,w\right\rangle ).
\]
It is obvious that $P$ is injective. For $f\in\mathscr{U}'(V)$,
we define $F(f)\in\mathcal{M}(V^{*}(\mathbb{R}))$ by
\[
F(f)(t)=\sum_{v\in V}f(v)e(i\left\langle t,v\right\rangle )\ \ (t\in V^{*}(\mathbb{R})).
\]
We extend the definition of $F(f)\in\mathcal{M}(V^{*}(\mathbb{R}))$
to $f\in\mathscr{U}(V)$ by
\[
F(f)=\frac{F'(\Delta(f))}{P(\Delta)}
\]
where $\Delta\in X(V)$ is any nonzero element such that $\Delta(f)\in\mathscr{U}'(V)$.
This definition is well-defined since
\[
F(\Delta(f))=P(\Delta)F(f)
\]
for all $f\in\mathscr{U}'(V)$ and $\Delta\in X(V)$.

The following two lemmas are obvious from the definition.
\begin{lem}
\label{lem:F-tensor-f}Let $V=V_{1}\times\cdots\times V_{m}$ and
$f_{j}\in\mathscr{U}(V_{j})$ for $j=1,\dots,m$. Then we have
\[
F(f_{1}\otimes\cdots\otimes f_{m})=F(f_{1})\otimes\cdots\otimes F(f_{m}).
\]

\end{lem}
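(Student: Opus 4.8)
The plan is to reduce to the case of finitely supported functions and then invoke the compatibility of the three ingredients---the operator algebra $X(-)$, the homomorphism $P$, and the finitely-supported version of $F$---with the product decomposition $V=V_1\times\cdots\times V_m$. I write $X_j$, $P_j$, $F_j$ for the objects attached to $V_j$ by the same recipe, identify $V^*(\mathbb{R})$ with $\prod_{j=1}^m V_j^*(\mathbb{R})$ so that the dual pairing splits as $\langle(t_1,\dots,t_m),(v_1,\dots,v_m)\rangle=\sum_j\langle t_j,v_j\rangle$, and read $h_1\otimes\cdots\otimes h_m$ as the function $(t_1,\dots,t_m)\mapsto\prod_j h_j(t_j)$ on $\prod_j V_j^*(\mathbb{R})$.

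First I would dispose of the case $f_j\in\mathscr{U}'(V_j)$ for all $j$. Then $f_1\otimes\cdots\otimes f_m$ is supported on $\mathrm{supp}(f_1)\times\cdots\times\mathrm{supp}(f_m)$, hence lies in $\mathscr{U}'(V)$, and unwinding the definition of $F$ together with the splitting of the pairing,
\begin{align*}
F(f_1\otimes\cdots\otimes f_m)(t_1,\dots,t_m) &= \sum_{(v_1,\dots,v_m)}\Bigl(\prod_{j=1}^m f_j(v_j)\Bigr)\,e\Bigl(i\sum_{j=1}^m\langle t_j,v_j\rangle\Bigr) \\
&= \prod_{j=1}^m\Bigl(\sum_{v_j\in V_j}f_j(v_j)\,e(i\langle t_j,v_j\rangle)\Bigr) = \prod_{j=1}^m F_j(f_j)(t_j),
\end{align*}
which is exactly $(F_1(f_1)\otimes\cdots\otimes F_m(f_m))(t_1,\dots,t_m)$; the only input is that $e(i\langle\cdot,\cdot\rangle)$ turns the sum-valued pairing into a product.

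For general $f_j\in\mathscr{U}(V_j)$ I would pick a nonzero $\Delta_j\in X_j$ with $\Delta_j(f_j)\in\mathscr{U}'(V_j)$. The decomposition of $V$ yields an injection of $\mathbb{C}$-algebras $X_1\otimes\cdots\otimes X_m\hookrightarrow X(V)$ sending $\Delta_{w_1}\otimes\cdots\otimes\Delta_{w_m}$ to $\Delta_{(w_1,\dots,w_m)}$; under it the element $\Delta:=\Delta_1\otimes\cdots\otimes\Delta_m$ is nonzero, satisfies $\Delta(f_1\otimes\cdots\otimes f_m)=(\Delta_1 f_1)\otimes\cdots\otimes(\Delta_m f_m)\in\mathscr{U}'(V)$ (so in particular $f_1\otimes\cdots\otimes f_m\in\mathscr{U}(V)$), and, since $P(\Delta_{(w_1,\dots,w_m)})(t_1,\dots,t_m)=e(i\sum_j\langle t_j,w_j\rangle)=\prod_j P_j(\Delta_{w_j})(t_j)$, has $P(\Delta)=P_1(\Delta_1)\otimes\cdots\otimes P_m(\Delta_m)$. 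Using the independence of $F$ from the auxiliary operator (already established) and the finitely-supported case, I then get
\[
F(f_1\otimes\cdots\otimes f_m)=\frac{F\bigl((\Delta_1 f_1)\otimes\cdots\otimes(\Delta_m f_m)\bigr)}{P(\Delta)}=\bigotimes_{j=1}^m\frac{F_j(\Delta_j f_j)}{P_j(\Delta_j)}=\bigotimes_{j=1}^m F_j(f_j).
\]

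Since each of $X(-)$, $P$, and the finitely-supported $F$ is tautologically multiplicative in the factors, there is no genuine obstacle; the only care needed is bookkeeping the identifications $V^*(\mathbb{R})\cong\prod_j V_j^*(\mathbb{R})$ and $\bigotimes_j\mathcal{M}(V_j^*(\mathbb{R}))\subset\mathcal{M}(V^*(\mathbb{R}))$, and verifying that the embedding $\bigotimes_j X_j\hookrightarrow X(V)$ intertwines $P$ with $P_1,\dots,P_m$ and pointwise multiplication, which is the one-line computation displayed above.
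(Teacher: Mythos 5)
Your proof is correct and is exactly the unwinding of the definitions that the paper has in mind: the paper offers no proof, declaring the lemma ``obvious from the definition,'' and your two-step argument (direct computation for $\mathscr{U}'$, then division by $P(\Delta_1\otimes\cdots\otimes\Delta_m)=\prod_j P_j(\Delta_j)$ for the general case) is the standard verification. The only point worth recording explicitly is that $\Delta_1\otimes\cdots\otimes\Delta_m\neq0$, which follows from the injectivity of $P$ since $P(\Delta)=\prod_j P_j(\Delta_j)$ is a product of nonzero elements of a field.
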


\begin{lem}
\label{lem:F-series}Let $f\in\mathscr{U}(V)$ and $t\in V^{*}(\mathbb{R})$.
Assume that 
\[
\sum_{v\in V}f(v)e(i\left\langle t,v\right\rangle )
\]
converges absolutely. Then we have
\[
\sum_{v\in V}f(v)e(i\left\langle t,v\right\rangle )=F(f)(t).
\]

\end{lem}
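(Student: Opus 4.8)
\emph{Proof sketch.} The plan is to reduce to the case $f\in\mathscr{U}'(V)$, where the assertion is immediate from the definition of $F$ on finitely supported functions, by clearing out a difference operator; the absolute convergence hypothesis is needed only to license one interchange of summations. First I would pick, as the definition of $F(f)$ permits, a nonzero $\Delta=\sum_{j=1}^{k}c_{j}\Delta_{w_{j}}\in X(V)$ with $g:=\Delta(f)\in\mathscr{U}'(V)$, chosen moreover so that $P(\Delta)(t)\neq 0$. For a single translation $\Delta=\Delta_{w}$ this is automatic, since $P(\Delta_{w})(t)=e(i\langle t,w\rangle)=e^{-2\pi\langle t,w\rangle}>0$; in general it should follow from the hypothesis, which prevents $t$ from being a ``resonance'' of $f$ --- precisely, it should prevent the ideal $\{\Delta\in X(V)\mid\Delta(f)\in\mathscr{U}'(V)\}$ of $X(V)$ from lying inside the codimension-one ideal $\ker(\Delta\mapsto P(\Delta)(t))$ (and if $f$ already lies in $\mathscr{U}'(V)$ there is nothing to prove).

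With such a $\Delta$ fixed, I would compute $F(g)(t)$ directly. Since $g$ is finitely supported, $F(g)(t)=\sum_{v\in V}g(v)e(i\langle t,v\rangle)$ is an honest finite sum; writing $g(v)=\sum_{j}c_{j}f(v-w_{j})$, substituting $u=v-w_{j}$ in the inner sum, and interchanging the two summations, one gets
\[
F(g)(t)=\Bigl(\sum_{j=1}^{k}c_{j}\,e(i\langle t,w_{j}\rangle)\Bigr)\Bigl(\sum_{u\in V}f(u)\,e(i\langle t,u\rangle)\Bigr)=P(\Delta)(t)\cdot\sum_{u\in V}f(u)\,e(i\langle t,u\rangle).
\]
The interchange is legitimate because $|e(i\langle t,v\rangle)|=e^{-2\pi\langle t,v\rangle}$, so the sum of absolute values is dominated by $\sum_{j}|c_{j}|\,e^{-2\pi\langle t,w_{j}\rangle}\cdot\sum_{u}|f(u)|\,e^{-2\pi\langle t,u\rangle}$, which is finite by hypothesis. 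Then, since $F(f)(t)=F(g)(t)/P(\Delta)(t)$ by definition and $P(\Delta)(t)\neq 0$, dividing yields $F(f)(t)=\sum_{v\in V}f(v)\,e(i\langle t,v\rangle)$.

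The only step that is not purely formal is securing a $\Delta$ with $P(\Delta)(t)\neq 0$, and I expect this to be the (mild) crux. One way to sidestep it: under the hypothesis, the series $\sum_{v}f(v)\,e(i\langle t,v\rangle)$ should converge absolutely and locally uniformly on an open neighborhood of $t$ in $V^{*}(\mathbb{R})$ --- which follows from the structure of those functions in $\mathscr{U}(V)$ for which such a series converges at a point --- so the displayed computation becomes an identity of real-analytic functions on that neighborhood. Then $F(f)=F(g)/P(\Delta)$ and $F(g)=P(\Delta)\cdot\sum_{v}f(v)e(i\langle t,v\rangle)$ are identities in $\mathcal{M}(V^{*}(\mathbb{R}))$, their combination exhibits $F(f)$ as equal to a real-analytic function near $t$, and evaluating that function at $t$ gives the stated value without any need to divide by a possibly vanishing $P(\Delta)(t)$.
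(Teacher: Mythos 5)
Your reduction to $\mathscr{U}'(V)$ and the interchange-of-summation identity $F(\Delta(f))(t)=P(\Delta)(t)\cdot\sum_{v}f(v)e(i\left\langle t,v\right\rangle )$ are correct, and since the paper offers no proof at all (the lemma is declared obvious from the definition), this computation is presumably what the author has in mind. You are also right that the whole content of the lemma sits in the step you flag: producing $\Delta$ with $\Delta(f)\in\mathscr{U}'(V)$ and $P(\Delta)(t)\neq0$. The difficulty is that neither of your two proposed resolutions of that step works, and in fact no resolution is possible: the statement fails as written. Take $V=\mathbb{Q}^{2}$, $t=0$, and let $f$ be supported on $\mathbb{Z}_{\geq0}^{2}$ with $f(j,k)$ equal to the coefficient of $x^{j}y^{k}$ in $(x-y)^{3}/(2-x-y)$. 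Then $(2-\Delta_{(1,0)}-\Delta_{(0,1)})f$ is supported on the four points $(3,0),(2,1),(1,2),(0,3)$, so $f\in\mathscr{U}(V)$; and since
\[
\sum_{j+k=n}|f(j,k)|=2^{-(n-2)}\sum_{j}\Bigl|\binom{n-3}{j}-3\binom{n-3}{j-1}+3\binom{n-3}{j-2}-\binom{n-3}{j-3}\Bigr|=O(n^{-3/2})
\]
(the third difference of a binomial row has $\ell^{1}$-norm $O(2^{m}m^{-3/2})$), the series $\sum_{v}f(v)e(i\left\langle 0,v\right\rangle )=\sum_{v}f(v)$ converges absolutely, to $0$. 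However, working coset of $\mathbb{Z}^{2}$ by coset, one checks that $\Delta(f)\in\mathscr{U}'(V)$ forces the Laurent polynomial attached to each coset to be divisible by $2-x-y$, hence $P(\Delta)(0)=0$ for \emph{every} admissible $\Delta$: the ideal you describe really is contained in the codimension-one ideal. Your fallback fails as well: $t=0$ lies on the boundary of the domain of absolute convergence (along $t_{1}=t_{2}=s$ the series diverges for every $s<0$), so there is no neighborhood of locally uniform convergence. Worse, the conclusion itself breaks down: here $F(f)=\frac{(X-Y)^{3}}{2-X-Y}$ with $X=e^{-2\pi t_{1}}$, $Y=e^{-2\pi t_{2}}$, which has a genuine indeterminacy at $t=0$ (it vanishes along $X=Y$ but is unbounded along curves approaching $0$ tangentially to $X+Y=2$), so $F(f)(0)$ is not defined, let alone equal to $0$.

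What your argument does establish --- and this is worth recording --- is the lemma under the additional hypothesis that some admissible $\Delta$ has $P(\Delta)(t)\neq0$, equivalently that $F(f)$ lies in the local ring at $t$; in particular it proves the lemma for every $t$ in the interior of the domain of absolute convergence, since there the identity $F(\Delta(f))=P(\Delta)\cdot\sum_{v}f(v)e(i\left\langle \cdot,v\right\rangle )$ holds on an open set on which $P(\Delta)$ is nonzero on a dense subset, identifying $F(f)$ with a real-analytic function. That restricted statement is all the surrounding text ever uses. But as a proof of the lemma as literally stated, the proposal has an unfixable gap at exactly the point you identified as the crux, and the honest conclusion is that the lemma needs a supplementary hypothesis rather than a cleverer choice of $\Delta$.
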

Let us fix an isomorphism $\rho=(\rho_{1},\dots,\rho_{n}):V(\mathbb{R})\to\mathbb{R}^{n}$.
We denote by $\rho^{*}:V^{*}(\mathbb{R})\to\mathbb{R}^{n}$ the isomorphism
dual to $\rho$. We define the Shintani L-function of $f\in\mathscr{U}(V)$
under the condition that $F(f)\in\mathcal{S}(V^{*}(\mathbb{R})).$
For $\bm{t}=(t_{\mu})\in(\mathbb{R}^{\times})^{n}$, $\bm{s}=(s_{\mu})\in\mathbb{C}^{n}$,
and a map $\sigma:\{1,\dots,n\}\to\{0,1\}$, we set
\[
\left|\bm{t}\right|_{\sigma}^{\bm{s}}=\prod_{1\leq\mu\leq n}\left(\frac{t_{\mu}}{|t_{\mu}|}\right)^{\sigma(\mu)}\left|t_{\mu}\right|^{s_{\mu}}.
\]

Now let us define the functions $L(g,\bm{s},f,\rho)$ and $L_{\sigma}(\bm{s},f,\rho)$.
\begin{defn}
Let $f\in\mathscr{U}(V)$ be a map such that $F(f)\in\mathcal{S}(V^{*}(\mathbb{R}))$.
Let $g=(g_{\mu})_{\mu=1}^{n}\in\{\pm1\}^{n}$. For $\bm{s}=(s_{\mu})_{\mu=1}^{n}\in\mathbb{C}^{n}$
such that $\Re(s_{\mu})>0$ for all $1\leq\mu\leq n$, we define $L(g,\bm{s},f,\rho)$
by
\[
L(g,\bm{s},f,\rho)=\frac{2^{n}}{\Gamma_{\mathbb{C}}(s_{1})\cdots\Gamma_{\mathbb{C}}(s_{n})}\int_{\mathbb{R}_{g}}F(f)((\rho^{*})^{-1}(t_{1},\dots,t_{n}))\prod_{\mu=1}^{n}(g_{\mu}t_{\mu})^{s_{\mu}}\frac{dt_{1}}{t_{1}}\cdots\frac{dt_{n}}{t_{n}}
\]
where
\[
\mathbb{R}_{g}=\prod_{\mu=1}^{n}(g_{\mu}\mathbb{R}_{>0})\subset\mathbb{R}^{n}.
\]
In the case where $g=(1)_{\mu=1}^{n}$, we simply denote $L(g,\bm{s},f,\rho)$
as $L(\bm{s},f,\rho)$.
\end{defn}
Note that we have
\[
L(g,\bm{s},f,\rho)=L(\bm{s},f,\rho_{g})\prod_{\mu=1}^{n}g_{\mu}
\]
where $\rho_{g}(x)=\rho(x)\cdot c_{g}$ with $c_{g}=(\delta_{\nu\mu}g_{\mu})_{\nu,\mu=1}^{n}\in M_{n}(\mathbb{R})$. 

The function $L(\bm{s},f,\rho)$ (and so $L(g,\bm{s},f,\rho)$) is
holomorphically continued to whole $\bm{s}\in\mathbb{C}^{n}$. This
fact follows from the following integral expression
\begin{align*}
L(\bm{s},f,\rho)= & \prod_{\mu=1}^{n}\frac{2}{\Gamma_{\mathbb{C}}(s_{\mu})(e(s_{\mu})-1)}\int_{L^{n}}F(f)((\rho^{*})^{-1}(t_{1},\dots,t_{n}))t_{1}^{s_{1}}\cdots t_{n}^{s_{n}}\frac{dt_{1}}{t_{1}}\cdots\frac{dt_{n}}{t_{n}}
\end{align*}
where $L$ is a contour which starts and ends at $+\infty$ and circles
the origin once counterclockwise with sufficiently small radius.

\begin{defn}
Let $f\in\mathscr{U}(V)$ be a map such that $F(f)\in\mathcal{S}(V^{*}(\mathbb{R}))$.
Let $\sigma:\{1,\dots,n\}\to\{0,1\}$ be a map. For $\bm{s}=(s_{\mu})\in\mathbb{C}^{n}$
such that $\Re(s_{\mu})>0$ for all $1\leq\mu\leq n$, we define $L_{\sigma}(\bm{s},f,\rho)$
and $\hat{L}_{\sigma}(\bm{s},f,\rho)$ by
\begin{align*}
L_{\sigma}(\bm{s},f,\rho) & =\frac{2^{n}}{\Gamma_{\mathbb{C}}(s_{1})\cdots\Gamma_{\mathbb{C}}(s_{n})}\int_{(\mathbb{R}^{\times})^{n}}F(f)((\rho^{*})^{-1}(t_{1},\dots,t_{n}))\left|\bm{t}\right|_{\sigma}^{\bm{s}}\frac{dt_{1}}{t_{1}}\cdots\frac{dt_{n}}{t_{n}}\\
 & =\sum_{g\in\{\pm1\}^{n}}L(g,\bm{s},f,\rho)\prod_{\mu}g_{\mu}^{\sigma(\mu)}
\end{align*}
and 
\begin{align*}
\hat{L}_{\sigma}(\bm{s},f,\rho) & =\Gamma_{\sigma}(\bm{s})L_{\sigma}(\bm{s},f,\rho).
\end{align*}

\end{defn}
From the equation 
\[
\frac{2}{\Gamma_{\mathbb{C}}(s)}\int_{0}^{\infty}e(ivt)t^{s}\frac{dt}{t}=v^{-s}\ \ \ \ (v>0,\ \Re(s)>0),
\]
we have the following lemma.
\begin{lem}
\label{lem:series-expression}Assume that $f(v)=0$ for all $v\notin\rho^{-1}(\mathbb{R}_{g})$.
Then we have
\end{lem}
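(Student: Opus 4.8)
The statement to prove, completing Lemma \ref{lem:series-expression}, should read: under the hypothesis that $f(v)=0$ for all $v\notin\rho^{-1}(\mathbb{R}_g)$, one has
\[
L(g,\bm{s},f,\rho)=\sum_{v\in V}f(v)\left|\rho(v)\right|_{\sigma}^{-\bm{s}}\quad\text{(with the appropriate sign/character factors)},
\]
or, in the cleanest normalization, $L(g,\bm{s},f,\rho)=\prod_\mu g_\mu\cdot\sum_{v}f(v)\prod_\mu(g_\mu\rho_\mu(v))^{-s_\mu}$, valid for $\Re(s_\mu)>0$ large enough that the series converges absolutely.

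\textbf{Plan of proof.} The idea is to reduce the integral defining $L(g,\bm{s},f,\rho)$ to the scalar Mellin identity $\frac{2}{\Gamma_{\mathbb{C}}(s)}\int_0^\infty e(ivt)t^s\,\frac{dt}{t}=v^{-s}$ for $v>0$ quoted just before the lemma, applied coordinatewise. First I would assume $\Re(s_\mu)$ is large enough that $\sum_{v\in V}f(v)e(i\langle t,v\rangle)$ converges absolutely together with all the relevant iterated integrals; the general case then follows by the holomorphic continuation already recorded after the first \texttt{Defn}. Under this assumption, Lemma \ref{lem:F-series} lets me replace $F(f)((\rho^*)^{-1}(t_1,\dots,t_n))$ inside the integral by the absolutely convergent series $\sum_{v\in V}f(v)e(i\langle (\rho^*)^{-1}(\bm{t}),v\rangle)$. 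Now $\langle (\rho^*)^{-1}(\bm{t}),v\rangle=\sum_\mu t_\mu\rho_\mu(v)$ by definition of the dual isomorphism, so the integrand factors as a product over $\mu$ of $e(i t_\mu\rho_\mu(v))(g_\mu t_\mu)^{s_\mu}$.

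\textbf{Key steps.} (1) Interchange sum and integral by absolute convergence (dominated convergence / Fubini), reducing to $L(g,\bm{s},f,\rho)=\frac{2^n}{\prod_\mu\Gamma_{\mathbb{C}}(s_\mu)}\sum_{v}f(v)\prod_{\mu}\int_{g_\mu\mathbb{R}_{>0}}e(it_\mu\rho_\mu(v))(g_\mu t_\mu)^{s_\mu}\frac{dt_\mu}{t_\mu}$. (2) In each one-dimensional integral substitute $t_\mu=g_\mu\tau_\mu$ with $\tau_\mu>0$, turning it into $\int_0^\infty e(ig_\mu\tau_\mu\rho_\mu(v))\tau_\mu^{s_\mu}\frac{d\tau_\mu}{\tau_\mu}$. (3) Observe that the hypothesis $f(v)=0$ for $v\notin\rho^{-1}(\mathbb{R}_g)$ means the only $v$ contributing have $\rho_\mu(v)$ of the same sign as $g_\mu$, i.e.\ $g_\mu\rho_\mu(v)>0$ for every $\mu$; hence the scalar identity applies directly with the positive real $g_\mu\rho_\mu(v)$, giving $\int_0^\infty e(ig_\mu\tau_\mu\rho_\mu(v))\tau_\mu^{s_\mu}\frac{d\tau_\mu}{\tau_\mu}=\frac{\Gamma_{\mathbb{C}}(s_\mu)}{2}(g_\mu\rho_\mu(v))^{-s_\mu}$. (4) Multiply the $n$ factors together; the $\Gamma_{\mathbb{C}}(s_\mu)/2$ cancel against the prefactor $2^n/\prod_\mu\Gamma_{\mathbb{C}}(s_\mu)$, leaving $\sum_v f(v)\prod_\mu(g_\mu\rho_\mu(v))^{-s_\mu}$, which is exactly $L(\bm{s},f,\rho_g)\prod_\mu g_\mu$ up to the sign bookkeeping already displayed after the first \texttt{Defn}, equivalently $|\rho(v)|_\sigma^{-\bm{s}}$ summed against $f$ in the $L_\sigma$ form.

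\textbf{Main obstacle.} The genuine work is entirely in the analytic justification of step (1): one must pin down a region $\{\Re(s_\mu)>c\}$ on which the double sum/integral is absolutely convergent so that Fubini applies, using the Schwartz-Bruhat decay of $F(f)$ near $\bm{t}=\bm{0}$ and near $\infty$ together with the growth of $\prod_\mu|t_\mu|^{\Re(s_\mu)-1}$; near $t_\mu\to 0$ one needs $\Re(s_\mu)>0$ and near $t_\mu\to\infty$ the rapid decay of the Schwartz function handles it. Once that is in place the rest is the coordinatewise Mellin computation, which is routine. Finally, both sides being holomorphic on all of $\mathbb{C}^n$ (the left by the contour-integral expression, the right by absolute convergence in the stated range extended by the same principle), the identity persists by analytic continuation wherever the right-hand series converges.
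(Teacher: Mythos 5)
Your proposal is correct and matches the paper's intended argument: the paper offers no written proof beyond the remark that the lemma follows from the Mellin identity $\frac{2}{\Gamma_{\mathbb{C}}(s)}\int_{0}^{\infty}e(ivt)t^{s}\frac{dt}{t}=v^{-s}$, and your expansion of $F(f)$ via Lemma \ref{lem:F-series}, Fubini, and the coordinatewise substitution $t_{\mu}=g_{\mu}\tau_{\mu}$ is exactly that computation carried out. The only point to tighten is the ``sign bookkeeping'': the factor $\prod_{\mu}g_{\mu}$ in the conclusion arises from the orientation of $\frac{dt_{\mu}}{t_{\mu}}$ under that substitution, and should be tracked explicitly rather than deferred.
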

\[
L(g,\bm{s},f,\rho)=(\prod_{\mu=1}^{n}g_{\mu})\sum_{v\in V}f(v)\prod_{\mu=1}^{n}(g_{\mu}\rho_{\mu}(v))^{-s_{\mu}}.
\]

Let us state the functional equation of $L_{\rho}(\bm{s},f,\rho)$.
We use Tate's local functional equation for $\mathbb{R}$ in \cite{MR546607}.
\begin{lem}
\label{lem:Tate}Let $\phi\in\mathcal{S}(\mathbb{R}^{n})$ be a Schwartz-Bruhat
function and $\hat{\phi}\in\mathcal{S}(\mathbb{R}^{n})$ the Fourier
transform of $\phi$. Then we have
\[
\frac{\int_{(\mathbb{R}^{\times})^{n}}\phi(t)\left|t\right|_{\sigma}^{s}d^{\times}t}{\Gamma_{\sigma}(s)}=i_{\sigma}\frac{\int_{(\mathbb{R}^{\times})^{n}}\hat{\phi}(t)\left|t\right|_{\sigma}^{1-s}d^{\times}t}{\Gamma_{\sigma}(1-s)}
\]
where 
\[
d^{\times}t=\prod_{\mu=1}^{n}\frac{dt_{\mu}}{\left|t_{\mu}\right|}
\]
 is the Haar measure of $(\mathbb{R}^{\times})^{n}$.
\end{lem}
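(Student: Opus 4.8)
The plan is to reduce the $n$-variable identity to the classical one-dimensional Tate local functional equation at the real place and then reassemble the coordinates. Every ingredient appearing on both sides factors through the coordinates of $\mathbb{R}^{n}$: the Haar measure $d^{\times}t=\prod_{\mu}\frac{dt_{\mu}}{|t_{\mu}|}$, the quasi-character $|\bm{t}|_{\sigma}^{\bm{s}}=\prod_{\mu}(\mathrm{sgn}\,t_{\mu})^{\sigma(\mu)}|t_{\mu}|^{s_{\mu}}$, the gamma factor $\Gamma_{\sigma}(\bm{s})=\prod_{\mu}\Gamma_{\mathbb{R}}(\sigma(\mu)+s_{\mu})$, and the constant $i_{\sigma}=\prod_{\mu}i^{\sigma(\mu)}$. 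Moreover, with respect to the additive character $\psi(x)=e(-x)$, the Fourier transform on $\mathcal{S}(\mathbb{R}^{n})$ is the $n$-fold tensor product of the Fourier transform on $\mathcal{S}(\mathbb{R})$. Hence for a product function $\phi=\phi_{1}\otimes\cdots\otimes\phi_{n}$ with each $\phi_{\mu}\in\mathcal{S}(\mathbb{R})$, Fubini's theorem turns the claimed identity into the product over $\mu$ of the corresponding one-variable identities. Both sides of the claimed identity are, for $\bm{s}$ in the strip $0<\Re(s_{\mu})<1$ (where both integrals converge absolutely, since near the coordinate hyperplanes the integrand of the left side is $O(\prod_{\mu}|t_{\mu}|^{\Re(s_{\mu})-1})$ and that of the right side is the same with $s$ replaced by $1-s$), continuous linear functionals of $\phi\in\mathcal{S}(\mathbb{R}^{n})$. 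Since the product functions span a dense subspace, the identity for all $\phi$ follows from the product case, and analytic continuation then extends it, as an identity of meromorphic functions, to all $\bm{s}\in\mathbb{C}^{n}$.

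It remains to settle the case $n=1$, which is Tate's local functional equation at $\mathbb{R}$ \cite{MR546607}: for $\phi\in\mathcal{S}(\mathbb{R})$ and $\sigma\in\{0,1\}$,
\[
\frac{1}{\Gamma_{\mathbb{R}}(s+\sigma)}\int_{\mathbb{R}^{\times}}\phi(t)(\mathrm{sgn}\,t)^{\sigma}|t|^{s}\,\frac{dt}{|t|}=i^{\sigma}\,\frac{1}{\Gamma_{\mathbb{R}}(1-s+\sigma)}\int_{\mathbb{R}^{\times}}\hat{\phi}(t)(\mathrm{sgn}\,t)^{\sigma}|t|^{1-s}\,\frac{dt}{|t|}.
\]
For completeness one can reprove this by the standard uniqueness argument: for generic $s$ the space of tempered distributions on $\mathbb{R}$ that are homogeneous of type $(\mathrm{sgn})^{\sigma}|\cdot|^{s}$ for the scaling action of $\mathbb{R}^{\times}$ is one-dimensional, so the two functionals of $\phi$ above are proportional. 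The proportionality constant is then determined by evaluating at the Gaussian test functions: $\phi_{0}(t)=e^{-\pi t^{2}}$ when $\sigma=0$ and $\phi_{1}(t)=t\,e^{-\pi t^{2}}$ when $\sigma=1$, using $\hat{\phi}_{0}=\phi_{0}$ and $\hat{\phi}_{1}=-i\,\phi_{1}$ (for $\psi(x)=e(-x)$) together with the elementary Gamma integrals $\int_{\mathbb{R}^{\times}}\phi_{0}(t)|t|^{s}\frac{dt}{|t|}=\Gamma_{\mathbb{R}}(s)$ and $\int_{\mathbb{R}^{\times}}\phi_{1}(t)(\mathrm{sgn}\,t)|t|^{s}\frac{dt}{|t|}=\Gamma_{\mathbb{R}}(s+1)$; this produces exactly the constant $i^{\sigma}$.

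There is no substantial obstacle: the lemma is bookkeeping plus a citation. The one point that demands care is the normalization of the Fourier transform. Because the paper fixes $\psi(x)=e(-x)$, one must check that the resulting local constant is precisely $i_{\sigma}=\prod_{\mu}i^{\sigma(\mu)}$ and not, say, $\prod_{\mu}(-i)^{\sigma(\mu)}$; this is exactly the content of the sign check $\hat{\phi}_{1}=-i\,\phi_{1}$, after which $i^{\sigma}\cdot(-i)=1$ reproduces the functional equation on the test function $\phi_{1}$. The only other thing to track is the order of quantifiers in the analytic continuation: the displayed identity of integrals is literally valid only on the strip $0<\Re(s_{\mu})<1$, and outside it the statement is to be read as an identity of meromorphic continuations (equivalently, both sides become entire after the stated division by $\Gamma_{\sigma}$).
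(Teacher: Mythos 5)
Your proof is correct, and it does more than the paper does: the paper offers no argument for this lemma at all, simply invoking Tate's local functional equation at the real place (the sentence immediately preceding the statement is ``We use Tate's local functional equation for $\mathbb{R}$ in [Tate]''), implicitly leaving both the reduction to $n=1$ and the one-dimensional case to the reference. Your reduction is sound: every object in the identity (the measure $d^{\times}t$, the quasi-character $|\bm{t}|_{\sigma}^{\bm{s}}$, the factor $\Gamma_{\sigma}$, the constant $i_{\sigma}$, and the Fourier transform on product functions) factors coordinatewise, both sides are continuous functionals on $\mathcal{S}(\mathbb{R}^{n})$ in the strip $0<\Re(s_{\mu})<1$, and finite sums of product functions are dense in the Schwartz topology, so Fubini plus the $n=1$ case gives the general case, with meromorphic continuation handling $\bm{s}$ outside the strip. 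Your treatment of the one point that genuinely requires care --- the normalization of the local constant --- is also right: with the paper's convention $\psi(x)=e(-x)$ one has $\hat{\phi}_{1}=-i\phi_{1}$ for $\phi_{1}(t)=te^{-\pi t^{2}}$, and the check $i\cdot(-i)=1$ on this test function confirms that the constant is $i^{\sigma}$ rather than $(-i)^{\sigma}$; the $\sigma=0$ case is immediate from $\hat{\phi}_{0}=\phi_{0}$. The only steps you leave as citations are standard (one-dimensionality of the space of $(\mathrm{sgn})^{\sigma}|\cdot|^{s}$-homogeneous tempered distributions for generic $s$, and density of product functions), which is an acceptable level of detail for a lemma the paper itself treats as known.
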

Let $f\in\mathscr{U}(V)$. Assume that there exists $\bar{f}\in\mathscr{U}(V^{*})$
such that $F(\bar{f})$ equals to the Fourier transform of $F(f)$.
Here, we consider the Haar measure on $V(\mathbb{R})$ as the pull-back
of the Lebesgue measure on $\mathbb{R}^{n}$ by $\rho$. By applying
the Lemma \ref{lem:Tate} to $\phi=F(f)\circ(\rho^{*})^{-1}$, we
get
\[
\hat{L}_{\sigma}(s,f,\rho)=i_{1-\sigma}\hat{L}_{\sigma}(1-\bm{s},\bar{f},\rho^{*}).
\]

\subsection{Regularity of $\Phi\in\mathcal{S}(V(\mathbb{A}_{f}))$ with respect
to $\mathbb{B}\in C_{n}(V)$}

In the previous section, we defined $L_{\sigma}(\bm{s},f,\rho)$ for
$f\in\mathscr{U}(V)$ such that $F(f)\in\mathcal{S}(V^{*}(\mathbb{R}))$.
In this section, we show that 
\[
\Phi\mathfrak{C}(\mathbb{B})\in\mathscr{U}(V)
\]
for $\Phi\in\mathcal{S}(V(\mathbb{A}_{f}))$ and $\mathbb{B}\in C_{n}(V)$,
where $\Phi\mathfrak{C}(V)$ is defined by
\[
(\Phi\mathfrak{C}(\mathbb{B}))(v)=\Phi(v)\cdot\mathfrak{C}(\mathbb{B})(v)\ \ \ \ (v\in V).
\]
Thus we can define $F(\Phi\mathfrak{C}(\mathbb{B}))\in\mathcal{M}(V^{*}(\mathbb{R}))$
for all $\Phi\in\mathcal{S}(V(\mathbb{A}_{f}))$ and $\mathbb{B}\in C_{n}(V)$,
but, $F(\Phi\mathfrak{C}(\mathbb{B}))\in\mathcal{S}(V^{*}(\mathbb{R}))$
does not always hold. In this section, we introduce the notion of
the regularity of $\Phi$ with respect to $\mathbb{B}$, which is
a sufficient condition for $F(\Phi\mathfrak{C}(\mathbb{B}))\in\mathcal{S}(V^{*}(\mathbb{R}))$. 
\begin{defn}
For $\Phi\in\mathcal{S}(V(\mathbb{A}_{f}))$ and a simple cone $\Lambda=\Lambda(u_{1},\dots,u_{n})\in C_{n}(V)$,
we say that $(\Phi,\Lambda)$ satisfies Condition $P_{1}$ if 
\[
\Phi(x_{1}u_{1}+\cdots+x_{n}u_{n})=0
\]
for all $(x_{k})_{k=1}^{n}\in\mathbb{A}_{f}^{n}$ such that $0\in\{x_{1},\dots,x_{n}\}$.
\end{defn}

\begin{defn}
For $\Phi\in\mathcal{S}(V(\mathbb{A}_{f}))$ and a simple cone $\Lambda=\Lambda(u_{1},\dots,u_{n})\in C_{n}(V)$,
we say that $(\Phi,\Lambda)$ satisfies Condition $P_{2}$ if 
\[
\int_{\mathbb{A}_{f}}\Phi(v+xu_{j})dx=0
\]
for all $1\leq j\leq n$ and $v\in V(\mathbb{A}_{f})$.\end{defn}
\begin{lem}
\label{Rem:p1p2}For $\Phi\in\mathcal{S}(V(\mathbb{A}_{f}))$ and
a simple cone $\Lambda\in C_{n}(V)$, $(\Phi,\Lambda)$ satisfies
Condition $P_{2}$ if and only if $(\hat{\Phi},\varphi(\Lambda))$
satisfies Condition $P_{1}$.\end{lem}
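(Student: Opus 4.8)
The plan is to reduce the stated equivalence to a statement about one coordinate direction at a time, and then to recognize it as the familiar fact that integrating a function along a line and restricting its Fourier transform to the annihilating hyperplane are dual operations. First I would rewrite both conditions in terms of the basis $u_1,\dots,u_n$ of $\Lambda=\Lambda(u_1,\dots,u_n)$ and its dual basis $v_1,\dots,v_n$ — the latter being exactly the basis that defines $\varphi(\Lambda)=\Lambda(v_1,\dots,v_n)$. Since $\langle v_k,u_l\rangle=\delta_{kl}$, an element $t=x_1v_1+\cdots+x_nv_n\in V^*(\mathbb{A}_f)$ has $x_j=\langle t,u_j\rangle$, so Condition $P_1$ for $(\hat\Phi,\varphi(\Lambda))$ asserts precisely that, for each $j$, $\hat\Phi$ vanishes on $A_j:=\{t\in V^*(\mathbb{A}_f)\mid\langle t,u_j\rangle=0\}=\sum_{k\neq j}\mathbb{A}_fv_k$, while Condition $P_2$ for $(\Phi,\Lambda)$ asserts precisely that, for each $j$, the function $v'\mapsto\int_{\mathbb{A}_f}\Phi(v'+xu_j)\,dx$ vanishes on $V(\mathbb{A}_f)$. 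Both are conjunctions over $j=1,\dots,n$, so it suffices to treat one fixed $j$.

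Fix $j$ and write $V=\mathbb{Q}u_j\oplus W$ with $W=\sum_{k\neq j}\mathbb{Q}u_k$, so that $V(\mathbb{A}_f)=\mathbb{A}_fu_j\oplus W(\mathbb{A}_f)$, and put $g(w)=\int_{\mathbb{A}_f}\Phi(xu_j+w)\,dx$ for $w\in W(\mathbb{A}_f)$. Then $g\in\mathcal{S}(W(\mathbb{A}_f))$, and by translation invariance of the Haar measure on $\mathbb{A}_f$ the $j$-th instance of $P_2$ is equivalent to $g\equiv0$. Restriction of functionals identifies $A_j$ with $W^*(\mathbb{A}_f)$, and the key computation is that, for $t\in A_j$,
\[
\hat g(t)=\int_{W(\mathbb{A}_f)}\left(\int_{\mathbb{A}_f}\Phi(xu_j+w)\,dx\right)\psi(\langle t,w\rangle)\,dw=\int_{V(\mathbb{A}_f)}\Phi(v)\,\psi(\langle t,v\rangle)\,dv=\hat\Phi(t),
\]
using $\langle t,xu_j+w\rangle=\langle t,w\rangle$ (because $\langle t,u_j\rangle=0$) for the first equality and Fubini for the splitting $v=xu_j+w$, $dv=dx\,dw$, for the second. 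Hence $\hat g=\hat\Phi|_{A_j}$; since the Fourier transform is injective on $\mathcal{S}(W(\mathbb{A}_f))$ we conclude $g\equiv0\iff\hat\Phi|_{A_j}\equiv0$, which is exactly the $j$-th instance of $P_1$ for $(\hat\Phi,\varphi(\Lambda))$. Taking the conjunction over $j$ gives the lemma.

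The main obstacle, as I see it, will be the Fourier-analytic bookkeeping on $V(\mathbb{A}_f)$ rather than anything conceptual: one must check that integration along the closed subgroup $\mathbb{A}_fu_j$ maps $\mathcal{S}(V(\mathbb{A}_f))$ into $\mathcal{S}(W(\mathbb{A}_f))$; that $A_j$ is the annihilator of $\mathbb{A}_fu_j$, hence canonically $W^*(\mathbb{A}_f)$ (which uses that $x\mapsto\psi(xa)$ is trivial on $\mathbb{A}_f$ only for $a=0$); and that the Haar measures on $\mathbb{A}_fu_j$, $W(\mathbb{A}_f)$ and $V(\mathbb{A}_f)$ can be normalized compatibly so the displayed identity holds without an extra constant — though, since only the vanishing of $g$ and of $\hat\Phi|_{A_j}$ is at stake, any nonzero normalization constant would in fact be harmless.
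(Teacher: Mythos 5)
Your argument is correct and is essentially the proof the paper gives: both reduce to a single direction $j$, identify the Fourier transform of the fiber integral $w\mapsto\int_{\mathbb{A}_f}\Phi(xu_j+w)\,dx$ with the restriction of $\hat\Phi$ to the hyperplane spanned by the dual basis vectors other than $v_j$, and invoke injectivity of the Fourier transform on Schwartz--Bruhat functions for the converse direction. Your phrasing via the splitting $V=\mathbb{Q}u_j\oplus W$ is just a coordinate-free rewording of the paper's computation in the coordinates $x_1,\dots,x_n$.
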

\begin{proof}
Set $\Lambda=\Lambda(u_{1},\dots,u_{n})$ and $\varphi(\Lambda)=\Lambda(t_{1},\dots,t_{n})$.
If $(\Phi,\Lambda)$ satisfies Condition $P_{2}$ then $(\hat{\Phi},\varphi(\Lambda))$
satisfies Condition $P_{1}$ since
\begin{align*}
\hat{\Phi}(y_{2}t_{2}+\cdots+y_{n}t_{n}) & =\int_{\mathbb{A}_{f}^{n}}\Phi(x_{1}u_{1}+\cdots+x_{n}u_{n})\psi(x_{2}y_{2}+\cdots+x_{n}y_{n})dx_{1}\cdots dx_{n}\\
 & =\int_{\mathbb{A}_{f}^{n-1}}(\int_{\mathbb{A}_{f}}\Phi(x_{1}u_{1}+\cdots+x_{n}u_{n})dx_{1})\psi(x_{2}y_{2}+\cdots+x_{n}y_{n})dx_{2}\cdots dx_{n}\\
 & =0
\end{align*}
for all $y_{2},\dots,y_{n}\in\mathbb{A}_{f}$. Assume that $(\hat{\Phi},\varphi(\Lambda))$
satisfies Condition $P_{1}$. It is enough to prove that $\phi=0$
where $\phi:\mathbb{A}_{f}^{n-1}\to\mathbb{C}$ is defined by 
\[
\phi(x_{2},\dots,x_{n})=\int_{\mathbb{A}_{f}}\Phi(x_{1}u_{1}+x_{2}u_{2}+\cdots+x_{n}u_{n})dx_{1}.
\]
The Fourier transform $\hat{\phi}:\mathbb{A}_{f}^{n-1}\to\mathbb{C}$
of $\phi$ are given by
\begin{align*}
\hat{\phi}(y_{2},\dots,y_{n}) & =\int_{\mathbb{A}_{f}^{n-1}}\phi(x_{2},\dots,x_{n})\psi(x_{2}y_{2}+\cdots+x_{n}y_{n})dx_{2}\cdots dx_{n}\\
 & =\int_{\mathbb{A}_{f}^{n}}\Phi(x_{1}u_{1}+x_{2}u_{2}+\cdots+x_{n}u_{n})\psi(x_{2}y_{2}+\cdots+x_{n}y_{n})dx_{1}\cdots dx_{n}\\
 & =\hat{\Phi}(y_{2}t_{2}+\cdots+y_{n}t_{n}).
\end{align*}
Since $(\hat{\Phi},\varphi(\Lambda))$ satisfies Condition $P_{1}$,
we have $\hat{\Phi}(y_{2}t_{2}+\cdots+y_{n}t_{n})=0$ and $\hat{\phi}=0.$
Thus we have $\phi=0$.
\end{proof}

\begin{defn}
\label{Def_regular_cone}For $\Phi\in\mathcal{S}(V(\mathbb{A}_{f}))$
and a simple cone $\Lambda=\Lambda(u_{1},\dots,u_{n})$ where $u_{1},\dots,u_{n}\in V$,
we say that $\Phi$ is regular with respect to $\Lambda$ if $(\Phi,\Lambda)$
satisfies Condition $P_{1}$ and Condition$P_{2}$.
\end{defn}
For $\mathbb{B}\in C_{n}(V)$, there exist a finite set $A$ of distinct
cones and a function $c:A\to\mathbb{Z}\setminus\{0\}$ such that $\mathbb{B}=\sum_{\Lambda\in A}c(\Lambda)\Lambda$.
We define the set $X(\mathbb{B})\subset C_{n}(V)$ of simple cones
by
\[
X(\mathbb{B})=\{\Lambda\mid\Lambda\in A\mbox{ and }\Lambda\mbox{ is a simple cone}\}.
\]

\begin{defn}
\label{Def_regular_fan}For $\Phi\in\mathcal{S}(V(\mathbb{A}_{f}))$
and $\mathbb{B}\in C_{n}(V)$, $\Phi$ is said to be regular with
respect to $\mathbb{B}$ if $\Phi$ is regular with respect to $\Lambda$
for all $\Lambda\in X(\mathbb{B})$.
\end{defn}
For $u_{1},\dots,u_{n}\in V$ (resp. $\mathbb{B}\in C_{n}(V)$), we
denote by $\mathcal{R}(V;u_{1},\dots,u_{n})$ (resp. $\mathcal{R}(V,\mathbb{B})$)
the set of all $\Phi\in\mathcal{S}(V(\mathbb{A}_{f}))$ which is regular
with respect to $\Lambda(u_{1},\dots,u_{n})$ (resp. $\mathbb{B}\in C_{n}(V)$).
We will prove later that $F(\Phi,\mathbb{B},\sigma)\in\mathcal{S}(V(\mathbb{A}_{f}))$
for all $\Phi\in\mathcal{R}(V,\mathbb{B})$. 
\begin{lem}
\label{Lem:regulartiy_from_tensor}Let $(u_{1},\dots,u_{n})$ be a
basis of $V$. For $1\leq j\leq n$, let $V_{j}$ be a $1$-dimensional
vector space generated by $u_{j}$. Then we have
\[
\bigotimes_{j=1}^{n}\mathcal{R}(V_{j};u_{j})=\mathcal{R}(V;u_{1},\dots u_{n})
\]
as a subspace of $\mathcal{S}(V(\mathbb{A}_{f}))$.
\end{lem}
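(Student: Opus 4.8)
The plan is to use that the basis $u_1,\dots,u_n$ gives a product decomposition $V(\mathbb{A}_f)=\prod_{j=1}^{n}V_j(\mathbb{A}_f)$, hence a canonical identification $\mathcal{S}(V(\mathbb{A}_f))=\bigotimes_{j=1}^{n}\mathcal{S}(V_j(\mathbb{A}_f))$ of Schwartz--Bruhat spaces, under which a pure tensor $\Phi_1\otimes\cdots\otimes\Phi_n$ is the function $\sum_j x_j u_j\mapsto\prod_j\Phi_j(x_j u_j)$. First I would record what Conditions $P_1$ and $P_2$ say in the one-dimensional situation: for the cone $\Lambda(u_j)$ in $V_j$, Condition $P_1$ for $(\Phi_j,\Lambda(u_j))$ is exactly $\Phi_j(0)=0$, while, by translation invariance of Haar measure, Condition $P_2$ is exactly $\int_{\mathbb{A}_f}\Phi_j(x u_j)\,dx=0$. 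Writing $e_j,I_j:\mathcal{S}(V_j(\mathbb{A}_f))\to\mathbb{C}$ for the functionals ``value at $0$'' and ``integral along $u_j$'', we get $\mathcal{R}(V_j;u_j)=\ker e_j\cap\ker I_j$; since $e_j$ and $I_j$ are linearly independent, this has codimension $2$ and $(e_j,I_j):\mathcal{S}(V_j(\mathbb{A}_f))\to\mathbb{C}^2$ is surjective.

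The inclusion $\bigotimes_j\mathcal{R}(V_j;u_j)\subseteq\mathcal{R}(V;u_1,\dots,u_n)$ is the easy direction. Since the set of $\Phi$ satisfying $P_1$ and $P_2$ with respect to $\Lambda(u_1,\dots,u_n)$ is a linear subspace of $\mathcal{S}(V(\mathbb{A}_f))$, it suffices to verify both conditions for a pure tensor $\Phi=\Phi_1\otimes\cdots\otimes\Phi_n$ with each $\Phi_j\in\mathcal{R}(V_j;u_j)$. If some $x_k=0$ the factor $\Phi_k(0)=0$ kills $\Phi(\sum_j x_j u_j)$, giving $P_1$; and in $\int_{\mathbb{A}_f}\Phi(v+x u_k)\,dx$ only the $k$-th factor involves $x$, so by translation invariance the integral is $I_k(\Phi_k)\prod_{j\neq k}\Phi_j(v_j)=0$, giving $P_2$. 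Extend by linearity.

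For the reverse inclusion I would choose, for each $j$, elements $a_j,b_j\in\mathcal{S}(V_j(\mathbb{A}_f))$ with $(e_j(a_j),I_j(a_j))=(1,0)$ and $(e_j(b_j),I_j(b_j))=(0,1)$, so that $\mathcal{S}(V_j(\mathbb{A}_f))=\mathcal{R}(V_j;u_j)\oplus\mathbb{C}a_j\oplus\mathbb{C}b_j$. Distributing the tensor product over these three-term decompositions writes $\mathcal{S}(V(\mathbb{A}_f))$ as a direct sum of $3^n$ summands $\bigotimes_j X_j^{\epsilon(j)}$ indexed by $\epsilon:\{1,\dots,n\}\to\{R,a,b\}$, where $X_j^R=\mathcal{R}(V_j;u_j)$, $X_j^a=\mathbb{C}a_j$, $X_j^b=\mathbb{C}b_j$; the summand for $\epsilon\equiv R$ is precisely $\bigotimes_j\mathcal{R}(V_j;u_j)$. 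Given $\Phi\in\mathcal{R}(V;u_1,\dots,u_n)$, write $\Phi=\sum_\epsilon\Phi_\epsilon$ accordingly. The crucial translation is that Condition $P_1$ for $\Phi$ is equivalent to the vanishing, for every $k$, of the restriction $\Phi|_{x_k=0}$ as a function on $\prod_{j\neq k}V_j(\mathbb{A}_f)$, i.e.\ of $(\mathrm{id}\otimes\cdots\otimes e_k\otimes\cdots\otimes\mathrm{id})(\Phi)$ in $\bigotimes_{j\neq k}\mathcal{S}(V_j(\mathbb{A}_f))$, and likewise Condition $P_2$ is equivalent to the vanishing of $(\mathrm{id}\otimes\cdots\otimes I_k\otimes\cdots\otimes\mathrm{id})(\Phi)$ for every $k$; here one uses that the natural map from the tensor product into the function space is injective. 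Applying $e_k$ in the $k$-th slot to $\Phi=\sum_\epsilon\Phi_\epsilon$ annihilates the summands with $\epsilon(k)\in\{R,b\}$ and sends the $k$-th factor of a summand with $\epsilon(k)=a$ isomorphically to $\mathbb{C}$; since these images lie in pairwise distinct summands of $\bigotimes_{j\neq k}\mathcal{S}(V_j(\mathbb{A}_f))$, their sum vanishes only if each vanishes, forcing $\Phi_\epsilon=0$ whenever $\epsilon(k)=a$. Symmetrically, applying $I_k$ forces $\Phi_\epsilon=0$ whenever $\epsilon(k)=b$. Letting $k$ range over $1,\dots,n$, only $\Phi_{R\cdots R}$ survives, so $\Phi=\Phi_{R\cdots R}\in\bigotimes_j\mathcal{R}(V_j;u_j)$, which proves the lemma.

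I expect the main obstacle to be purely foundational bookkeeping rather than a genuine difficulty: one must set up cleanly the identification $\mathcal{S}(V(\mathbb{A}_f))=\bigotimes_j\mathcal{S}(V_j(\mathbb{A}_f))$ and verify that evaluating or integrating in a single coordinate corresponds to applying the functional $e_k$ or $I_k$ in the $k$-th tensor factor --- in particular that a Schwartz--Bruhat function on the product which vanishes identically represents the zero tensor. Once this dictionary is in place, everything reduces to elementary multilinear algebra with the three-term direct sum $\mathcal{S}(V_j(\mathbb{A}_f))=\mathcal{R}(V_j;u_j)\oplus\mathbb{C}a_j\oplus\mathbb{C}b_j$, with no further analysis involved.
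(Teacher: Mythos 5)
Your proof is correct, and it is a careful, complete version of the argument the paper leaves implicit: the paper states only that ``this lemma is obvious from the definition'' and gives no proof. Your reduction of $P_1$ and $P_2$ to the kernels of the evaluation and integration functionals $e_j,I_j$, the identification $\mathcal{S}(V(\mathbb{A}_f))=\bigotimes_j\mathcal{S}(V_j(\mathbb{A}_f))$ (valid since Schwartz--Bruhat functions on $\mathbb{A}_f^n$ are spanned by characteristic functions of product sets), and the $3^n$-fold direct-sum decomposition via complements $\mathbb{C}a_j\oplus\mathbb{C}b_j$ together give exactly the two inclusions needed, with no gaps.
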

This lemma is obvious from the definition.
\begin{lem}
For $\mathbb{B}\in C_{n}(V)$ and $\Phi\in\mathcal{S}(V(\mathbb{A}_{f}))$,
we have $\Phi\mathfrak{C}(\mathbb{B})\in\mathscr{U}(V)$. In other
words, there exists $\Delta\in X(V)\setminus\{0\}$ such that $\Delta(\Phi\mathfrak{C}(\mathbb{B}))(v)=0$
for all but finitely many $v\in V$.\end{lem}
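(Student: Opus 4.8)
The plan is to reduce to a single simple cone and then exhibit an explicit ``inclusion--exclusion'' operator in $X(V)$ that collapses $\Phi\mathfrak{C}(\Lambda)$ onto a bounded region.

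First I would record two structural facts. Since $P\colon X(V)\to\mathcal{M}(V^{*}(\mathbb{R}))$ is an injective ring homomorphism into a field, $X(V)$ is an integral domain; in particular $X(V)\setminus\{0\}$ is closed under multiplication. Also each $\Delta_{w}$ carries $\mathscr{U}'(V)$ into $\mathscr{U}'(V)$, so $\mathscr{U}'(V)$ is stable under $X(V)$, and $\mathscr{U}(V)$ is a $\mathbb{C}$-subspace of $\mathrm{MAP}(V,\mathbb{C})$. Writing $\mathbb{B}=\sum_{j}c_{j}\Lambda_{j}$ as a finite $\mathbb{Z}$-combination of cones, and choosing for each $j$ a nonzero $\Delta_{j}\in X(V)$ with $\Delta_{j}(\Phi\mathfrak{C}(\Lambda_{j}))\in\mathscr{U}'(V)$, the product $\Delta=\prod_{j}\Delta_{j}$ is nonzero and satisfies $\Delta(\Phi\mathfrak{C}(\mathbb{B}))\in\mathscr{U}'(V)$. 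Hence it suffices to treat a single cone $\Lambda$. If $\Lambda$ is not simple then $\mathfrak{C}(\Lambda)=0$ and there is nothing to prove; if $\Lambda=\Lambda(u_{1},\dots,u_{n})$ is simple then $\mathfrak{C}(\Lambda)=\pm\bm{1}_{C(u_{1},\dots,u_{n})}$, so I am reduced to showing $\Phi\,\bm{1}_{C(u_{1},\dots,u_{n})}\in\mathscr{U}(V)$.

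Next I would pin down the behaviour of $\Phi$ on $V$. A Schwartz--Bruhat function on $V(\mathbb{A}_{f})$ is locally constant and compactly supported; the subgroups $\mathbb{L}\otimes\hat{\mathbb{Z}}$ (for $\mathbb{Z}$-lattices $\mathbb{L}\subset V$) form a neighbourhood basis of $0$, and since $V(\mathbb{A}_{f})=\bigcup_{m}\tfrac1m(\mathbb{L}\otimes\hat{\mathbb{Z}})$ with the terms open, every compact subset lies in one of them. Using $V\cap(\mathbb{L}\otimes\hat{\mathbb{Z}})=\mathbb{L}$, one obtains $\mathbb{Z}$-lattices $\mathbb{L}',M\subset V$ such that $\Phi(v+w)=\Phi(v)$ for all $v\in V$, $w\in\mathbb{L}'$, and $\Phi(v)=0$ for all $v\in V\setminus M$. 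Since $\mathbb{L}'$ has full rank, choose a positive integer $m$ with $m u_{j}\in\mathbb{L}'$ for every $j$; put $w_{j}=m u_{j}$ and
\[
\Delta=\prod_{j=1}^{n}\left(1-\Delta_{w_{j}}\right)=\sum_{S\subseteq\{1,\dots,n\}}(-1)^{|S|}\Delta_{w_{S}},\qquad w_{S}=\sum_{j\in S}w_{j}.
\]
Then $\Delta\in X(V)$ is nonzero, since the coefficient of $\Delta_{0}$ is $1$.

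Finally I would compute $\Delta(\Phi\,\bm{1}_{C(u_{1},\dots,u_{n})})$ at $v=t_{1}u_{1}+\cdots+t_{n}u_{n}$. Because $w_{S}\in\mathbb{L}'$, periodicity gives $\Phi(v-w_{S})=\Phi(v)$, so
\[
\Delta(\Phi\,\bm{1}_{C(u_{1},\dots,u_{n})})(v)=\Phi(v)\sum_{S\subseteq\{1,\dots,n\}}(-1)^{|S|}\,\bm{1}_{C(u_{1},\dots,u_{n})}\Bigl(\textstyle\sum_{j}(t_{j}-m\,[j\in S])\,u_{j}\Bigr).
\]
Writing $A=\{j:t_{j}>0\}$ and $B=\{j:t_{j}>m\}$ (so $B\subseteq A$ because $m\ge 1$), the $S$-summand equals $1$ exactly when $A^{c}\subseteq S\subseteq B$, and the alternating sum over that interval of subsets vanishes unless $A^{c}=B$; combined with $B\subseteq A$ this forces $A=\{1,\dots,n\}$ and $B=\emptyset$, in which case the sum equals $1$. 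Hence $\Delta(\Phi\,\bm{1}_{C(u_{1},\dots,u_{n})})(v)=\Phi(v)$ when $0<t_{j}\le m$ for all $j$, and $0$ otherwise. The set of such $v$ is bounded in $V(\mathbb{R})$ (as $u_{1},\dots,u_{n}$ are linearly independent), so it meets the discrete set $M$ in a finite set; therefore $\Delta(\Phi\,\bm{1}_{C(u_{1},\dots,u_{n})})\in\mathscr{U}'(V)$ and $\Phi\,\bm{1}_{C(u_{1},\dots,u_{n})}\in\mathscr{U}(V)$. The one genuinely substantive point is the choice of the difference operator $\prod_{j}(1-\Delta_{w_{j}})$ along the cone directions together with the verification that the residual support is precisely the bounded ``fundamental box'' $\{0<t_{j}\le m\}$; the reduction to a single cone and the description of $\Phi|_{V}$ are routine.
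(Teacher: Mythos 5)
Your proof is correct and follows essentially the same route as the paper: reduce to a single simple cone and apply the difference operator $\prod_{j}(1-\Delta_{mu_{j}})$ along the cone directions, observing that the residual support is the bounded box $\{0<t_{j}\le m\}$ intersected with a lattice. You merely make explicit some steps the paper leaves implicit (the reduction to one cone via the integral-domain property of $X(V)$, the lattice periodicity and lattice support of $\Phi|_{V}$, and the inclusion--exclusion computation), all of which check out.
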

\begin{proof}
It is enough to consider the case where $\mathbb{B}=\Lambda(u_{1},\dots,u_{n})$
is a simple cone. Assume that an orientation of $V$ are given by
$r(u_{1},\dots,u_{n})=1$. There exists positive integer $m>0$ such
that $\Phi(v+mu_{j})=\Phi(v)$ for all $v\in V$ and $1\leq j\leq n$.
Set
\[
\Delta=\prod_{j=1}^{n}(1-\Delta_{mu_{j}}).
\]
Then, for $v=k_{1}mu_{1}+\cdots+k_{n}mu_{n}\in V$ with $k_{1},\dots,k_{r}\in\mathbb{Q}$,
we have
\[
\Delta(\Phi\mathfrak{C}(\mathbb{B}))(v)=\begin{cases}
\Phi(v) & 0<k_{j}\leq1\mbox{ for all }1\leq j\leq n\\
0 & \mbox{otherwise}.
\end{cases}
\]
It follows that $\Delta(\Phi\mathfrak{C}(\mathbb{B}))(v)=0$ for all
but finitely many $v\in V$, and the lemma is proved.
\end{proof}
For $\mathbb{B}\in C_{n}(V)$ and $\Phi\in\mathcal{S}(V(\mathbb{A}_{f}))$,
we set
\[
F(\Phi,\mathbb{B})=F(\Phi\mathfrak{C}(\mathbb{B}))\in\mathcal{M}(V^{*}(\mathbb{R})).
\]
We need some lemmas to prove that $F(\Phi,\mathbb{B})\in S(V^{*}(\mathbb{R}))$
for all $\Phi\in\mathcal{R}(V,\mathbb{B})$.
\begin{lem}
\label{lem:F_tensor}Let $(v_{1},\dots,v_{n})$ be a basis of $V$,
and let $(\hat{v}_{1},\dots,\hat{v}_{n})$ be the dual basis of $(v_{1},\dots,v_{n})$.
For $1\leq j\leq n$, let $V_{j}$ be a vector space generated by
$v_{j}$, and let $V_{j}^{*}$ be a vector space generated by $\hat{v}_{j}$.
Assume that an orientation $r$ of $V$ are given by $r=r_{1}\times\cdots\times r_{n}$
where $r_{1},\dots,r_{n}$ are orientations of $V_{1},\dots,V_{n}$.
Then for $\Phi_{j}\in\mathcal{S}(V_{j}(\mathbb{A}_{f}))$, we have
\[
F(\Phi,\Lambda(v_{1},\dots,v_{n}))=\bigotimes_{j=1}^{n}F(\Phi_{j},\Lambda(v_{j}))\in\bigotimes_{j=1}^{n}\mathcal{S}(V_{j}^{*}(\mathbb{R}))
\]
where
\[
\Phi=\bigotimes_{j=1}^{n}\Phi_{j}\in\mathcal{S}(V(\mathbb{A}_{f})).
\]
\end{lem}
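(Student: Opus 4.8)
The plan is to reduce the identity to Lemma \ref{lem:F-tensor-f}, the multiplicativity of $F$ under external tensor products, by showing that the characteristic function $\mathfrak{C}(\Lambda(v_{1},\dots,v_{n}))$ is itself an external tensor product of the one-dimensional characteristic functions $\mathfrak{C}(\Lambda(v_{j}))$. Throughout I identify $V$ with $V_{1}\times\cdots\times V_{n}$ via the basis $(v_{1},\dots,v_{n})$, so that $V_{j}=\mathbb{Q}v_{j}$ sits in the $j$-th slot, and correspondingly $V^{*}(\mathbb{R})$ is identified with $\prod_{j=1}^{n}V_{j}^{*}(\mathbb{R})$ by the dual basis $(\hat v_{1},\dots,\hat v_{n})$.

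Under this identification the cone $C(v_{1},\dots,v_{n})=\{t_{1}v_{1}+\cdots+t_{n}v_{n}\mid t_{\mu}\in\mathbb{Q}_{>0}\}$ is the product $C(v_{1})\times\cdots\times C(v_{n})$, so $\bm{1}_{C(v_{1},\dots,v_{n})}=\bigotimes_{j=1}^{n}\bm{1}_{C(v_{j})}$. For the orientation factor, each $V_{j}$ is one-dimensional and $v_{j}\neq0$, so $r_{j}(v_{j})\in\{\pm1\}$, and the defining property of the product orientation $r=r_{1}\times\cdots\times r_{n}$ gives $r(v_{1},\dots,v_{n})=\prod_{j=1}^{n}r_{j}(v_{j})$. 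Plugging both facts into the definition of $\mathfrak{C}$ on simple cones yields
\[
\mathfrak{C}(\Lambda(v_{1},\dots,v_{n}))=\Bigl(\prod_{j=1}^{n}r_{j}(v_{j})\Bigr)\bigotimes_{j=1}^{n}\bm{1}_{C(v_{j})}=\bigotimes_{j=1}^{n}\mathfrak{C}(\Lambda(v_{j})).
\]
Multiplying by $\Phi=\bigotimes_{j}\Phi_{j}$ coordinatewise gives $\Phi\,\mathfrak{C}(\Lambda(v_{1},\dots,v_{n}))=\bigotimes_{j=1}^{n}\bigl(\Phi_{j}\,\mathfrak{C}(\Lambda(v_{j}))\bigr)$ as functions on $V$; each factor $\Phi_{j}\mathfrak{C}(\Lambda(v_{j}))$ lies in $\mathscr{U}(V_{j})$ by the preceding lemma, so Lemma \ref{lem:F-tensor-f} applies and gives
\[
F(\Phi,\Lambda(v_{1},\dots,v_{n}))=F\Bigl(\bigotimes_{j=1}^{n}\Phi_{j}\mathfrak{C}(\Lambda(v_{j}))\Bigr)=\bigotimes_{j=1}^{n}F\bigl(\Phi_{j}\mathfrak{C}(\Lambda(v_{j}))\bigr)=\bigotimes_{j=1}^{n}F(\Phi_{j},\Lambda(v_{j})),
\]
which is the asserted formula.

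The content is thus almost entirely bookkeeping. The one point that needs a moment's care is the orientation: one has to verify that the scalar $r(v_{1},\dots,v_{n})$ occurring in the definition of $\mathfrak{C}$ really does factor as $\prod_{j}r_{j}(v_{j})$, which is exactly the hypothesis $r=r_{1}\times\cdots\times r_{n}$ unwound for a basis adapted to the product decomposition $V=V_{1}\times\cdots\times V_{n}$. If one also wants the stated membership in $\bigotimes_{j}\mathcal{S}(V_{j}^{*}(\mathbb{R}))$, it suffices to know this factorwise, e.g. when each $\Phi_{j}$ is regular with respect to $\Lambda(v_{j})$. Everything else is the definition of the external tensor product of functions together with Lemma \ref{lem:F-tensor-f}.
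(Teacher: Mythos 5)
Your proof is correct and follows exactly the paper's route: reduce to Lemma \ref{lem:F-tensor-f} by checking that $(\bigotimes_{j}\Phi_{j})\mathfrak{C}(\Lambda(v_{1},\dots,v_{n}))=\bigotimes_{j}\Phi_{j}\mathfrak{C}(\Lambda(v_{j}))$, which the paper dismisses as obvious from the definition of $\mathfrak{C}$ and which you spell out via the factorization of the cone and of the product orientation. No further comment is needed.
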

\begin{proof}
By Lemma \ref{lem:F-tensor-f}, it is enough to prove that
\[
(\bigotimes_{j=1}^{n}\Phi_{j})\mathfrak{C}(\Lambda(v_{1},\dots,v_{n}))=\bigotimes_{j=1}^{n}\Phi_{j}\mathfrak{C}(\Lambda(v_{j})),
\]
and this is obvious from the definition of $\mathfrak{C}$. \end{proof}
\begin{lem}
\label{lem:one_dim_case}Let $V$ be a $1$-dimensional vector space.
For all $u\in V\setminus\{0\}$ and $\Phi\in\mathcal{R}(V;u)$, we
have
\[
F(\Phi,\Lambda(u))\in\mathcal{S}(V^{*}(\mathbb{R})).
\]
\end{lem}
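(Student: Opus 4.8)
The plan is to compute $F(\Phi\,\mathfrak{C}(\Lambda(u)))$ almost explicitly and then read off membership in $\mathcal{S}(V^{*}(\mathbb{R}))$ from the resulting formula. Fix the basis $u$, so that $V=\mathbb{Q}u$, $V(\mathbb{A}_{f})=\mathbb{A}_{f}u$, and $V^{*}(\mathbb{R})\cong\mathbb{R}$ via $t\mapsto s:=\left\langle t,u\right\rangle $; write $f=\Phi\,\mathfrak{C}(\Lambda(u))$. Choose positive integers $M,D$ such that $\Phi$ is invariant under translation by $M\hat{\mathbb{Z}}u$ and vanishes outside $\frac{1}{D}\hat{\mathbb{Z}}u$. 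Restricting to $V$, the function $f$ is $qu\mapsto r(u)\,\Phi(qu)\bm{1}_{q>0}$ on $\frac{1}{D}\mathbb{Z}u$, with $q\mapsto\Phi(qu)$ of period $M$; the scalar $r(u)\in\{\pm1\}$ only changes $F(f)$ by a sign, so I will suppress it. Applying $\Delta=1-\Delta_{Mu}\in X(V)$ and using periodicity gives $(\Delta f)(qu)=\Phi(qu)\bm{1}_{0<q\leq M}$, which is finitely supported, so $\Delta f\in\mathscr{U}'(V)$ and by definition $F(f)=F(\Delta f)/P(\Delta)$, that is
\[
F(f)(t)=\frac{N(s)}{1-e(iMs)},\qquad N(s):=\sum_{q\in\frac{1}{D}\mathbb{Z}\cap(0,M]}\Phi(qu)\,e(iqs).
\]

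Next I would analyze this quotient on $V^{*}(\mathbb{R})=\mathbb{R}$. For real $s$ one has $e(iMs)=e^{-2\pi Ms}$, so the denominator vanishes only at $s=0$, where it has a simple zero. Condition $P_{1}$ gives $\Phi(0)=0$, hence $\Phi(Mu)=\Phi(0)=0$ by periodicity, so the largest $q\in(0,M]$ with $\Phi(qu)\neq0$ satisfies $q_{1}<M$. Condition $P_{2}$ gives $\int_{\mathbb{A}_{f}}\Phi(xu)\,dx=0$; writing the Schwartz--Bruhat function $x\mapsto\Phi(xu)$ as a finite sum $\sum_{j}c_{j}\bm{1}_{a_{j}+M\hat{\mathbb{Z}}}$ with $a_{j}\in\frac{1}{D}\mathbb{Z}$ (possible since $\mathbb{A}_{f}=\mathbb{Q}+M\hat{\mathbb{Z}}$ and the support lies in $\frac{1}{D}\hat{\mathbb{Z}}$), the integral equals $\mathrm{vol}(M\hat{\mathbb{Z}})\sum_{j}c_{j}$, whereas $N(0)=\sum_{q\in\frac{1}{D}\mathbb{Z}\cap(0,M]}\Phi(qu)=\sum_{q\in\frac{1}{D}\mathbb{Z}/M\mathbb{Z}}\Phi(qu)=\sum_{j}c_{j}$; hence $N(0)=0$. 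Since $N$ is entire, the singularity of $F(f)$ at $s=0$ is removable, and $F(f)$ is real-analytic on all of $\mathbb{R}$.

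It then remains to verify rapid decay of $F(f)$ together with all its derivatives as $s\to\pm\infty$. Expanding the geometric series, for $s>0$ one gets $F(f)(t)=\sum_{q}\sum_{k\geq0}\Phi(qu)\,e^{-2\pi(q+kM)s}$ (in agreement, for $s>0$, with the absolutely convergent series $\sum_{v\in C(u)}\Phi(v)e(i\left\langle t,v\right\rangle )$ of Lemma \ref{lem:F-series}), while for $s<0$ one gets $F(f)(t)=-\sum_{q}\sum_{k\geq1}\Phi(qu)\,e^{2\pi(q-kM)s}$; here every exponent $q+kM$ is $\geq\frac{1}{D}>0$ in the first series, and every exponent $kM-q$ is $\geq M-q_{1}>0$ in the second by the use of $P_{1}$ above, so both series and all their term-by-term derivatives decay exponentially (as $s\to+\infty$, resp. $s\to-\infty$). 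Combined with real-analyticity on $\mathbb{R}$ this yields $F(f)\in\mathcal{S}(V^{*}(\mathbb{R}))$. I expect the one genuinely fiddly step to be the adelic bookkeeping in the middle paragraph --- tracking the support lattice $\frac{1}{D}\hat{\mathbb{Z}}$, the period lattice $M\hat{\mathbb{Z}}$, and the resulting identification of $\int_{\mathbb{A}_{f}}\Phi(xu)\,dx$ with $N(0)$ up to a positive constant --- while the removability of the pole and the asymptotic estimates are routine.
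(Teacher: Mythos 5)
Your proposal is correct and follows essentially the same route as the paper: both reduce $F(\Phi\,\mathfrak{C}(\Lambda(u)))$ to a finite exponential sum divided by $1-e^{-Nz}$ via the operator $1-\Delta_{Nu}$, use Condition $P_{1}$ to ensure the top exponent stays strictly below the period (giving decay in one direction) and Condition $P_{2}$ to make the numerator vanish at the origin (removing the pole), and conclude rapid decay at $\pm\infty$. The only cosmetic difference is that the paper rescales the generator to $w$ so the support becomes $\mathbb{Z}w$, whereas you keep $u$ and track the denominator lattice $\tfrac{1}{D}\mathbb{Z}$ explicitly.
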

\begin{proof}
Take $w\in V\setminus\{0\}$ and a positive integer $N$ which satisfy
the following conditions.
\begin{enumerate}
\item $r(u)=r(w)$
\item $\Phi(kw)=0$ for all $k\in\mathbb{Q}\setminus\mathbb{Z}$
\item $\Phi((k+N)w)=\Phi(kw)$ for all $k\in\mathbb{Z}$.
\end{enumerate}
From the condition $\Phi\in\mathcal{R}(V;u)$, we have $\Phi(Nw)=0$
and $\sum_{k=1}^{N}\Phi(kw)=0$. For all $k\in\mathbb{Q}$, we have
\[
(1-\Delta_{Nw})(\Phi\mathfrak{C}(\Lambda(u)))(kw)=\begin{cases}
\Phi(kw) & 1\leq k\leq N-1\\
0 & \mbox{otherwise}.
\end{cases}
\]
Hence we have
\begin{align}
F(\Phi,\Lambda(u))(t) & =\frac{1}{1-e(iN\left\langle w,t\right\rangle )}\sum_{k=1}^{N-1}\Phi(kw)e(ik\left\langle w,t\right\rangle )\nonumber \\
 & =\frac{1}{1-e^{-Nz}}\sum_{k=1}^{N-1}\Phi(kw)e^{-kz}\label{eq:temp_F}
\end{align}
where we set $z=2\pi\left\langle w,t\right\rangle $. Therefore $F(\Phi,\Lambda(u))(t)$
is rapidly decreasing for $z\to\pm\infty$. Moreover $F(\Phi,\Lambda(u))(t)$
is real analytic at $z=0$ since $\sum_{k=1}^{N-1}\Phi(kw)=0$. Hence
$F(\Phi,\Lambda(u))\in\mathcal{S}(V^{*}(\mathbb{R}))$.
\end{proof}

\begin{lem}
\label{lem:reg_u}Let $(u_{1},\dots,u_{n})$ be a basis of $V$. For
all $\Phi\in\mathcal{R}(V;u_{1},\dots,u_{n})$, we have $F(\Phi,\Lambda(u,\dots,u_{n}))\in\mathcal{S}(V^{*}(\mathbb{R}))$.\end{lem}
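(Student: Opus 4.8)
The strategy is to reduce Lemma~\ref{lem:reg_u} to the one-dimensional case, Lemma~\ref{lem:one_dim_case}, by means of the tensor decomposition established in Lemmas~\ref{Lem:regulartiy_from_tensor} and~\ref{lem:F_tensor}. First I would choose the orientation on $V$ adapted to the basis, setting $r(u_{1},\dots,u_{n})=1$, and for each $1\leq j\leq n$ let $V_{j}$ be the line spanned by $u_{j}$, equipped with the orientation $r_{j}(u_{j})=1$, so that $r=r_{1}\times\cdots\times r_{n}$ under the identification $V=V_{1}\times\cdots\times V_{n}$.

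\textbf{Reduction to a tensor.} The function $F$ is linear, but $\mathcal{S}(V(\mathbb{A}_{f}))$ is spanned by pure tensors $\bigotimes_{j}\Phi_{j}$ with $\Phi_{j}\in\mathcal{S}(V_{j}(\mathbb{A}_{f}))$; more precisely, by Lemma~\ref{Lem:regulartiy_from_tensor} the subspace $\mathcal{R}(V;u_{1},\dots,u_{n})$ equals $\bigotimes_{j=1}^{n}\mathcal{R}(V_{j};u_{j})$. Hence an arbitrary $\Phi\in\mathcal{R}(V;u_{1},\dots,u_{n})$ is a finite $\mathbb{C}$-linear combination of pure tensors $\bigotimes_{j}\Phi_{j}$ with each $\Phi_{j}\in\mathcal{R}(V_{j};u_{j})$, and by linearity of $F(\,\cdot\,,\Lambda(u_{1},\dots,u_{n}))$ it suffices to prove the claim when $\Phi=\bigotimes_{j=1}^{n}\Phi_{j}$ is itself such a pure tensor.

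\textbf{Applying the one-dimensional case.} For a pure tensor, Lemma~\ref{lem:F_tensor} gives
\[
F(\Phi,\Lambda(u_{1},\dots,u_{n}))=\bigotimes_{j=1}^{n}F(\Phi_{j},\Lambda(u_{j}))
\]
inside $\bigotimes_{j=1}^{n}\mathcal{M}(V_{j}^{*}(\mathbb{R}))$. By Lemma~\ref{lem:one_dim_case}, each factor $F(\Phi_{j},\Lambda(u_{j}))$ lies in $\mathcal{S}(V_{j}^{*}(\mathbb{R}))$ since $\Phi_{j}\in\mathcal{R}(V_{j};u_{j})$. A tensor product of Schwartz--Bruhat functions on the factors is a Schwartz--Bruhat function on the product, i.e. $\bigotimes_{j}\mathcal{S}(V_{j}^{*}(\mathbb{R}))\subset\mathcal{S}(V^{*}(\mathbb{R}))$ under the canonical identification $V^{*}=V_{1}^{*}\times\cdots\times V_{n}^{*}$; this is immediate from the product structure of the defining decay and local-constancy conditions. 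Therefore $F(\Phi,\Lambda(u_{1},\dots,u_{n}))\in\mathcal{S}(V^{*}(\mathbb{R}))$ for pure tensors, and hence for all $\Phi\in\mathcal{R}(V;u_{1},\dots,u_{n})$ by linearity.

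\textbf{Main obstacle.} The only genuinely non-formal point is the compatibility of orientations: Lemma~\ref{lem:F_tensor} requires $r=r_{1}\times\cdots\times r_{n}$, so one must be a little careful that the orientation chosen on $V$ is the product of the chosen orientations on the lines $V_{j}$ (any other choice differs by a sign on each $F(\Phi_{j},\Lambda(u_{j}))$, which does not affect membership in $\mathcal{S}$, so in fact this causes no real trouble). The passage from a single pure tensor to a general element is harmless because the target condition ``lies in $\mathcal{S}(V^{*}(\mathbb{R}))$'' is preserved under finite linear combinations. Everything else is a direct invocation of the three preceding lemmas.
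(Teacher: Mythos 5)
Your proof is correct and follows exactly the paper's own route: reduce to pure tensors via Lemma \ref{Lem:regulartiy_from_tensor}, factor $F$ via Lemma \ref{lem:F_tensor}, and invoke the one-dimensional case Lemma \ref{lem:one_dim_case}. Your added remarks on orientation compatibility and on closing up under linear combinations are sensible bookkeeping that the paper leaves implicit.
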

\begin{proof}
By Lemma \ref{Lem:regulartiy_from_tensor}, it is enough to prove
that 
\[
F(\bigotimes_{j=1}^{n}\Phi_{j},\Lambda(u_{1},\dots,u_{n}))\in\mathcal{S}(V^{*}(\mathbb{R}))
\]
for all $(\Phi_{j})_{j=1}^{n}\in\prod_{j=1}^{n}\mathcal{R}(V_{j};u_{j})$.
By Lemma \ref{lem:F_tensor}, it is enough to prove that
\[
F(\Phi_{j},\Lambda(u_{j}))\in S(V_{j}^{*}(\mathbb{R}))
\]
for all $1\leq j\leq n$. Thus the lemma follows from Lemma \ref{lem:one_dim_case}.
\end{proof}
The following proposition is obvious from Lemma \ref{lem:reg_u}.
\begin{prop}
For all $\mathbb{B}\in C_{n}(V)$ and $\Phi\in\mathcal{R}(V,\mathbb{B})$,
we have
\[
F(\Phi,\mathbb{B})\in\mathcal{S}(V^{*}(\mathbb{R})).
\]

\end{prop}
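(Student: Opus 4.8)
The plan is to reduce everything to Lemma~\ref{lem:reg_u} by using the linearity of $F$ in its argument together with the fact that $\mathcal{S}(V^{*}(\mathbb{R}))$ is a $\mathbb{C}$-vector space. First I would write $\mathbb{B}=\sum_{\Lambda\in A}c(\Lambda)\Lambda$ for the finite set $A$ of distinct cones and the function $c\colon A\to\mathbb{Z}\setminus\{0\}$ appearing in the definition of $X(\mathbb{B})$. By the definition of $\mathfrak{C}$, every non-simple cone $\Lambda\in A$ has $\mathfrak{C}(\Lambda)=0$, hence $\Phi\mathfrak{C}(\Lambda)=0$ and $F(\Phi\mathfrak{C}(\Lambda))=0$; so those terms may be discarded and we get
\[
\Phi\mathfrak{C}(\mathbb{B})=\sum_{\Lambda\in X(\mathbb{B})}c(\Lambda)\,\Phi\mathfrak{C}(\Lambda)\in\mathscr{U}(V),
\]
and correspondingly $F(\Phi,\mathbb{B})=\sum_{\Lambda\in X(\mathbb{B})}c(\Lambda)\,F(\Phi,\Lambda)$ by linearity of $F$.

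Next, for each simple cone $\Lambda=\Lambda(u_{1},\dots,u_{n})\in X(\mathbb{B})$, the vectors $u_{1},\dots,u_{n}$ are by definition linearly independent, hence form a basis of $V$; and by Definition~\ref{Def_regular_fan} the hypothesis $\Phi\in\mathcal{R}(V,\mathbb{B})$ gives $\Phi\in\mathcal{R}(V;u_{1},\dots,u_{n})$. Lemma~\ref{lem:reg_u} then yields $F(\Phi,\Lambda)=F(\Phi,\Lambda(u_{1},\dots,u_{n}))\in\mathcal{S}(V^{*}(\mathbb{R}))$. Since the displayed sum is finite and $\mathcal{S}(V^{*}(\mathbb{R}))$ is closed under $\mathbb{C}$-linear combinations, I conclude $F(\Phi,\mathbb{B})\in\mathcal{S}(V^{*}(\mathbb{R}))$, which is the assertion.

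There is essentially no obstacle here; the argument is purely a matter of assembling pieces already in place, which is why the statement is labelled obvious from Lemma~\ref{lem:reg_u}. The one point deserving a moment's care is the bookkeeping of orientations: Lemma~\ref{lem:reg_u} is phrased for an ordered basis $(u_{1},\dots,u_{n})$, whereas $X(\mathbb{B})$ is a set of (unordered) cones. But $\mathfrak{C}(\Lambda(u_{1},\dots,u_{n}))$ already incorporates the sign $r(u_{1},\dots,u_{n})$, so any reordering of the $u_{j}$ merely multiplies $F(\Phi,\Lambda)$ by $\pm1$ and never leaves $\mathcal{S}(V^{*}(\mathbb{R}))$; hence the choice of ordering is immaterial and the proof goes through as stated.
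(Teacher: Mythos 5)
Your argument is correct and is exactly what the paper intends: it simply declares the proposition obvious from Lemma \ref{lem:reg_u}, and your proof spells out that reduction (linearity of $F$, vanishing of $\mathfrak{C}$ on non-simple cones, and Lemma \ref{lem:reg_u} applied to each $\Lambda\in X(\mathbb{B})$). Your closing remark about orderings is harmless but unnecessary, since a cone $\Lambda(u_{1},\dots,u_{n})$ is already an ordered symbol in the paper's formalism.
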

Now we can define $L_{\sigma}(\bm{s},\Phi,\mathbb{B})$. For a map
$\sigma:\{1,\dots,r\}\to\{0,1\}$, $g\in\{\pm1\}^{n}$,$\bm{s}\in\mathbb{C}^{n}$,
$\mathbb{B}\in C_{n}(V)$, and $\Phi\in\mathcal{R}(V,\mathbb{B})$,
we set
\[
L(g,\bm{s},\Phi,\mathbb{B},\rho)=L(g,\bm{s},\Phi\mathfrak{C}(\mathbb{B}),\rho)
\]
and
\[
L_{\sigma}(\bm{s},\Phi,\mathbb{B},\rho)=L_{\sigma}(\bm{s},\Phi\mathfrak{C}(\mathbb{B}),\rho).
\]

\begin{lem}
\label{lem:sign_change_Lfunc}Let $(u_{1},\dots,u_{n})$ be a basis
of $V$. Then for $e_{1},\dots,e_{n}\in\{\pm1\}$ and $\Phi\in\mathcal{R}(V;u_{1},\dots,u_{n})$,
we have
\[
F(\Phi,\Lambda(u_{1},\dots,u_{n}))=F(\Phi,\Lambda(e_{1}u_{1},\dots,e_{n}u_{n})).
\]
\end{lem}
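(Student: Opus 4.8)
The plan is to reduce to a single sign change and then compare the underlying characteristic functions of the cones directly.

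First I would note that replacing a generator $u_j$ of a simple cone by $-u_j$ leaves Conditions $P_1$ and $P_2$ unchanged: the substitution $x_j\mapsto -x_j$ is a measure-preserving automorphism of $\mathbb{A}_f$ and does not affect whether $0\in\{x_1,\dots,x_n\}$. Hence $\mathcal{R}(V;u_1,\dots,u_n)=\mathcal{R}(V;e_1u_1,\dots,e_nu_n)$ for all $(e_\mu)_{\mu=1}^n\in\{\pm1\}^n$. Since $F$ is $\mathbb{C}$-linear and an arbitrary sign vector is reached from $(1,\dots,1)$ by finitely many single sign changes, it suffices to prove, for each $k$ and each $\Phi\in\mathcal{R}(V;u_1,\dots,u_n)$,
\[
F(\Phi,\Lambda(u_1,\dots,u_{k-1},-u_k,u_{k+1},\dots,u_n))=F(\Phi,\Lambda(u_1,\dots,u_n));
\]
one then inducts on the number of $-1$'s, applying this step to the intermediate basis each time (legitimate by the invariance of $\mathcal{R}$ just recorded).

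To treat the single flip, fix $k$, let $W$ be the $\mathbb{Q}$-span of $\{u_\mu:\mu\ne k\}$, so that $V=W\oplus\mathbb{Q}u_k$, and let $\pi_W\colon V\to W$, $\pi_k\colon V\to\mathbb{Q}$ be the coordinate projections; put $C_W=\{\sum_{\mu\ne k}t_\mu u_\mu\mid t_\mu\in\mathbb{Q}_{>0}\}\subset W$. Partitioning according to the sign of $\pi_k(v)\in\mathbb{Q}$ gives the identity of functions on $V$
\[
\bm{1}_{\pi_W^{-1}(C_W)}=\bm{1}_{C(u_1,\dots,u_n)}+\bm{1}_{C_W}+\bm{1}_{C(u_1,\dots,-u_k,\dots,u_n)},
\]
where $\bm{1}_{C_W}$ means the indicator of $C_W\subset W\subset V$. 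Multiplying by $\Phi$, the term $\Phi\,\bm{1}_{C_W}$ vanishes, since it is supported on $W$ and Condition $P_1$, applied with the $k$-th coordinate equal to $0$, forces $\Phi$ to vanish on all of $W(\mathbb{A}_f)\supset W$. By the orientation axiom $r(u_1,\dots,-u_k,\dots,u_n)=-r(u_1,\dots,u_n)$; multiplying the previous identity by $r(u_1,\dots,-u_k,\dots,u_n)$ and using $\mathfrak{C}(\Lambda(\cdots))=r(\cdots)\bm{1}_{C(\cdots)}$ yields, as an identity in $\mathscr{U}(V)$,
\[
\Phi\,\mathfrak{C}(\Lambda(u_1,\dots,-u_k,\dots,u_n))=\Phi\,\mathfrak{C}(\Lambda(u_1,\dots,u_n))-r(u_1,\dots,u_n)\,\Psi,\qquad \Psi(v):=\Phi(v)\,\bm{1}_{\pi_W^{-1}(C_W)}(v).
\]

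Finally I would check $F(\Psi)=0$. Choose $N>0$ with $\Phi(v+Nu_k)=\Phi(v)$ for all $v\in V$; since $\pi_W(v+Nu_k)=\pi_W(v)$, also $\Psi(v+Nu_k)=\Psi(v)$, i.e. $(1-\Delta_{Nu_k})\Psi=0$. As $1-\Delta_{Nu_k}$ is a nonzero element of $X(V)$ (its image under the injective $P$ is $1-e(iN\langle\,\cdot\,,u_k\rangle)\ne0$), this shows $\Psi\in\mathscr{U}(V)$ with $F(\Psi)=F(0)/P(1-\Delta_{Nu_k})=0$. Applying the linear map $F$ to the displayed identity then gives $F(\Phi,\Lambda(u_1,\dots,-u_k,\dots,u_n))=F(\Phi,\Lambda(u_1,\dots,u_n))$, completing the induction. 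The one genuinely delicate point is the vanishing of the boundary term $\Phi\,\bm{1}_{C_W}$, i.e. invoking Condition $P_1$ so that $\Phi$ annihilates the hyperplane $W$ spanned by the unflipped generators; the orientation bookkeeping and $F(\Psi)=0$ are routine. An alternative is to reduce to $\dim V=1$ via Lemmas \ref{Lem:regulartiy_from_tensor} and \ref{lem:F_tensor} after choosing a product orientation compatible with the fixed one, and then run the same one-variable computation; the direct argument above seems cleaner for the orientation bookkeeping.
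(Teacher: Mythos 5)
Your proof is correct and is essentially the paper's own argument: the paper sets $f=\Phi\,\mathfrak{C}(\Lambda(u_{1},\dots,u_{n})-\Lambda(-u_{1},\dots,u_{n}))$ and kills it via $(\Delta_{ku_{1}}-1)f=0$ and the injectivity of $P$, and your $r(u_{1},\dots,u_{n})\Psi$ is exactly this $f$ once Condition $P_{1}$ has removed the hyperplane $\{t_{k}=0\}$ (a step the paper leaves implicit in asserting the periodicity of $f$). Your explicit remarks that $\mathcal{R}(V;u_{1},\dots,u_{n})$ is invariant under sign changes of the generators, and that $P_{1}$ is what makes $f$ genuinely $Nu_{k}$-periodic, are worthwhile clarifications but not a different proof.
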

\begin{proof}
It is enough to prove in the case where $e_{1}=-1$ and $e_{j}=1$
for all $2\leq j\leq n$. We set $f=\Phi\mathfrak{C}(\Lambda(u_{1},\dots,u_{n})-\Lambda(-u_{1},\dots,u_{n}))$.
Let $k$ be a nonzero positive integer such that $\Phi(v+ku_{1})=\Phi(v)$
for all $v\in V$. Then we have
\[
(\Delta_{ku_{1}}-1)f=0.
\]
Since $P(\Delta_{ku_{1}}-1)\neq0$, we have $F(f)=0$, and the lemma
is proved.
\end{proof}

\subsection{\label{sub:Functional-equation-of-lattice}The functional equation
of $L_{\sigma}(\bm{s},\Phi,\mathbb{B},\rho)$}

In this section we show the functional equation of $L_{\sigma}(s,\Phi,\mathbb{B},\rho)$.
For this purpose, we need the Fourier transform of $F(\Phi,\mathbb{B})$.
We consider compatible measures $dv_{\infty}$ on $V(\mathbb{R})$
and $dv_{f}$ on $V(\mathbb{A}_{f})$, i.e., 
\[
\int_{V(\mathbb{A})/V}dv_{\infty}dv_{f}=1.
\]

\begin{prop}
We have
\[
\hat{F}(\Phi,\mathbb{B})=i^{-n}F(\hat{\Phi},\varphi(\mathbb{B})).
\]
\end{prop}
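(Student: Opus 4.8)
The plan is to reduce, by linearity and a tensor decomposition, to the one-dimensional case, and there to carry out a contour-integral (residue) computation based on the explicit formula of Lemma~\ref{lem:one_dim_case}. Both sides are additive in $\mathbb{B}$ (here $\varphi$, $\mathfrak{C}$, $f\mapsto F(\Phi f)$ and the Fourier transform are all linear), so it suffices to treat a single cone $\mathbb{B}=\Lambda(u_1,\dots,u_n)$; if this cone is not simple then $\mathfrak{C}(\mathbb{B})=0$ and $\varphi(\mathbb{B})=0$, so both sides vanish, and we may assume $u_1,\dots,u_n$ is a basis. (For $\hat F(\Phi,\mathbb{B})$ to be an honest Schwartz--Bruhat function I take $\Phi\in\mathcal{R}(V;u_1,\dots,u_n)$; then, applying Lemma~\ref{Rem:p1p2} twice together with $\varphi\circ\varphi=\mathrm{id}$ on simple cones and $\hat{\hat\Phi}=\Phi(-\,\cdot\,)$, one gets $\hat\Phi\in\mathcal{R}(V^{*};v_1,\dots,v_n)$ for $v_1,\dots,v_n$ the dual basis, so the right-hand side is also Schwartz--Bruhat by the preceding proposition.)

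Write $V=V_1\oplus\dots\oplus V_n$ with $V_j=\mathbb{Q}u_j$, and choose the compatible measures on $V(\mathbb{R})$ and $V(\mathbb{A}_f)$ to be products of compatible measures on the factors (relative to $\bigoplus_j\mathbb{Z}u_j$); then the Fourier transforms on $V(\mathbb{A}_f)$ and on $V^{*}(\mathbb{R})$ both factor as tensor products over $j$, and $r=r_1\times\dots\times r_n$ as in Lemma~\ref{lem:F_tensor}. Since $\mathcal{R}(V;u_1,\dots,u_n)=\bigotimes_j\mathcal{R}(V_j;u_j)$ by Lemma~\ref{Lem:regulartiy_from_tensor}, I may take $\Phi=\bigotimes_j\Phi_j$. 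By Lemmas~\ref{lem:F-tensor-f} and~\ref{lem:F_tensor} one has $F(\Phi,\Lambda(u_1,\dots,u_n))=\bigotimes_j F(\Phi_j,\Lambda(u_j))$, hence $\hat F(\Phi,\mathbb{B})=\bigotimes_j\hat F(\Phi_j,\Lambda(u_j))$; likewise $\hat\Phi=\bigotimes_j\hat\Phi_j$, $\varphi(\Lambda(u_1,\dots,u_n))=\Lambda(v_1,\dots,v_n)$ with $v_j$ a basis of $V_j^{*}$ and $\varphi(\Lambda(u_j))=\Lambda(v_j)$, so $F(\hat\Phi,\varphi(\mathbb{B}))=\bigotimes_j F(\hat\Phi_j,\varphi(\Lambda(u_j)))$. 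Thus the $n$-dimensional identity follows from the one-dimensional identity $\hat F(\Psi,\Lambda(u))=i^{-1}F(\hat\Psi,\varphi(\Lambda(u)))$, the constant $i^{-n}$ being the product of $n$ copies of $i^{-1}$.

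It remains to prove the one-dimensional case. Fix $w\in V\setminus\{0\}$ and $N\ge 1$ as in Lemma~\ref{lem:one_dim_case}, so $\Psi|_{\mathbb{Q}w}$ is supported on $\mathbb{Z}w$, is $N\mathbb{Z}w$-periodic there, and satisfies $\Psi(0)=0$ and $\sum_{k=1}^{N}\Psi(kw)=0$; writing a point of $V^{*}(\mathbb{R})$ as $\tau w^{\vee}$ with $\langle w^{\vee},w\rangle=1$ and $z=2\pi\tau$, that lemma gives
\[
F(\Psi,\Lambda(u))(\tau w^{\vee})=\frac{1}{1-e^{-Nz}}\sum_{k=1}^{N-1}\Psi(kw)e^{-kz}.
\]
As a function of the complex variable $\tau$ this is meromorphic, rapidly decaying as $\mathrm{Re}(\tau)\to\pm\infty$ in horizontal strips, with at most simple poles at $\tau\in\tfrac{i}{N}\mathbb{Z}$ (those at $\tau\in i\mathbb{Z}$ being removable by $\sum_k\Psi(kw)=0$). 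To compute $\hat F(\Psi,\Lambda(u))(\eta w)=\int_{V^{*}(\mathbb{R})}F(\Psi,\Lambda(u))(\tau w^{\vee})\,\psi(\tau\eta)\,d\tau$, I would shift the contour from $\mathbb{R}$ into the upper or lower half-plane (according to the sign of $\eta$) along rectangles with horizontal sides midway between consecutive poles; the side contributions vanish by the decay, and the residue theorem evaluates the integral as a series over the residues of $(1-e^{-Nz})^{-1}$ at $\tau=im/N$, each equal to $\tfrac{1}{2\pi N}$. This yields a series whose $m$-th coefficient is $\tfrac{1}{N}\sum_k\Psi(kw)e(-km/N)$, which by Poisson summation on the finite group $\tfrac{1}{N}\mathbb{Z}w/\mathbb{Z}w$ is exactly $\hat\Psi$ evaluated at $\tfrac{m}{N}w^{\vee}$; summing the resulting geometric series in $m$ (using the $N$-periodicity of these coefficients and $\hat\Psi(0)=0$) reassembles everything into $F(\hat\Psi,\Lambda(w^{\vee}))=F(\hat\Psi,\varphi(\Lambda(u)))$. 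The $2\pi i$ of the residue theorem cancels the $2\pi N$ from differentiating $1-e^{-Nz}$ (the $N$ merging into the coefficient), the reindexing $k\mapsto N-k$ forced by the two expansions of $(1-e^{\pm Nz})^{-1}$ contributes a sign, and the self-dual normalization of the measures contributes $1$, so the net constant relating the two sides is exactly $i^{-1}$.

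The main obstacle is this last one-dimensional computation: the shape of the answer is dictated by the residue calculus, but making the constant come out to precisely $i^{-1}$ (with no stray powers of $2\pi$, of $N$, or volume factors) requires carefully pinning down the sign in the additive character $\psi(x)=e(-x)$ and in the discrete Fourier coefficients, the identification $V^{**}(\mathbb{R})=V(\mathbb{R})$, and the direction of the contour shift on the two exponentially decaying tails. Everything in the higher-dimensional reduction is then formal, packaged by the tensor-product lemmas above.
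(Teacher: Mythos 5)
Your proposal is correct and follows essentially the same route as the paper: reduce to the one-dimensional case via the tensor-product lemmas (Lemmas \ref{Lem:regulartiy_from_tensor}, \ref{lem:F-tensor-f}, \ref{lem:F_tensor}), then compute the Fourier transform of the explicit rational expression from Lemma \ref{lem:one_dim_case} by the residue theorem. The only (immaterial) difference is in organizing the contour argument: you push the contour to infinity and resum a geometric series of residues, whereas the paper subtracts a single shifted contour and uses the quasi-periodicity $\phi(t-i)=e(ik')\phi(t)$ to capture just the $N-1$ residues in one period, which sidesteps the case split on the sign of $\eta$.
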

\begin{proof}
By Lemma \ref{lem:F_tensor}, it is enough to consider the case where
$n=1$ and $\mathbb{B}=\Lambda(u)$ with $u\in V$. Since the statements
of the proposition does not depend on the choice of the orientation
$r$, we assume that $r(u)=1$. Take $w\in V\setminus\{0\}$ and a
positive integer $N$ which satisfy the following conditions:
\begin{enumerate}
\item $r(u)=r(w)$,
\item $\Phi(kw)=0$ for all $k\in\mathbb{Q}\setminus\mathbb{Z}$,
\item $\Phi((k+N)w)=\Phi(kw)$ for all $k\in\mathbb{Z}$.
\end{enumerate}
Let $\hat{w}$ be a unique element of $V^{*}$ determined by $\left\langle w,\hat{w}\right\rangle =1$.
Then $\hat{\Phi}$ satisfies the following conditions:
\begin{enumerate}
\item $\hat{\Phi}(\frac{m}{N}\hat{w})=0$ for all $m\in\mathbb{Q}\setminus\mathbb{Z}$,
\item $\hat{\Phi}(\frac{m+N}{N}\hat{w})=\hat{\Phi}(\frac{m}{N}\hat{w})$
for all $m\in\mathbb{Z}$.
\end{enumerate}
Therefore we have 
\[
F(\hat{\Phi},\varphi(\mathbb{B}))(k'w)=\frac{1}{1-e(ik')}\sum_{m=1}^{N-1}\hat{\Phi}(\frac{m}{N}\hat{w})e(i\frac{k'm}{N})
\]
for $k'\in\mathbb{R}$. Since the statement of the proposition does
not depend on the choice of compatible measures, we may assume that
\[
\mu(V(\mathbb{R})/\mathbb{L})=\mu(\mathbb{L}\otimes_{\mathbb{Z}}\prod_{p}\mathbb{Z}_{p})=1
\]
where $\mathbb{L}=w\mathbb{Z}\subset V$. Let us compute $\hat{F}(\Phi,\mathbb{B})$.
We have
\[
F(\Phi,\mathbb{B})(m'\hat{w})=\frac{1}{1-e(iNm')}\sum_{k=1}^{N-1}\Phi(kw)e(ikm')
\]
for $m'\in\mathbb{R}$, and we have 
\[
\hat{F}(\Phi,\mathbb{B})(k'w)=\int_{-\infty}^{\infty}\phi(m')dm'
\]
for $k'\in\mathbb{R}$, where 
\[
\phi(m')=F(\Phi,\mathbb{B})(m'\hat{w})e(-k'm').
\]
We compute this integral by the residue theorem. Note that $\phi(t)$
has poles at 
\[
t\in(\frac{i}{N}\mathbb{Z})\setminus(i\mathbb{Z}).
\]
For $m\in\mathbb{Z}$, let $h(m)$ be the residue of $\phi(t)$ at
$t=im/N$. Then $h(m)$ is given by
\begin{align*}
h(m) & =\frac{1}{2\pi N}\sum_{k=1}^{N-1}\Phi(kw)e(-\frac{km}{N}-i\frac{k'm}{N})\\
 & =\frac{1}{2\pi}\hat{\Phi}(-\frac{m}{N}\hat{w})e(-i\frac{k'm}{N}).
\end{align*}
Since $\phi(t-i)=e(ik')\phi(t)$ for all $t\in\mathbb{C}$, if $k'\neq0$
then we have
\begin{align*}
\hat{F}(\Phi,\mathbb{B})(k'w) & =\frac{1}{1-e(ik')}\left(\int_{-\infty}^{\infty}\phi(t)dt-\int_{-\infty-i}^{\infty-i}\phi(t)dt\right)\\
 & =\frac{-2\pi i}{1-e(ik')}\sum_{m=1}^{N-1}h(-m)\\
 & =\frac{-i}{1-e(ik')}\sum_{m=1}^{N-1}\hat{\Phi}(\frac{m}{N}\hat{w})e(i\frac{k'm}{N})\\
 & =-iF(\Phi,\varphi(\mathbb{B}))(k'w).
\end{align*}
In the case $v=0$, the proposition follows from the continuity of
$\hat{F}(\Phi,\mathbb{B})$.
\end{proof}
By this lemma we have the following proposition.
\begin{prop}
\label{prop:Func_Eq_ShintaniL}We have
\[
\Gamma_{\sigma}(\bm{s})L_{\sigma}(\bm{s},\Phi,\mathbb{B},\rho)=i_{\sigma}^{-1}\Gamma_{\sigma}(\bm{1}-\bm{s})L_{\sigma}(\bm{1}-\bm{s},\hat{\Phi},\varphi(\mathbb{B}),\rho^{*})
\]
and
\[
\hat{L}_{\sigma}(\bm{s},\Phi,\mathbb{B},\rho)=i_{\sigma}^{-1}\hat{L}(\bm{1}-\bm{s},\hat{\Phi},\varphi(\mathbb{B}),\rho^{*}).
\]
Here, we consider the Haar measure on $V(\mathbb{A}_{f})$ as compatible
with the pull-back of the Lebesgue measure on $\mathbb{R}^{n}$ by
$\rho$. \end{prop}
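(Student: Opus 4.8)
The plan is to deduce the proposition by specializing the general functional equation for $\hat L_\sigma$ obtained at the end of Section~\ref{sub:Shintani-L-function-of-lattice} from Tate's local functional equation (Lemma~\ref{lem:Tate}), namely $\hat L_\sigma(\bm{s},f,\rho)=i_{1-\sigma}\hat L_\sigma(\bm{1}-\bm{s},\bar f,\rho^*)$, to the function $f=\Phi\mathfrak{C}(\mathbb{B})$, using the Fourier identity $\widehat{F}(\Phi,\mathbb{B})=i^{-n}F(\widehat{\Phi},\varphi(\mathbb{B}))$ just proved to recognize the partner function $\bar f$. No new analytic input is required; the content is entirely in Lemma~\ref{lem:Tate} and that Fourier identity, and what remains is a careful tracking of the unit constants and of the dual data.

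Concretely, I would take $f=\Phi\mathfrak{C}(\mathbb{B})\in\mathscr{U}(V)$ and set $\bar f=i^{-n}\,\widehat{\Phi}\,\mathfrak{C}(\varphi(\mathbb{B}))$, which lies in $\mathscr{U}(V^*)$ by the lemma guaranteeing $\Phi'\mathfrak{C}(\mathbb{B}')\in\mathscr{U}(\,\cdot\,)$ applied over $V^*$. Since $F$ is $\mathbb{C}$-linear, $F(\bar f)=i^{-n}F(\widehat{\Phi},\varphi(\mathbb{B}))=\widehat{F}(\Phi,\mathbb{B})$, so $\bar f$ is precisely an element of $\mathscr{U}(V^*)$ whose $F$-image is the Fourier transform of $F(f)=F(\Phi,\mathbb{B})$; the hypotheses of the general functional equation hold because $\Phi\in\mathcal{R}(V,\mathbb{B})$ forces $F(\Phi,\mathbb{B})\in\mathcal{S}(V^*(\mathbb{R}))$. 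Feeding this in, and using linearity of $L_\sigma$ in its function argument to pull the scalar $i^{-n}$ out of $\bar f$, I get
\[
\hat L_\sigma(\bm{s},\Phi,\mathbb{B},\rho)=i_{1-\sigma}\,\hat L_\sigma(\bm{1}-\bm{s},\bar f,\rho^*)=i_{1-\sigma}\,i^{-n}\,\hat L_\sigma(\bm{1}-\bm{s},\widehat{\Phi},\varphi(\mathbb{B}),\rho^*).
\]
Since $i_{1-\sigma}\,i^{-n}=\prod_{\mu=1}^{n}i^{1-\sigma(\mu)-1}=\prod_{\mu=1}^{n}i^{-\sigma(\mu)}=i_\sigma^{-1}$, this is the second displayed identity (one uses here $\hat L_\sigma(\bm{s},\Phi,\mathbb{B},\rho)=\hat L_\sigma(\bm{s},\Phi\mathfrak{C}(\mathbb{B}),\rho)$, immediate from the definitions); the first identity is the same statement after substituting $\hat L_\sigma(\bm{s},\cdot)=\Gamma_\sigma(\bm{s})L_\sigma(\bm{s},\cdot)$ on both sides. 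As both sides are meromorphic in $\bm{s}$, the identity propagates from the region of convergence to all of $\mathbb{C}^n$ by analytic continuation.

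Two bookkeeping points need attention. First, to name the partner $L_\sigma(\bm{1}-\bm{s},\widehat{\Phi},\varphi(\mathbb{B}),\rho^*)$ in the paper's notation one wants $\widehat{\Phi}\in\mathcal{R}(V^*,\varphi(\mathbb{B}))$, which follows cone by cone: Lemma~\ref{Rem:p1p2} gives the passage $P_2\Rightarrow P_1$, and its mirror (apply Lemma~\ref{Rem:p1p2} to $\widehat{\Phi}$ and $\varphi(\Lambda)$, then invoke Fourier inversion, under which $\widehat{\widehat{\Phi}}$ is $\Phi$ up to $x\mapsto -x$ and $\varphi\circ\varphi=\mathrm{id}$) gives $P_1\Rightarrow P_2$; one also uses $X(\varphi(\mathbb{B}))=\varphi(X(\mathbb{B}))$, valid because $\varphi$ annihilates non-simple cones and is a bijection on simple ones. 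Second, the measures must be aligned: the Fourier identity was stated for compatible $dv_\infty$ on $V(\mathbb{R})$ and $dv_f$ on $V(\mathbb{A}_{f})$, while the general functional equation was derived with $dv_\infty$ the $\rho$-pull-back of Lebesgue measure, so one takes $dv_f$ compatible with that — exactly the normalization in the statement. I expect no genuine obstacle here; the only things to get right are the consistent accounting of $i_\sigma$, $i_{1-\sigma}$, $i^{-n}$ and the consistent appearance of $\rho$ versus $\rho^*$ and $\Phi$ versus $\widehat{\Phi}$, $\mathbb{B}$ versus $\varphi(\mathbb{B})$ as one passes through the two input results.
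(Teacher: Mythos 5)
Your proposal is correct and follows essentially the same route as the paper: the paper's proof is the single remark that the proposition follows from the Fourier identity $\hat{F}(\Phi,\mathbb{B})=i^{-n}F(\hat{\Phi},\varphi(\mathbb{B}))$ combined with the Tate-based functional equation $\hat{L}_{\sigma}(\bm{s},f,\rho)=i_{1-\sigma}\hat{L}_{\sigma}(\bm{1}-\bm{s},\bar{f},\rho^{*})$, which is exactly your argument. Your additional bookkeeping (the constant $i_{1-\sigma}\,i^{-n}=i_{\sigma}^{-1}$, the regularity of $\hat{\Phi}$ with respect to $\varphi(\mathbb{B})$, and the compatibility of measures) only makes explicit what the paper leaves implicit.
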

\begin{example}
For $A=\left(a_{\nu\mu}\right)_{1\leq\nu,\mu\leq n}\in GL_{n}\left(\mathbb{R}\right)$
and $\bm{x},\bm{y}\in\{t\in\mathbb{Q}\mid0<t<1\}^{n}$, we define
$\Phi_{\bm{x},\bm{y}}\in\mathcal{S}(\mathbb{A}_{f}^{n})$ by
\[
\Phi_{\bm{x},\bm{y}}(v)=\begin{cases}
\psi(\bm{y}\cdot\bm{m}) & \mbox{there exists }\bm{m}\in\hat{\mathbb{Z}}^{n}\mbox{ such that }v=\bm{x}+\bm{m}\\
0 & \mbox{otherwise}
\end{cases}
\]
and $\rho_{A}:\mathbb{Q}^{n}\to\mathbb{R}^{n}$ by 
\[
\rho_{A}(v)=Av.
\]
We define a cone $\Lambda\in C_{n}(\mathbb{Q}^{n})$ by $\Lambda=\Lambda(u_{1},\dots,u_{n})$
where $u_{1}=(1,0,\dots,0),\dots,u_{n}=(0,\dots,0,1)$. Then $L_{\sigma}(\bm{s},A,\bm{x},\bm{y})=2^{-n}L_{\sigma}(\bm{s},\Phi,\Lambda,\rho_{A})$
where the left side is the Shintani L-function defined in \cite{SatoHiroseFunctional}.
Since the inverse Fourier transform of $\Phi_{\bm{x},\bm{y}}$ are
given by $e(-\bm{x}\cdot\bm{y})\Phi_{\bm{y},\bm{1}-\bm{x}}$, we have
\[
\hat{L}_{\sigma}(\bm{s},A,\bm{x},\bm{y})=i_{\sigma}e(-\bm{x}\cdot\bm{y})\hat{L}_{\sigma}(\bm{1}-\bm{s},(A^{-1})^{t},\bm{y},\bm{1}-\bm{x}).
\]

\end{example}

\section{\label{sec:Fan}The theory of fans}

In this section, we give a theory of fans. In Section \ref{sub:Definition-of-fan}--\ref{sub:Dual-fan-of-fundamental-domain-fan},
we deal with the abstract theory of fans. In Section \ref{sub:Indicator-function-of-fan},
we prove some property of the characteristic functions of fans. The
hardest part in this section is the proof of Theorem \ref{thm:dual_of_fandamental_domain_is_fandamental_domain}.

For $1\leq i\leq n$, the coface map $d^{i}:\{1,\dots,n-1\}\rightarrow\{1,\dots,n\}$
is defined by
\[
d^{i}(j)=\begin{cases}
j & 1\leq j\leq i-1\\
j+1 & i\leq j\leq n.
\end{cases}
\]
For $0\leq n$, $\mathfrak{S}_{n}$ is the symmetric group of $\{1,\dots,n\}$.
For $m,i\in\{1,\dots,n\}$ and $\sigma\in\mathfrak{S}_{n-1}$, we
define $u(m,i,\sigma)\in\mathfrak{S}_{n}$ by
\[
u(m,i,\sigma)(k)=\begin{cases}
i & k=m\\
d^{i}(\sigma(j)) & 1\leq k<m\\
d^{i}(\sigma(j-1)) & m<k\leq n.
\end{cases}
\]
Note that
\[
\mathfrak{S}_{n}=\{u(m,i,\sigma)\mid1\leq i\leq n,\ \sigma\in\mathfrak{S}_{n-1}\}
\]
for all $1\leq m\leq n$. We have
\[
\mathrm{sgn}(u(m,i,\sigma))=(-1)^{m+i}\mathrm{sgn}(\sigma)
\]
for all $m,i\in\{1,\dots,n\}$ and $\sigma\in\mathfrak{S}_{n-1}$. 

For a partially ordered set (poset, for short) $Y$, we use the following
notation. 
\begin{align*}
S_{k}(Y) & =\{f:\{1,\dots,k\}\to Y\mid f(i)<f(i+1)\mbox{ for all }1\leq i\leq k-1\}\ \ \ (k\geq0),\\
P(Y) & =\{U\subset Y\mid U\neq\emptyset,\ U\mbox{ is totally orderd}\}.
\end{align*}
We regard $P(Y)$ as a poset ordered by inclusion. Sometimes we write
$f$ as $\left[f(1),\dots,f(k)\right]$ for $f\in S_{k}(Y)$ especially
in examples.

\subsection{\label{sub:Definition-of-fan}Definition of fans}

Let $X$ be a non-empty set. We define $C_{m}(X)$ as a $\mathbb{Z}$-module
generated by the symbols
\[
\Lambda(a_{1},\dots,a_{m})\;\;\;\;\;(a_{1},\dots,a_{m}\in X).
\]
We call the symbol $\Lambda(a_{1},\dots,a_{m})$ a \emph{cone} and
call an element of $C_{m}(X)$ a \emph{fan}. We understand that $C_{0}(X)\simeq\mathbb{Z}$.
Sometimes we use the notation $\Lambda(a_{j})_{j=1}^{m}$ for $\Lambda(a_{1},\dots,a_{m})$.
For non-negative integers $m$ and $n$, we identify $C_{m}(X)\otimes C_{n}(X)$
and $C_{m+n}(X)$ by
\[
\Lambda(a_{1},\dots,a_{m})\otimes\Lambda(b_{1},\dots,b_{n})=\Lambda(a_{1},\dots,a_{m},b_{1},\dots,b_{n}).
\]
We define the boundary homomorphism $\partial_{m}:C_{m}(X)\rightarrow C_{m-1}(X)$
by
\[
\partial_{m}\Lambda(a_{1},\dots,a_{m})=\sum_{i=1}^{m}(-1)^{i+1}\Lambda(a_{1},\dots,\hat{a_{i}},\dots,a_{m}).
\]
Then we get a chain complex
\begin{equation}
\cdots\rightarrow C_{2}(X)\rightarrow C_{1}(X)\rightarrow\mathbb{Z}\rightarrow0.\label{eq:resolution}
\end{equation}
For $x\in C_{n}(X)$, if $\partial_{n}x=0$ then $\partial_{n+1}(\Lambda(a)\otimes x)=x-\Lambda(a)\otimes(\partial_{n}x)=x$
for any $a\in X$. Therefore the sequence (\ref{eq:resolution}) is
exact. 

Let us introduce the notion of a conical system. Roughly speaking,
conical system is a object which can be treated as a set of cones.
\begin{defn}
Let $Y$ be a poset. A map $\mathcal{F}:\bigcup_{k=0}^{\infty}S_{k}(Y)\to\bigcup_{k=0}^{\infty}C_{k}(X)$
is said to be a \emph{conical system} on $Y$ if it satisfies the
following conditions.
\begin{enumerate}
\item $\mathcal{F}(S_{k}(Y))\subset C_{k}(X)$ for all $k\geq0$. 
\item $\partial_{k}\mathcal{F}(f)=\sum_{j=1}^{k}(-1)^{j+1}F(f\circ d^{j})$
for all $f\in S_{k}(Y)$.
\item $\mathcal{F}(\emptyset)=1$ where $\emptyset$ is the unique element
of $S_{0}(Y)$.
\end{enumerate}
\end{defn}
Let $Y$ be a poset and $h:Y\to X$ a map. We define 
\[
\mathcal{S}_{h}:\bigcup_{k=0}^{\infty}S_{k}(Y)\to\bigcup_{k=0}^{\infty}C_{k}(X)
\]
 by
\[
\mathcal{S}_{h}(f)=\Lambda(h(f(1)),\dots,h(f(m)))
\]
for $f\in S_{m}(Y)$. Then $\mathcal{S}_{h}$ is a conical system
on $Y$. We say that $\mathcal{S}_{h}$ is the \emph{standard conical
system} attached to $h$. Let us see another example of a conical
system.
\begin{example}
Let $Y=\{1,2,3\}$ be an ordered set. Let $x_{1},x_{2},x_{3},x_{12},x_{13},x_{23},x_{123}\in X$.
We define a map $\mathcal{F}:\bigcup_{k=0}^{\infty}S_{k}(Y)\to\bigcup_{k=0}^{\infty}C_{k}(X)$
by
\begin{align*}
\mathcal{F}(\emptyset) & =1\\
\mathcal{F}(\left[1\right]) & =\Lambda(x_{1})\\
\mathcal{F}(\left[2\right]) & =\Lambda(x_{2})\\
\mathcal{F}(\left[3\right]) & =\Lambda(x_{3})\\
\mathcal{F}(\left[1,2\right]) & =\Lambda(x_{1},x_{12})-\Lambda(x_{2},x_{12})\\
\mathcal{F}(\left[1,3\right]) & =\Lambda(x_{1},x_{13})-\Lambda(x_{3},x_{13})\\
\mathcal{F}(\left[2,3\right]) & =\Lambda(x_{2},x_{23})-\Lambda(x_{3},x_{23})\\
\mathcal{F}(\left[1,2,3\right]) & =\Lambda(x_{1},x_{12},x_{123})+\Lambda(x_{2},x_{23},x_{123})+\Lambda(x_{3},x_{13},x_{123})\\
 & \ \ \ \ -\Lambda(x_{1},x_{13},x_{123})-\Lambda(x_{2},x_{12},x_{123})-\Lambda(x_{3},x_{23},x_{123}).
\end{align*}
Then $\mathcal{F}$ is a conical system on $Y$. This is a special
case of Corollary \ref{cor:T1_conic_sytem}.
\end{example}
The next lemma will be used in the proof of Lemma \ref{lem:dual_equation}.
The proof is obvious from a simple calculation.
\begin{lem}
\label{lem:fusion_product}Let $\mathcal{F}$ and $\mathcal{G}$ be
conical systems on $\{1,\dots,n\}$. We define $x\in C_{n}(X)$ by
\[
x=\sum_{k=0}^{n}\sum_{\substack{f\in S_{k}(\{1,\dots,n\})\\
g\in S_{n-k}(\{1,\dots,n\})\\
\mathrm{Im}f\cap\mathrm{Im}g=\emptyset
}
}(-1)^{k}\mathrm{sgn}(f,g)\mathcal{F}(f)\otimes\mathcal{G}(g)
\]
where $\mathrm{sgn}(f,g)$ is defined by
\[
\mathrm{sgn}(f,g)=(-1)^{t}
\]
with
\[
t=\#\{(i,j)\in\{1,\dots,k\}\times\{1,\dots,n-k\}\mid f(i)>g(j)\}.
\]
Then we have $\partial_{n}x=0$. \end{lem}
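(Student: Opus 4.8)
The plan is to re-index the defining sum over subsets of $\{1,\dots,n\}$, apply the Leibniz rule for the boundary on tensor products together with the conical-system axiom, and then cancel the resulting terms in pairs.

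First I would reindex. Since every $f\in S_k(\{1,\dots,n\})$ is strictly increasing, $\mathrm{Im}\,f$ has exactly $k$ elements; if moreover $g\in S_{n-k}(\{1,\dots,n\})$ with $\mathrm{Im}\,f\cap\mathrm{Im}\,g=\emptyset$, then $\mathrm{Im}\,f$ and $\mathrm{Im}\,g$ partition $\{1,\dots,n\}$. Hence the pairs $(f,g)$ occurring in the definition of $x$ are in bijection with subsets $S\subseteq\{1,\dots,n\}$: writing $f_S$ and $g_S$ for the increasing enumerations of $S$ and of $S^{c}$, one has
\[
x=\sum_{S\subseteq\{1,\dots,n\}}(-1)^{|S|}\varepsilon_S\,\mathcal{F}(f_S)\otimes\mathcal{G}(g_S),
\]
where $\varepsilon_S=\mathrm{sgn}(f_S,g_S)=(-1)^{\nu(S)}$ with $\nu(S)=\#\{(a,b)\in S\times S^{c}\mid a>b\}$; equivalently $\varepsilon_S$ is the sign of the permutation $(f_S(1),\dots,f_S(|S|),g_S(1),\dots,g_S(n-|S|))$ of $\{1,\dots,n\}$.

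Next I would differentiate. The identification $C_k(X)\otimes C_{n-k}(X)=C_n(X)$ gives at once the Leibniz rule $\partial_n(\alpha\otimes\beta)=(\partial\alpha)\otimes\beta+(-1)^{\deg\alpha}\alpha\otimes(\partial\beta)$. By the conical-system axiom, $\partial\mathcal{F}(f_S)=\sum_j(-1)^{j+1}\mathcal{F}(f_S\circ d^{j})$; since $f_S\circ d^{j}$ is the increasing enumeration of $S$ with its $j$-th smallest element deleted, this reads $\partial\mathcal{F}(f_S)=\sum_{s\in S}(-1)^{\mathrm{rk}_S(s)+1}\mathcal{F}(f_{S\setminus\{s\}})$, where $\mathrm{rk}_S(s)$ is the rank of $s$ in $S$; likewise $g_S\circ d^{j}$ is the increasing enumeration of $S^{c}$ with one element removed, i.e.\ $g_{S\cup\{t\}}$, so $\partial\mathcal{G}(g_S)=\sum_{t\in S^{c}}(-1)^{\mathrm{rk}_{S^{c}}(t)+1}\mathcal{G}(g_{S\cup\{t\}})$. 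Substituting, $\partial_n x=A+B$, where
\[
A=\sum_S(-1)^{|S|}\varepsilon_S\sum_{s\in S}(-1)^{\mathrm{rk}_S(s)+1}\,\mathcal{F}(f_{S\setminus\{s\}})\otimes\mathcal{G}(g_S),
\]
\[
B=\sum_S\varepsilon_S\sum_{t\in S^{c}}(-1)^{\mathrm{rk}_{S^{c}}(t)+1}\,\mathcal{F}(f_S)\otimes\mathcal{G}(g_{S\cup\{t\}}),
\]
the two signs $(-1)^{|S|}$ in $B$ cancelling because $\deg\mathcal{F}(f_S)=|S|$. Finally I would pair the terms: the map $(S,s)\mapsto(S\setminus\{s\},s)$ is a bijection from the index set $\{(S,s)\mid s\in S\}$ of $A$ onto the index set $\{(P,t)\mid t\in P^{c}\}$ of $B$, with inverse $(P,t)\mapsto(P\cup\{t\},t)$, and under it the $A$-summand of $(S,s)$ and the $B$-summand of $(S\setminus\{s\},s)$ are the same element $\mathcal{F}(f_{S\setminus\{s\}})\otimes\mathcal{G}(g_S)$ of $C_n(X)$; it remains only to check the two coefficients are negatives. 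Moving $s$ from the first block to the second changes the inversion count by $\#\{a\in S\mid a>s\}-\#\{b\in S^{c}\mid b<s\}=(|S|-\mathrm{rk}_S(s))-(s-\mathrm{rk}_S(s))=|S|-s$, so $\varepsilon_{S\setminus\{s\}}=(-1)^{|S|-s}\varepsilon_S$, while $\mathrm{rk}_{S^{c}\cup\{s\}}(s)=(s-\mathrm{rk}_S(s))+1$. Hence the $A$-coefficient $(-1)^{|S|}\varepsilon_S(-1)^{\mathrm{rk}_S(s)+1}$ and the $B$-coefficient $\varepsilon_{S\setminus\{s\}}(-1)^{\mathrm{rk}_{S^{c}\cup\{s\}}(s)+1}=\varepsilon_S(-1)^{|S|-\mathrm{rk}_S(s)}$ sum to $\varepsilon_S(-1)^{|S|+\mathrm{rk}_S(s)}\bigl((-1)+1\bigr)=0$, so $A=-B$ and $\partial_n x=0$. (The degenerate cases $S=\emptyset$ and $S=\{1,\dots,n\}$ are handled by the conventions $\mathcal{F}(\emptyset)=\mathcal{G}(\emptyset)=1$ and $C_0(X)\simeq\mathbb{Z}$.)

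The one genuinely delicate point is the sign bookkeeping in the last step --- tracking how $\varepsilon_S$ and the two ranks change when a single element is transferred between the blocks; I would verify conventions on the cases $n=1,2$ before trusting the general formula. Everything else (the reindexing, the Leibniz rule, the conical-system axiom, and the bijection of index sets) is formal. One could instead argue directly with the coface maps $d^{j}$ without passing to subsets, but the subset reformulation is what makes the cancellation transparent.
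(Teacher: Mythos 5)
Your proof is correct, and it supplies exactly the ``simple calculation'' that the paper omits (the paper states only that the proof of Lemma \ref{lem:fusion_product} is obvious and gives no argument): the reindexing over subsets $S\subseteq\{1,\dots,n\}$, the Leibniz rule for $\partial_n$ on the concatenation product combined with the conical-system axiom, and the pairwise cancellation under $(S,s)\mapsto(S\setminus\{s\},s)$ are the intended computation, and your sign bookkeeping ($\varepsilon_{S\setminus\{s\}}=(-1)^{|S|-s}\varepsilon_S$ and $\mathrm{rk}_{S^{c}\cup\{s\}}(s)=s-\mathrm{rk}_S(s)+1$) checks out.
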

\begin{example}
In the case $n=3$, $x\in\ker\partial_{3}$ can be written as follows:
\begin{align*}
x & =\mathcal{F}(\emptyset)\otimes\mathcal{G}(\left[1,2,3\right])\\
 & \ \ -\mathcal{F}(\left[1\right])\otimes\mathcal{G}(\left[2,3\right])+\mathcal{F}(\left[2\right])\otimes\mathcal{G}(\left[1,3\right])-\mathcal{F}(\left[3\right])\otimes\mathcal{G}(\left[1,2\right])\\
 & \ \ +\mathcal{F}(\left[1,2\right])\otimes\mathcal{G}(\left[3\right])-\mathcal{F}(\left[1,3\right])\otimes\mathcal{G}(\left[2\right])+\mathcal{F}(\left[2,3\right])\otimes\mathcal{G}(\left[1\right])\\
 & \ \ -\mathcal{F}(\left[1,2,3\right])\otimes\mathcal{G}(\emptyset).
\end{align*}
\end{example}
\begin{defn}
Let $Y$ be a poset and $\mathcal{F}$ a conical system on $P(Y)$.
For $m\geq0$ and $h\in S_{m}(A)$, we define $T_{1}(\mathcal{F},h)\in C_{m}(X)$
by
\[
T_{1}(\mathcal{F},h)=\sum_{\sigma\in\mathfrak{S}_{m}}{\rm sgn}(\sigma)\mathcal{F}(g_{\sigma,h})
\]
where $g_{\sigma,h}\in S_{m}(P(Y))$ is defined by
\[
g_{\sigma,h}(j)=\{h(\sigma(k))\mid1\leq k\leq j\}.
\]
\end{defn}
\begin{example}
Let us consider the case where $m=3$. Put $h_{i}=h(i)\in Y$ for
$i=1,2,3$. Then $T_{1}(\mathcal{F},h)\in C_{3}(X)$ is given as follows:
\begin{align*}
T_{1}(\mathcal{F},h) & =\mathcal{F}(\left[\{h_{1}\},\{h_{1},h_{2}\},\{h_{1},h_{2},h_{3}\}\right])+\mathcal{F}(\left[\{h_{2}\},\{h_{2},h_{3}\},\{h_{1},h_{2},h_{3}\}\right])\\
 & \ \ \ +\mathcal{F}(\left[\{h_{3}\},\{h_{1},h_{3}\},\{h_{1},h_{2},h_{3}\}\right])-\mathcal{F}(\left[\{h_{1}\},\{h_{1},h_{3}\},\{h_{1},h_{2},h_{3}\}\right])\\
 & \ \ \ -\mathcal{F}(\left[\{h_{2}\},\{h_{1},h_{2}\},\{h_{1},h_{2},h_{3}\}\right])-\mathcal{F}(\left[\{h_{3}\},\{h_{2},h_{3}\},\{h_{1},h_{2},h_{3}\}\right]).
\end{align*}

\end{example}
The boundary of $T_{1}(\mathcal{F},h)$ is given in the following
lemma. 
\begin{lem}
Let $Y$ be a poset and \textup{$\mathcal{F}$ a conical system on
$P(Y)$. For $m\geq0$ and $h\in S_{m}(Y)$, we have
\[
\partial_{m}T_{1}(\mathcal{F},h)=\sum_{j=1}^{m}(-1)^{j+1}T_{1}(\mathcal{F},h\circ d^{j}).
\]
}\end{lem}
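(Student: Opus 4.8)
The plan is to unfold the definition of $T_1(\mathcal{F},h)$ as an alternating sum over the symmetric group, apply the boundary operator $\partial_m$, and then reorganize the resulting double sum using the defining properties of a conical system together with the combinatorial decomposition $\mathfrak{S}_m=\{u(m',i,\sigma)\}$ recorded just before Section \ref{sub:Definition-of-fan}. The key point is that $\partial_m \mathcal{F}(g_{\sigma,h})$ is, by condition (2) of a conical system, an alternating sum of the terms $\mathcal{F}(g_{\sigma,h}\circ d^j)$, and each such term is a strictly increasing chain in $P(Y)$ obtained from the full flag $g_{\sigma,h}$ by deleting one step.

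First I would write
\[
\partial_m T_1(\mathcal{F},h)=\sum_{\sigma\in\mathfrak{S}_m}\mathrm{sgn}(\sigma)\sum_{j=1}^{m}(-1)^{j+1}\mathcal{F}(g_{\sigma,h}\circ d^j).
\]
The chain $g_{\sigma,h}$ is the flag $\{h(\sigma(1))\}\subset\{h(\sigma(1)),h(\sigma(2))\}\subset\cdots$, so deleting its $j$-th entry for $1\le j\le m-1$ gives a flag that ``skips'' one index, while deleting the top entry ($j=m$) gives a full flag on the $(m-1)$-element set $\{h(\sigma(1)),\dots,h(\sigma(m-1))\}$. The strategy is to match up the $j<m$ contributions so they telescope/cancel in pairs, leaving only the $j=m$ terms, which should reassemble into $\sum_{j'=1}^m (-1)^{j'+1} T_1(\mathcal{F},h\circ d^{j'})$ after reindexing.

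Concretely, I would fix the last-deleted-position viewpoint: for $j=m$, the surviving term is $\mathrm{sgn}(\sigma)\,(-1)^{m+1}\mathcal{F}(\text{flag of }h(\sigma(1)),\dots,h(\sigma(m-1)))$, and grouping the $\sigma$'s by the value $i=\sigma(m)$ and the induced permutation of the remaining $m-1$ indices — exactly the parametrization $\sigma=u(m,i,\sigma')$ with the sign rule $\mathrm{sgn}(u(m,i,\sigma'))=(-1)^{m+i}\mathrm{sgn}(\sigma')$ — turns this contribution into $\sum_{i=1}^m (-1)^{i+1}T_1(\mathcal{F},h\circ d^i)$, since dropping the element $h(i)$ from $h$ is precisely $h\circ d^i$. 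For the terms with $j<m$, I would show they cancel: deleting the $j$-th node of the flag $g_{\sigma,h}$ produces the same chain whether we came from $\sigma$ or from $\sigma\cdot(j\,\,j{+}1)$ (the two orderings that agree except for swapping which of $h(\sigma(j)),h(\sigma(j+1))$ is inserted first), and these two permutations have opposite sign, so their contributions with the common factor $(-1)^{j+1}$ kill each other.

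**The main obstacle** I anticipate is bookkeeping the transposition-pairing argument for the $j<m$ terms cleanly: one must check that $g_{\sigma,h}\circ d^j = g_{\sigma',h}\circ d^j$ where $\sigma'=\sigma\cdot(j\,\,j{+}1)$ and that no term is its own partner (which is fine here since adjacent transpositions are fixed-point-free on the relevant data), and simultaneously verify that the $j=m$ repackaging via $u(m,i,\sigma')$ reproduces $T_1(\mathcal{F},h\circ d^i)$ with the correct global sign. Both are finite, mechanical verifications, but getting the signs to line up — particularly reconciling the $(-1)^{j+1}$ from $\partial_m$ with the $(-1)^{m+i}$ from the $u$-parametrization — is where care is needed. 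Once the $j<m$ cancellation and the $j=m$ identification are in place, the lemma follows immediately. Note that conditions (1) and (3) of a conical system play only an auxiliary role (ensuring the chains and degrees match up), so the whole argument rests on condition (2) plus the combinatorics of $\mathfrak{S}_m$.
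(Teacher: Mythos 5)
Your proposal is correct and follows essentially the same route as the paper's proof: expand $\partial_m T_1(\mathcal{F},h)$ via condition (2) of a conical system, observe that the terms with $1\le j\le m-1$ cancel, and repackage the $j=m$ terms using the parametrization $\sigma=u(m,i,\sigma')$ together with the identity $g_{u(m,i,\sigma'),h}\circ d^{m}=g_{\sigma',h\circ d^{i}}$ and the sign rule $\mathrm{sgn}(u(m,i,\sigma'))=(-1)^{m+i}\mathrm{sgn}(\sigma')$. The only difference is that you spell out the adjacent-transposition pairing behind the cancellation $\sum_{\sigma}\mathrm{sgn}(\sigma)\mathcal{F}(g_{\sigma,h}\circ d^{j})=0$ for $j<m$, which the paper simply asserts.
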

\begin{proof}
By the definition of $T_{1}(\mathcal{F},h)$, we have
\begin{align*}
\partial_{m}T_{1}(\mathcal{F},h) & =\partial_{m}\sum_{\sigma\in\mathfrak{S}_{m}}\mathrm{sgn}(\sigma)\mathcal{F}(g_{\sigma,h})\\
 & =\sum_{\sigma\in\mathfrak{S}_{m}}\sum_{j=1}^{m}(-1)^{j+1}\mathrm{sgn}(\sigma)\mathcal{F}(g_{\sigma,h}\circ d^{j}).
\end{align*}
Since $\sum_{\sigma\in\mathfrak{S}_{m}}\mathrm{sgn}(\sigma)\mathcal{F}(g_{\sigma,h}\circ d^{j})=0$
for all $1\leq j\leq m-1$, we have
\begin{align*}
\sum_{\sigma\in\mathfrak{S}_{m}}\sum_{j=1}^{m}(-1)^{j+1}\mathrm{sgn}(\sigma)\mathcal{F}(g_{\sigma,h}\circ d^{j}) & =\sum_{\sigma\in\mathfrak{S}_{m}}\mathrm{sgn}(\sigma)(-1)^{m+1}\mathcal{F}(g_{\sigma,h}\circ d^{m}).\\
 & =\sum_{i=1}^{n}\sum_{\sigma\in\mathfrak{S}_{m-1}}\mathrm{sgn}(\sigma)(-1)^{i+1}\mathcal{F}(g_{u(m,i,\sigma),h}\circ d^{m}).
\end{align*}
We have 
\[
g_{u(m,i,\sigma),h}\circ d^{m}=g_{\sigma,h\circ d^{i}}
\]
 since 
\begin{align*}
(g_{u(m,i,\sigma),h}\circ d^{m})(j) & =\{h(d^{i}(\sigma(k)))\mid1\leq k\leq j\}\\
 & =g_{\sigma,h\circ d_{i}}(j)
\end{align*}
for all $1\leq j\leq m-1$. Thus we have
\begin{align*}
\sum_{\sigma\in\mathfrak{S}_{m-1}}\mathrm{sgn}(\sigma)\mathcal{F}(g_{u(m,i,\sigma),h}\circ d^{m}) & =\sum_{\sigma\in\mathfrak{S}_{m-1}}\mathrm{sgn}(\sigma)\mathcal{F}(g_{\sigma,h\circ d^{i}})\\
 & =T_{1}(\mathcal{F},h\circ d^{i}).
\end{align*}
It follows that 
\[
\partial_{m}T_{1}(\mathcal{F},h)=\sum_{j=1}^{m}(-1)^{j+1}T_{1}(\mathcal{F},h\circ d^{j})
\]
and the lemma is proved.\end{proof}
\begin{cor}
\label{cor:T1_conic_sytem}Let $Y$ be a poset and\textup{ $\mathcal{F}$
a conical system on $P(Y)$. We define $\mathcal{G}:\bigcup_{k=0}^{\infty}S_{k}(Y)\to\bigcup_{k=0}^{\infty}C_{k}(X)$
by
\[
\mathcal{G}(h)=T_{1}(\mathcal{F},h).
\]
Then $\mathcal{G}$ is a conical system on $Y$. }
\end{cor}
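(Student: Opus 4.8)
The plan is simply to check the three defining properties of a conical system for $\mathcal{G}$, using that the substantive computation has already been carried out in the preceding lemma on the boundary of $T_{1}(\mathcal{F},h)$.

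First, for property (1) I would observe that for $h\in S_{m}(Y)$ the image $\{h(1),\dots,h(m)\}$ is totally ordered in $Y$, so every set $g_{\sigma,h}(j)=\{h(\sigma(k))\mid 1\leq k\leq j\}$ is a nonempty totally ordered subset of $Y$, i.e. an element of $P(Y)$; moreover $g_{\sigma,h}(j)\subsetneq g_{\sigma,h}(j+1)$, so $g_{\sigma,h}\in S_{m}(P(Y))$. Since $\mathcal{F}$ is a conical system on $P(Y)$, property (1) for $\mathcal{F}$ gives $\mathcal{F}(g_{\sigma,h})\in C_{m}(X)$, and hence
\[
\mathcal{G}(h)=T_{1}(\mathcal{F},h)=\sum_{\sigma\in\mathfrak{S}_{m}}\mathrm{sgn}(\sigma)\,\mathcal{F}(g_{\sigma,h})\in C_{m}(X).
\]

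Second, property (2) for $\mathcal{G}$, namely $\partial_{k}\mathcal{G}(f)=\sum_{j=1}^{k}(-1)^{j+1}\mathcal{G}(f\circ d^{j})$ for $f\in S_{k}(Y)$, is exactly the statement of the preceding lemma with $\mathcal{G}(\cdot)=T_{1}(\mathcal{F},\cdot)$, so there is nothing further to prove here. Third, for property (3) I would note that $S_{0}(Y)=\{\emptyset\}$, that $\mathfrak{S}_{0}$ consists of the single empty permutation with sign $1$, and that the corresponding $g_{\sigma,h}$ is the unique element $\emptyset$ of $S_{0}(P(Y))$; hence $\mathcal{G}(\emptyset)=T_{1}(\mathcal{F},\emptyset)=\mathcal{F}(\emptyset)=1$ by property (3) for $\mathcal{F}$.

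Since the only nontrivial ingredient, the boundary formula (2), is already packaged in the preceding lemma, there is no real obstacle; the only points that require a moment's care are verifying that $g_{\sigma,h}$ genuinely lands in $S_{m}(P(Y))$ (which uses that $h$ is strictly increasing, so its image is a chain) and handling the degenerate case $m=0$ correctly.
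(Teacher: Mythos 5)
Your proposal is correct and follows exactly the route the paper intends: the corollary is stated as an immediate consequence of the preceding lemma on $\partial_{m}T_{1}(\mathcal{F},h)$, and your verification of properties (1) and (3) (that $g_{\sigma,h}$ lands in $S_{m}(P(Y))$ because the image of the strictly increasing $h$ is a chain, and the degenerate $m=0$ case giving $\mathcal{G}(\emptyset)=\mathcal{F}(\emptyset)=1$) supplies the routine details the paper leaves implicit. No gaps.
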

This conical system plays a fundamental role in Section \ref{sub:Dual-fans}
and \ref{sub:Dual-fan-of-fundamental-domain-fan}.

\subsection{Cubic fundamental domain fans}

Let $G$ be an abelian group which acts on $X$. Then $G$ acts on
$C_{n}(X)$ by $gx=\phi_{g}(x)$ for $(g,x)\in G\times C_{n}(X)$,
where $\phi_{g}:C_{n}(X)\to C_{n}(X)$ is a homomorphism defined by
\[
\phi_{g}(\Lambda(a_{1},\dots,a_{n}))=\Lambda(ga_{1},\dots,ga_{n})
\]
for $a_{1},\dots,a_{n}\in X$. We define $I_{m}(X,G)\subset C_{m}(X)$
as the $\mathbb{Z}$-submodule generated by
\[
gx-x\;\;\;(g\in G,\ x\in C_{m}(X)).
\]
Let $B(n)=\{0,1\}^{n}\subset\mathbb{Z}^{n}$. We define the partial
order on $B(n)$ by letting
\[
(b_{1},\dots,b_{n})\leq(b_{1}',\dots,b_{n}')
\]
if and only if $b_{j}\leq b_{j}'$ for all $1\leq j\leq n$. Let $e_{1},\dots,e_{n}$
be the elements of $B(n)$ defined by $e_{1}=(1,0,\dots,0),\dots,e_{n}=(0,\dots,0,1).$
Let $A_{n}$ be the set of all conical systems on $B(n)$. For $\mathcal{F}\in A_{n}$,
we define $T(\mathcal{F})\in C_{n+1}(X)$ by
\[
T(\mathcal{F})=\sum_{\sigma\in\mathfrak{S}_{n}}\mathrm{sgn}(\sigma)\mathcal{F}(f_{\sigma})
\]
where $f_{\sigma}\in S_{n+1}(B(n))$ is defined by
\[
f_{\sigma}(j)=\sum_{k=1}^{j-1}e_{\sigma(k)}
\]
for $1\leq j\leq n+1.$
\begin{example}
In the case $n=3$, $T(\mathcal{F})\in C_{4}(X)$ is given as follows:
\begin{align*}
T(\mathcal{F}) & =\mathcal{F}(\left[(0,0,0),(1,0,0),(1,1,0),(1,1,1)\right])+\mathcal{F}(\left[(0,0,0),(0,1,0),(0,1,1),(1,1,1)\right])\\
 & \ \ \ +\mathcal{F}(\left[(0,0,0),(0,0,1),(1,0,1),(1,1,1)\right])-\mathcal{F}(\left[(0,0,0),(1,0,0),(1,0,1),(1,1,1)\right])\\
 & \ \ \ -\mathcal{F}(\left[(0,0,0),(0,1,0),(1,1,0),(1,1,1)\right])-\mathcal{F}(\left[(0,0,0),(0,0,1),(0,1,1),(1,1,1)\right]).
\end{align*}

\end{example}
Let us calculate the boundary of $T(\mathcal{F})$. For $b\in\{0,1\}$
and $1\leq i\leq n$, we define $w_{b}^{i}:B(n-1)\to B(n)$ and $\alpha_{b}^{i}:A_{n}\to A_{n-1}$
by 
\[
w_{b}^{i}((b_{1},\dots,b_{n-1}))=(b_{1},\dots,b_{i},b,b_{i+1},\dots,b_{n-1})
\]
and $(\alpha_{b}^{i}\mathcal{F})(f)=\mathcal{F}(w_{b}^{i}\circ f)$.
The boundary of $T(\mathcal{F})$ is given by the following lemma.
\begin{lem}
\label{lem:boundary_op_top}For all $\mathcal{F}\in A_{n}$, we have
\[
\partial_{n+1}T(\mathcal{F})=\sum_{i=1}^{n}(-1)^{i+1}\left(T(\alpha_{1}^{i}\mathcal{F})-T(\alpha_{0}^{i}\mathcal{F})\right).
\]
\end{lem}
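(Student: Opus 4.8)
The plan is to compute $\partial_{n+1}T(\mathcal{F})$ directly from the definition $T(\mathcal{F})=\sum_{\sigma\in\mathfrak{S}_{n}}\mathrm{sgn}(\sigma)\mathcal{F}(f_\sigma)$, using property (2) of a conical system to move the boundary inside $\mathcal{F}$. Concretely, for each $\sigma$ we get $\partial_{n+1}\mathcal{F}(f_\sigma)=\sum_{j=1}^{n+1}(-1)^{j+1}\mathcal{F}(f_\sigma\circ d^j)$, so
\[
\partial_{n+1}T(\mathcal{F})=\sum_{\sigma\in\mathfrak{S}_n}\sum_{j=1}^{n+1}(-1)^{j+1}\mathrm{sgn}(\sigma)\,\mathcal{F}(f_\sigma\circ d^j).
\]
The key observation, exactly parallel to the preceding lemma on $\partial_m T_1(\mathcal{F},h)$, is that for each \emph{interior} index $j$ with $2\leq j\leq n$ the sum over $\sigma$ of the terms $\mathrm{sgn}(\sigma)\mathcal{F}(f_\sigma\circ d^j)$ vanishes. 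This is because dropping the $j$-th vertex of the chain $f_\sigma$ identifies the two permutations $\sigma$ and $\sigma\cdot(\text{transposition of }\sigma^{-1}(j-1),\sigma^{-1}(j))$ — more precisely, $f_\sigma\circ d^j$ only records the partial sums $\sum_{k=1}^{i-1}e_{\sigma(k)}$ for $i\ne j$, and swapping $\sigma(j-1)$ with $\sigma(j)$ leaves all of these unchanged while flipping $\mathrm{sgn}(\sigma)$; so the terms cancel in pairs. Hence only $j=1$ and $j=n+1$ survive:
\[
\partial_{n+1}T(\mathcal{F})=\sum_{\sigma\in\mathfrak{S}_n}\mathrm{sgn}(\sigma)\Bigl(\mathcal{F}(f_\sigma\circ d^1)+(-1)^{n+2}\mathcal{F}(f_\sigma\circ d^{n+1})\Bigr).
\]

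Next I would analyze the two surviving faces. The face $d^{n+1}$ drops the last vertex $f_\sigma(n+1)=e_1+\cdots+e_n=(1,\dots,1)$, so $f_\sigma\circ d^{n+1}$ is the chain of partial sums $\sum_{k=1}^{j-1}e_{\sigma(k)}$ for $1\le j\le n$, all of whose entries lie in the face $b_i=0$ for $i=\sigma(n)$... — the right way to say it is that all vertices of this chain have $i$-th coordinate equal to the value $b$ prescribed by whether $\sigma(n)=i$, i.e. this chain factors through $w_0^{i}$ (for the appropriate $i=\sigma(n)$) after suitably reindexing, giving rise to $\alpha_0^i\mathcal{F}$. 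Similarly the face $d^1$ drops $f_\sigma(1)=(0,\dots,0)$, and the remaining chain $\sum_{k=1}^{j-1}e_{\sigma(k)}$ for $2\le j\le n+1$ always has $i$-th coordinate $1$ where $i=\sigma(1)$, so it factors through $w_1^i$, giving $\alpha_1^i\mathcal{F}$. Writing $\sigma=u(n,i,\tau)$ for the $d^{n+1}$ sum and $\sigma=u(1,i,\tau)$ for the $d^1$ sum (using the decomposition $\mathfrak{S}_n=\{u(m,i,\tau)\}$ recalled in the excerpt, with $\mathrm{sgn}(u(m,i,\tau))=(-1)^{m+i}\mathrm{sgn}(\tau)$) converts each inner sum over $\tau\in\mathfrak{S}_{n-1}$ into $\pm T(\alpha_b^i\mathcal{F})$. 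Bookkeeping the signs — the $(-1)^{n+2}$ from $j=n+1$, the $(-1)^{n+i}$ from $\mathrm{sgn}(u(n,i,\tau))$, the $(-1)^{1+i}$ from $\mathrm{sgn}(u(1,i,\tau))$, and the shift in indices between $B(n)$ and $B(n-1)$ — should collapse everything to
\[
\partial_{n+1}T(\mathcal{F})=\sum_{i=1}^n(-1)^{i+1}\bigl(T(\alpha_1^i\mathcal{F})-T(\alpha_0^i\mathcal{F})\bigr).
\]

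The main obstacle I anticipate is purely combinatorial: getting the sign in front of each $T(\alpha_b^i\mathcal{F})$ exactly right, since it involves composing three separate sign contributions and a coordinate reindexing, and it is easy to be off by an overall $(-1)^i$ or $(-1)^n$. The cleanest way to control this is to first establish the identity $f_{u(n,i,\tau)}\circ d^{n+1}=w_0^{i}\circ f_\tau$ (and $f_{u(1,i,\tau)}\circ d^1=w_1^{i}\circ(\text{shift})\circ f_\tau$) at the level of the combinatorial maps, exactly as the preceding lemma did with $g_{u(m,i,\sigma),h}\circ d^m=g_{\sigma,h\circ d^i}$; once that combinatorial identity is in hand, applying $\mathcal{F}$ and using $(\alpha_b^i\mathcal{F})(f)=\mathcal{F}(w_b^i\circ f)$ turns the remaining $\tau$-sum into $T(\alpha_b^i\mathcal{F})$ essentially by definition, and only the scalar sign has to be tracked. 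The vanishing of the interior faces is the same argument as in the already-proven lemma, so no new difficulty arises there.
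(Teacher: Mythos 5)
Your proposal is correct and follows essentially the same route as the paper's proof: expand $\partial_{n+1}T(\mathcal{F})$ via property (2) of a conical system, cancel the interior faces $2\leq j\leq n$ in pairs by the transposition $\sigma\mapsto\sigma\circ(j-1\ j)$, and convert the surviving $j=1$ and $j=n+1$ faces into $\sum_i(-1)^{i+1}T(\alpha_1^i\mathcal{F})$ and $-\sum_i(-1)^{i+1}T(\alpha_0^i\mathcal{F})$ via the identities $f_{u(1,i,\tau)}\circ d^1=w_1^i\circ f_\tau$ and $f_{u(n,i,\tau)}\circ d^{n+1}=w_0^i\circ f_\tau$ together with $\mathrm{sgn}(u(m,i,\tau))=(-1)^{m+i}\mathrm{sgn}(\tau)$. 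The only cosmetic difference is that no extra ``shift'' is needed in the $d^1$ identity; otherwise the sign bookkeeping you describe is exactly what the paper carries out.
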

\begin{proof}
By the definition of $T(\mathcal{F})$ and $A_{n}$, we have
\begin{align*}
\partial_{n+1}T(\mathcal{F}) & =\partial_{n+1}\sum_{\sigma\in\mathfrak{S}_{n}}\mathrm{sgn}(\sigma)\mathcal{F}(f_{\sigma})\\
 & =\sum_{\sigma\in\mathfrak{S}_{n}}\sum_{j=1}^{n+1}(-1)^{j+1}\mathrm{sgn}(\sigma)\mathcal{F}(f_{\sigma}\circ d^{j}).
\end{align*}
Since $\sum_{\sigma\in\mathfrak{S}_{n}}\mathrm{sgn}(\sigma)\mathcal{F}(f_{\sigma}\circ d^{j})=0$
for all $2\leq j\leq n$, we have
\[
\sum_{\sigma\in\mathfrak{S}_{n}}\sum_{j=1}^{n+1}(-1)^{j+1}\mathrm{sgn}(\sigma)\mathcal{F}(f_{\sigma}\circ d^{j})=\sum_{\sigma\in\mathfrak{S}_{n}}\mathrm{sgn}(\sigma)\left(\mathcal{F}(f_{\sigma}\circ d^{1})+(-1)^{n}\mathcal{F}(f_{\sigma}\circ d^{n+1})\right).
\]
We have
\[
f_{u(1,i,\sigma)}\circ d^{1}=w_{1}^{i}\circ f_{\sigma}
\]
since 
\begin{align*}
(f_{u(1,i,\sigma)}\circ d^{1})(j) & =f_{u(1,i,\sigma)}(j+1)\\
 & =\sum_{k=1}^{j}e_{u(1,i,\sigma)(k)}\\
 & =e_{i}+\sum_{k=2}^{j}e_{d^{i}(\sigma(k-1))}\\
 & =w_{1}^{i}(\sum_{k=1}^{j-1}e_{\sigma(k)})\\
 & =w_{1}^{i}(f_{\sigma}(j)).
\end{align*}
Thus we have
\begin{align*}
\sum_{\sigma\in\mathfrak{S}_{n}}\mathrm{sgn}(\sigma)\mathcal{F}(f_{\sigma}\circ d^{1}) & =\sum_{i=1}^{n}\sum_{\sigma\in\mathfrak{S}_{n-1}}\mathrm{sgn}(u(1,i,\sigma))\mathcal{F}(f_{u(1,i,\sigma)}\circ d^{1})\\
 & =\sum_{i=1}^{n}\sum_{\sigma\in\mathfrak{S}_{n-1}}(-1)^{i+1}\mathrm{sgn}(\sigma)\mathcal{F}(w_{1}^{i}\circ f_{\sigma})\\
 & =\sum_{i=1}^{n}\sum_{\sigma\in\mathfrak{S}_{n-1}}(-1)^{i+1}\mathrm{sgn}(\sigma)(\alpha_{1}^{i}\mathcal{F})(f_{\sigma})\\
 & =\sum_{i=1}^{n}(-1)^{i+1}T(\alpha_{1}^{i}\mathcal{F}).
\end{align*}
Similarly we have
\begin{align*}
\sum_{\sigma\in\mathfrak{S}_{n}}\mathrm{sgn}(\sigma)\mathcal{F}(f_{\sigma}\circ d^{n+1}) & =\sum_{i=1}^{n}\sum_{\sigma\in\mathfrak{S}_{n-1}}\mathrm{sgn}(u(n,i,\sigma))\mathcal{F}(f_{u(n,i,\sigma)}\circ d^{n+1})\\
 & =\sum_{i=1}^{n}\sum_{\sigma\in\mathfrak{S}_{n-1}}(-1)^{n+i}\mathrm{sgn}(\sigma)\mathcal{F}(w_{0}^{i}\circ f_{\sigma})\\
 & =\sum_{i=1}^{n}(-1)^{n+i}T(\alpha_{0}^{i}\mathcal{F}).
\end{align*}
Hence the lemma is proved.\end{proof}
\begin{defn}
For $g_{1},\dots,g_{n}\in G$, we define $A(g_{1},\dots,g_{n})\subset A_{n}$
to be the set of conical systems $\mathcal{F}$ on $B(n)$ which satisfy
the following condition:

Let $1\leq i\leq n$ and $f_{1},f_{2}\in S_{k}(B(n))$. If $f_{1}(j)=e_{i}+f_{2}(j)$
for all $1\leq j\leq k$ then $\mathcal{F}(f_{1})=g_{i}\mathcal{F}(f_{2})$.\end{defn}
\begin{example}
Let $x\in X$ and $g_{1},\dots,g_{n}\in G$. For $e=(b_{1},\dots b_{n})\in B(n)$,
we define $E(e)\in X$ by 
\[
E(e)=g_{1}^{b_{1}}\cdots g_{n}^{b_{n}}x.
\]
Then the standard conical system $\mathcal{S}_{E}$ attached to $E$
satisfies $\mathcal{S}_{E}\in A(g_{1},\dots,g_{n})$. We define the
\emph{cubic fundamental domain fans} $F(x;g_{1},\dots,g_{n})\in C_{n+1}(X)$
by
\begin{align*}
F(x;g_{1},\dots,g_{n}) & =T(\mathcal{S}_{E})\\
 & =\sum_{\sigma\in S_{n}}\mathrm{sgn}(\sigma)\Lambda(x,g_{\sigma(1)}x,g_{\sigma(1)}g_{\sigma(2)}x,\dots,g_{\sigma(1)}g_{\sigma(2)}\cdots g_{\sigma(n)}x).
\end{align*}

\end{example}
The cubic fundamental domain fans $F(x;g_{1},\dots,g_{n})$ play a
fundamental role in Theorem \ref{Fact:exists-epsilon}.
\begin{lem}
\label{lem:equivalence-of-T}For $g_{1},\dots,g_{n}\in G$ and $\mathcal{F},\mathcal{G}\in A(g_{1},\dots,g_{n})$,
we have
\[
T(\mathcal{F})-T(\mathcal{G})\in I_{n+1}(X,G)+\ker\partial_{n+1}.
\]
\end{lem}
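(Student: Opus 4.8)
The plan is to exhibit $T(\mathcal{F})-T(\mathcal{G})$ as a boundary plus an element of $I_{n+1}(X,G)$, by producing a ``homotopy'' conical system on the cube of one higher dimension. Identify $B(n+1)$ with $B(n)\times\{0,1\}$, viewing the last coordinate as a homotopy parameter. The first step is to construct a conical system $\mathcal{H}$ on $B(n+1)$ with the following properties: (i) $\mathcal{H}(f_{1})=g_{i}\mathcal{H}(f_{2})$ whenever $1\le i\le n$ and $f_{1}(j)=e_{i}+f_{2}(j)$ for all $j$ (that is, $\mathcal{H}$ is equivariant in the first $n$ directions, with \emph{no} condition imposed in direction $n+1$); (ii) $\alpha_{1}^{n+1}\mathcal{H}=\mathcal{F}$; (iii) $\alpha_{0}^{n+1}\mathcal{H}=\mathcal{G}$. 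Conditions (ii) and (iii) merely prescribe $\mathcal{H}$ on the chains contained in the two disjoint facets $B(n)\times\{1\}$ and $B(n)\times\{0\}$, which is compatible with (i) because $\mathcal{F},\mathcal{G}\in A(g_{1},\dots,g_{n})$ are themselves equivariant in directions $1,\dots,n$ and the two facets share no vertex. One builds $\mathcal{H}$ on the remaining chains by induction on chain length: after imposing (i), (ii), (iii), the only undetermined values are those $\mathcal{H}(f)$ for chains $f$ whose minimum is $(0,\dots,0)\in B(n+1)$ and which are not contained in $B(n)\times\{0\}$; for such an $f$ of length $k$ the value $\partial_{k}\mathcal{H}(f)=\sum_{j}(-1)^{j+1}\mathcal{H}(f\circ d^{j})$ is already forced and is a cycle by the identity $\partial^{2}=0$, hence is realizable because the complex (\ref{eq:resolution}) is exact; the consistency of the equivariant propagation uses only that $G$ is abelian.

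With $\mathcal{H}$ in hand, I would apply Lemma \ref{lem:boundary_op_top} to $\mathcal{H}\in A_{n+1}$, obtaining
\[
\partial_{n+2}T(\mathcal{H})=\sum_{i=1}^{n+1}(-1)^{i+1}\bigl(T(\alpha_{1}^{i}\mathcal{H})-T(\alpha_{0}^{i}\mathcal{H})\bigr).
\]
The $i=n+1$ summand equals $(-1)^{n}\bigl(T(\mathcal{F})-T(\mathcal{G})\bigr)$ by (ii) and (iii). For $1\le i\le n$ one has $w_{1}^{i}(u)-w_{0}^{i}(u)=e_{i}$, so property (i) gives $\alpha_{1}^{i}\mathcal{H}=g_{i}\,\alpha_{0}^{i}\mathcal{H}$; since $T$ is $\mathbb{Z}$-linear and commutes with the $G$-action, $T(\alpha_{1}^{i}\mathcal{H})-T(\alpha_{0}^{i}\mathcal{H})=(g_{i}-1)T(\alpha_{0}^{i}\mathcal{H})\in I_{n+1}(X,G)$. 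Rearranging the displayed identity,
\[
(-1)^{n}\bigl(T(\mathcal{F})-T(\mathcal{G})\bigr)=\partial_{n+2}T(\mathcal{H})-\sum_{i=1}^{n}(-1)^{i+1}(g_{i}-1)T(\alpha_{0}^{i}\mathcal{H}).
\]
Here $\partial_{n+2}T(\mathcal{H})\in\mathrm{Im}\,\partial_{n+2}=\ker\partial_{n+1}$ by exactness of (\ref{eq:resolution}), and the sum lies in $I_{n+1}(X,G)$, so the right-hand side lies in $\ker\partial_{n+1}+I_{n+1}(X,G)$. Hence $T(\mathcal{F})-T(\mathcal{G})\in I_{n+1}(X,G)+\ker\partial_{n+1}$, as desired.

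Everything past the construction of $\mathcal{H}$ is bookkeeping (checking the sign in Lemma \ref{lem:boundary_op_top} and the equivariance identity $\alpha_{1}^{i}\mathcal{H}=g_{i}\alpha_{0}^{i}\mathcal{H}$). The one step that needs genuine care is the construction of $\mathcal{H}$: one must verify that the inductive extension is well defined, which works precisely because (\ref{eq:resolution}) is exact (so the required boundary values are realizable) and because $G$ is abelian (so the equivariance in directions $1,\dots,n$ can be imposed without clashes); this I expect to be the main obstacle, though it is not deep. Note that, unlike the closely related dual-fan statements, this argument needs no induction on $n$ and no reference to the theory of the $T_{1}$-construction.
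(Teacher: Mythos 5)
Your argument is correct, and its skeleton coincides with the paper's: both proofs manufacture a conical system $\mathcal{H}$ on $B(n+1)$ interpolating between $\mathcal{F}$ and $\mathcal{G}$ on the two facets $B(n)\times\{0\}$ and $B(n)\times\{1\}$, apply Lemma \ref{lem:boundary_op_top}, observe that the terms $T(\alpha_{1}^{i}\mathcal{H})-T(\alpha_{0}^{i}\mathcal{H})$ for $1\le i\le n$ lie in $I_{n+1}(X,G)$ by equivariance while the $i=n+1$ term is $\pm\bigl(T(\mathcal{F})-T(\mathcal{G})\bigr)$, and rearrange. Where you differ is precisely at the step you flag as the main obstacle: the construction of $\mathcal{H}$. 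You build it by an inductive extension over chain length, using the exactness of the complex (\ref{eq:resolution}) to realize the forced boundary values and the commutativity of $G$ to propagate equivariantly; this works (the two facets are disjoint, every chain reduces uniquely to either a facet chain or a chain with minimum $(0,\dots,0)$, and the forced boundaries are cycles by $\partial^{2}=0$), but it costs you the bookkeeping you describe. The paper instead writes $\mathcal{H}$ down in one line: split a chain $f$ in $B(n+1)$ into its part $f_{1}$ with last coordinate $0$ and its part $f_{2}$ with last coordinate $1$, and set $\mathcal{H}(f)=\mathcal{F}(f_{1})\otimes\mathcal{G}(f_{2})$ under the identification $C_{k_{1}}(X)\otimes C_{k_{2}}(X)=C_{k_{1}+k_{2}}(X)$; the conical-system axiom is then the Leibniz rule for $\partial$ on tensor products, and the facet and equivariance identities are immediate since $\mathcal{F}(\emptyset)=\mathcal{G}(\emptyset)=1$. (The paper's orientation of the facets is opposite to yours, which only flips an irrelevant sign.) Your version shows that only the existence of such an $\mathcal{H}$ matters and makes no use of the tensor structure; the paper's explicit formula eliminates the induction entirely and is the same device used again in Lemma \ref{lem:fusion_product} and Lemma \ref{lem:dual_equation}, so it integrates better with the rest of Section \ref{sec:Fan}.
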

\begin{proof}
We define $\mathcal{H}\in A_{n+1}$ by
\[
\mathcal{H}(f)=\mathcal{F}(f_{1})\otimes\mathcal{G}(f_{2})
\]
where $f\in S_{k}(B(n+1))$, $f_{1}\in S_{k_{1}}(B(n))$, $f_{2}\in S_{k_{2}}(B(n))$,
$k=k_{1}+k_{2}$ and 
\[
f(j)=\begin{cases}
w_{0}^{n+1}(f_{1}(j)) & 1\leq j\leq k_{1}\\
w_{1}^{n+1}(f_{2}(j-k_{1})) & k_{1}+1\leq j\leq k_{1}+k_{2}.
\end{cases}
\]
It is obvious that $\mathcal{H}\in A_{n+1}$ from the definition.
By Lemma \ref{lem:boundary_op_top}, we have
\[
\partial_{n+2}T(\mathcal{H})=\sum_{i=1}^{n+1}(-1)^{i+1}\left(T(\alpha_{1}^{i}\mathcal{H})-T(\alpha_{0}^{i}\mathcal{H})\right)\in\ker\partial_{n+1}.
\]
For $1\leq i\leq n$, we have
\[
T(\alpha_{1}^{i}\mathcal{H})=g_{i}T(\alpha_{0}^{i}\mathcal{H})
\]
since 
\begin{align*}
(\alpha_{1}^{i}\mathcal{H})(f) & =\mathcal{H}(w_{1}^{i}\circ f)\\
 & =g_{i}\mathcal{H}(w_{0}^{i}\circ f)\\
 & =g_{i}(\alpha_{0}^{i}\mathcal{H})(f).
\end{align*}
Therefore 
\[
T(\alpha_{1}^{i}\mathcal{H})-T(\alpha_{0}^{i}\mathcal{H})\in I_{n+1}(X,G)
\]
 for $1\leq i\leq n$. We have $T(\alpha_{1}^{n+1}\mathcal{H})=T(\mathcal{G})$
since
\begin{align*}
(\alpha_{1}^{n+1}\mathcal{H})(f) & =\mathcal{H}(w_{1}^{n+1}\circ f)\\
 & =\mathcal{F}(\emptyset)\otimes\mathcal{G}(f)\\
 & =\mathcal{G}(f).
\end{align*}
Similarly we have $T(\alpha_{0}^{n+1}\mathcal{H})=T(\mathcal{F})$.
Hence the lemma is proved.
\end{proof}
The following proposition immediately follows from Lemma \ref{lem:equivalence-of-T}.
\begin{prop}
\label{prop:another_cubic_fandamental_domain}For any $x,y\in X$,
we have 
\[
F(y;g_{1},\dots,g_{n-1})-F(x;g_{1},\dots,g_{n-1})\in I_{n}(X,G)+\ker\partial_{n}.
\]

\end{prop}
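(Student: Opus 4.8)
The plan is to read off both $F(x;g_1,\dots,g_{n-1})$ and $F(y;g_1,\dots,g_{n-1})$ as values $T(\mathcal{F})$ of conical systems lying in the \emph{same} set $A(g_1,\dots,g_{n-1})$, and then to quote Lemma \ref{lem:equivalence-of-T} (with $n$ there replaced by $n-1$). Concretely, recall from the Example preceding the proposition that for $e=(b_1,\dots,b_{n-1})\in B(n-1)$ one sets $E_x(e)=g_1^{b_1}\cdots g_{n-1}^{b_{n-1}}x\in X$, that $F(x;g_1,\dots,g_{n-1})=T(\mathcal{S}_{E_x})$, and that $\mathcal{S}_{E_x}\in A(g_1,\dots,g_{n-1})$. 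The identical construction with $x$ replaced by $y$ gives $F(y;g_1,\dots,g_{n-1})=T(\mathcal{S}_{E_y})$ with $\mathcal{S}_{E_y}\in A(g_1,\dots,g_{n-1})$ as well.

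The one point that deserves an explicit (but entirely routine) check is the membership $\mathcal{S}_{E_x}\in A(g_1,\dots,g_{n-1})$. Unwinding the definition of $A(\cdot)$: given $1\leq i\leq n-1$ and $f_1,f_2\in S_k(B(n-1))$ with $f_1(j)=e_i+f_2(j)$ for all $1\leq j\leq k$, one must show $\mathcal{S}_{E_x}(f_1)=g_i\,\mathcal{S}_{E_x}(f_2)$. Since the standard conical system simply applies $E_x$ coordinatewise to the entries of the cone, this reduces to the identity $E_x(e_i+e)=g_i\,E_x(e)$ for every $e\in B(n-1)$ whose $i$-th coordinate is $0$, which is immediate from the definition of $E_x$ and the fact that $G$ is abelian.

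With these identifications in hand, I would apply Lemma \ref{lem:equivalence-of-T} directly to $\mathcal{F}=\mathcal{S}_{E_y}$ and $\mathcal{G}=\mathcal{S}_{E_x}$, both of which lie in $A(g_1,\dots,g_{n-1})$: the lemma yields $T(\mathcal{S}_{E_y})-T(\mathcal{S}_{E_x})\in I_n(X,G)+\ker\partial_n$, i.e. $F(y;g_1,\dots,g_{n-1})-F(x;g_1,\dots,g_{n-1})\in I_n(X,G)+\ker\partial_n$, which is the claim. There is thus no genuine obstacle here; the entire substance of the proposition is carried by Lemma \ref{lem:equivalence-of-T}, whose proof (via the auxiliary system $\mathcal{H}$ built by concatenating $\mathcal{F}$ and $\mathcal{G}$ in the extra coordinate and then applying the boundary formula of Lemma \ref{lem:boundary_op_top}) is the only nontrivial ingredient.
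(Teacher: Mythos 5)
Your proof is correct and is exactly the paper's route: the paper states that the proposition ``immediately follows from Lemma \ref{lem:equivalence-of-T},'' i.e. one applies that lemma (with $n$ shifted to $n-1$) to the two standard conical systems $\mathcal{S}_{E_x},\mathcal{S}_{E_y}\in A(g_1,\dots,g_{n-1})$, whose membership in $A(g_1,\dots,g_{n-1})$ is already recorded in the Example defining the cubic fundamental domain fans. Your explicit verification of that membership is the only added detail, and it is the right (routine) check.
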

The following lemma follows from the standard argument in homological
algebra.
\begin{lem}
\label{lem:another_basis}Let $g_{1},\dots,g_{n}\in G$ and $C=(c_{ij})_{i,j=1}^{n}\in M_{n}(\mathbb{Z})$.
Put $g_{i}'=\prod_{j=1}^{n}g_{j}^{c_{ij}}$ for all $1\leq i\leq n$.
Then
\[
F(x;g_{1}',\dots,g_{n}')-\det(C)F(x;g_{1},\dots,g_{n})\in I_{n+1}(X,G)+\ker\partial_{n+1}
\]
for all $x\in X$. \end{lem}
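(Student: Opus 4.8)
The plan is to reduce the lemma to one structural fact and then finish with the standard multilinear-algebra computation. Write the group $G$ additively (so $g_i'=\sum_j c_{ij}g_j$), fix $x\in X$, set $M:=C_{n+1}(X)/(I_{n+1}(X,G)+\ker\partial_{n+1})$, and write $\lfloor g_1,\dots,g_n\rfloor$ for the image of $F(x;g_1,\dots,g_n)$ in $M$. I claim that $(g_1,\dots,g_n)\mapsto\lfloor g_1,\dots,g_n\rfloor$ is $\mathbb{Z}$-multilinear and alternating. Granting this, expanding multilinearly gives
\[
\lfloor g_1',\dots,g_n'\rfloor=\sum_{\varphi\colon\{1,\dots,n\}\to\{1,\dots,n\}}\Bigl(\prod_{i=1}^n c_{i\varphi(i)}\Bigr)\,\lfloor g_{\varphi(1)},\dots,g_{\varphi(n)}\rfloor,
\]
and every non-injective $\varphi$ contributes $0$ (two entries coincide) while $\varphi=\pi\in\mathfrak{S}_n$ contributes $\mathrm{sgn}(\pi)\lfloor g_1,\dots,g_n\rfloor$, so the total equals $\det(C)\lfloor g_1,\dots,g_n\rfloor$ --- the case $\det(C)=0$ included. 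Since $I_{n+1}(X,G)+\ker\partial_{n+1}$ is a $\mathbb{Z}$-submodule, this is precisely the assertion of the lemma. This last step is the ``standard argument in homological algebra''.

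The alternating property holds already in $C_{n+1}(X)$, before passing to $M$. From
\[
F(x;g_1,\dots,g_n)=\sum_{\sigma\in\mathfrak{S}_n}\mathrm{sgn}(\sigma)\,\Lambda\bigl(x,\ g_{\sigma(1)}x,\ g_{\sigma(1)}g_{\sigma(2)}x,\ \dots,\ g_{\sigma(1)}\cdots g_{\sigma(n)}x\bigr),
\]
if $g_a=g_b$ with $a\neq b$ then the fixed-point-free involution $\sigma\mapsto\sigma\circ(\sigma^{-1}(a)\,\sigma^{-1}(b))$ changes $\mathrm{sgn}$ by $-1$ and --- since $G$ is abelian and $g_a=g_b$ --- leaves every partial product $g_{\sigma(1)}\cdots g_{\sigma(k)}x$ unchanged, so the paired terms cancel and $F(x;g_1,\dots,g_n)=0$. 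The same bookkeeping gives $F(x;g_{\pi(1)},\dots,g_{\pi(n)})=\mathrm{sgn}(\pi)F(x;g_1,\dots,g_n)$ for every $\pi\in\mathfrak{S}_n$. Hence it is enough to prove additivity of $\lfloor\,\cdot\,\rfloor$ in one argument, say the last; the relations $\lfloor\dots,0_G,\dots\rfloor=0$ and $\lfloor\dots,-g,\dots\rfloor=-\lfloor\dots,g,\dots\rfloor$ then follow formally, and full $\mathbb{Z}$-multilinearity with them.

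So the crux is additivity in the last slot,
\[
F(x;g_1,\dots,g_{n-1},g_nh)\equiv F(x;g_1,\dots,g_{n-1},g_n)+F(x;g_1,\dots,g_{n-1},h)\pmod{I_{n+1}(X,G)+\ker\partial_{n+1}}.
\]
By Proposition \ref{prop:another_cubic_fandamental_domain} the last term may be replaced by $F(g_nx;g_1,\dots,g_{n-1},h)$, so it is enough to show $W\in I_{n+1}(X,G)+\ker\partial_{n+1}$, where
\[
W:=F(x;g_1,\dots,g_{n-1},g_nh)-F(x;g_1,\dots,g_{n-1},g_n)-F(g_nx;g_1,\dots,g_{n-1},h)
\]
is the ``edge-subdivision'' fan. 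I would establish this by computing $\partial_{n+1}W$ directly: for $n=1$ one gets $W=-\partial_2\Lambda(x,g_1x,g_1hx)\in\ker\partial_2$ on the nose, and in general, after cancellation, $\partial_{n+1}W$ is a $\mathbb{Z}$-combination of terms $(g_k-1)z_k$ with $1\le k\le n-1$, the $z_k$ being built from the $(n-1)$-dimensional edge-subdivision fans obtained by deleting $g_k$. Arranging --- by a suitable strengthening of the inductive hypothesis --- that each $z_k$ is a cycle, exactness of the complex $C_\bullet(X)$ lets one write $z_k=\partial_{n+1}u_k$, so that $\partial_{n+1}W=\partial_{n+1}\bigl(\sum_k(g_k-1)u_k\bigr)$ with $\sum_k(g_k-1)u_k\in I_{n+1}(X,G)$, whence $W-\sum_k(g_k-1)u_k\in\ker\partial_{n+1}$.

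I expect the main obstacle to be exactly this boundary computation: one must identify the correct inductive statement --- carrying along not just ``$W\in I+\ker\partial$'' but the sharper form of $\partial W$ as a sum of $(g_k-1)\cdot(\text{cycle})$ --- and keep the signs straight as the boundary peels off the two faces $\{b_k=0\}$ and $\{b_k=1\}=e_k+\{b_k=0\}$ of the cube $B(n-1)$, in the spirit of Lemma \ref{lem:boundary_op_top} and the proof of Lemma \ref{lem:equivalence-of-T}, but bulkier. Everything downstream of it --- the multilinear expansion and the appeal to exactness of $C_\bullet(X)$ --- is routine.
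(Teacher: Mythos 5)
Your reduction is structurally sound as far as it goes: the vanishing of $F(x;g_{1},\dots,g_{n})$ when two arguments coincide and the identity $F(x;g_{\pi(1)},\dots,g_{\pi(n)})=\mathrm{sgn}(\pi)F(x;g_{1},\dots,g_{n})$ do hold on the nose in $C_{n+1}(X)$ by the involution and reindexing arguments you give, and once $\mathbb{Z}$-multilinearity of the class of $F(x;\cdot)$ in $C_{n+1}(X)/(I_{n+1}(X,G)+\ker\partial_{n+1})$ is available, the determinant expansion (including the case $\det(C)=0$) is formal. But there is a genuine gap at exactly the point you flag yourself: the additivity
\[
F(x;g_{1},\dots,g_{n-1},g_{n}h)\equiv F(x;g_{1},\dots,g_{n-1},g_{n})+F(x;g_{1},\dots,g_{n-1},h)
\]
modulo $I_{n+1}(X,G)+\ker\partial_{n+1}$ is never proved. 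You verify it only for $n=1$ (where the subdivision fan is literally a boundary) and then describe a hoped-for induction whose inductive statement you admit you have not identified; the claim that $\partial_{n+1}W$ organizes itself into terms $(g_{k}-1)z_{k}$ with each $z_{k}$ a cycle is precisely the nontrivial content and is asserted, not established. Since the alternating and permutation identities alone yield the lemma only for permutation matrices, this missing step is the whole theorem, and the proposal as written does not prove it.

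It is worth comparing with how the paper closes this hole. The paper transports everything to the free abelian group $E$ on $e_{1},\dots,e_{n}$ via the homomorphism $t:C_{m}(E)\to C_{m}(X)$, $\Lambda(u_{1},\dots,u_{m})\mapsto\Lambda(s(u_{1})x,\dots,s(u_{m})x)$, observes that $C_{\bullet}(E)$ is a free resolution of $\mathbb{Z}$ over $\mathbb{Z}[E]$, so that
\[
\partial_{n+1}^{-1}(I_{n}(E,E))/(I_{n+1}(E,E)+\ker\partial_{n+1})\cong H_{n}(E,\mathbb{Z})\cong\wedge^{n}E,
\]
with $F(1;u_{1},\dots,u_{n})\mapsto u_{1}\wedge\cdots\wedge u_{n}$; the lemma is then the identity $e_{1}'\wedge\cdots\wedge e_{n}'=\det(C)\,e_{1}\wedge\cdots\wedge e_{n}$, and the result is pushed back to $X$ by $t$. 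Your multilinearity claim is exactly the multilinearity of the wedge product under this isomorphism, so completing your plan amounts to re-deriving the standard computation of $H_{n}(\mathbb{Z}^{n},\mathbb{Z})$ with the cubical fundamental cycle by hand. If you want an argument internal to the paper's combinatorial machinery, a more promising route to the additivity is to exhibit the subdivided prism as a conical system in $A(g_{1},\dots,g_{n-1},g_{n}h)$ and appeal to Lemma \ref{lem:equivalence-of-T}, as in the proof of Proposition \ref{prop:another_cubic_fandamental_domain}; but that construction, too, has to be written out and checked, and until one of these is done the proof is incomplete.
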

\begin{proof}
Let $E$ be a free abelian group generated by $e_{1},\dots,e_{n}\in E$.
We define a group homomorphism $s:E\to G$ by $s(e_{1})=g_{1},\dots,s(e_{n})=g_{n}$.
We set $e_{i}'=\prod_{j=1}^{n}e_{j}^{c_{ij}}$ for all $1\leq i\leq n$.
Note that $s(e_{i}')=g_{i}'$ for all $1\leq i\leq n$. We define
a homomorphism $t:C_{m}(E)\to C_{m}(X)$ by 
\[
t(\Lambda(u_{1},\dots,u_{n}))=\Lambda(s(u_{1})x,\dots,s(u_{n})x)
\]
for $u_{1},\dots,u_{n}\in E$. From the definition of group homology,
we have
\[
\partial_{n+1}^{-1}(I_{n}(E,E))/(I_{n+1}(E,E)+\ker\partial_{n+1})=H_{n}(E,\mathbb{Z}).
\]
Therefore there exists a natural isomorphism 
\[
\eta:\partial_{n+1}^{-1}(I_{n}(E,E))/(I_{n+1}(E,E)+\ker\partial_{n+1})\to\wedge^{n}E.
\]
We have
\[
\eta(F(1;u_{1},\dots,u_{n}))=u_{1}\wedge\cdots\wedge u_{n}
\]
for all $u_{1},\dots,u_{n}\in E$. Thus we have
\[
\eta(F(1;e_{1}',\dots,e_{n}'))=\det(C)\eta(F(1;e_{1},\dots,e_{n})).
\]
Therefore we have
\begin{equation}
F(1;e_{1}',\dots,e_{n}')-\det(C)F(1;e_{1},\dots,e_{n})\in I_{n+1}(E,E)+\ker\partial_{n+1}.\label{eq:00}
\end{equation}
By applying $t$ to (\ref{eq:00}), we have
\[
F(x;g_{1}',\dots,g_{n}')-\det(C)F(x;g_{1},\dots,g_{n})\in I_{n+1}(X,G)+\ker\partial_{n+1}.
\]

\end{proof}

\subsection{Dual fans\label{sub:Dual-fans}}

Let $V$ be an $n$-dimensional vector space over a field $L$. We
consider $C_{n}(V\setminus\{0\})$. For simplicity, we use the notation
$C_{n}(V)$ instead of $C_{n}(V\setminus\{0\})$. We define $W(V)\subset C_{n}(V)$
as the submodule generated by
\[
\{\Lambda(a_{1},\dots,a_{n})\mid a_{1},\dots,a_{n}\in V\setminus\{0\},\ a_{1},\dots,a_{n}\mbox{ are linearly dependent}\}.
\]
We define the duality map.
\begin{defn}
The duality map $\varphi:C_{n}(V)\rightarrow C_{n}(V^{*})$ is a homomorphism
defined by
\[
\varphi(\Lambda(a_{1},\cdots,a_{n}))=\begin{cases}
\Lambda(b_{1},\dots,b_{n}) & a_{1},\dots,a_{n}:\mbox{linearly independent}\\
0 & a_{1},\dots,a_{n}:\mbox{linearly dependent}
\end{cases}
\]
for $a_{1},\dots,a_{n}\in V\setminus\{0\}$, where $b_{1},\dots,b_{n}$
is the dual basis of $a_{1},\dots,a_{n}$.
\end{defn}
We call $\varphi(\mathbb{B})$ the \emph{dual fan} of $\mathbb{B}$.
Let $Y$ be a poset and $c:Y\to V\setminus\{0\}$ a map. We define
$D(c)$ to be the set consisting of all maps 
\[
\mathcal{D}:P(Y)\to V^{*}\setminus\{0\}
\]
 such that $\left\langle c(a),\mathcal{D}(U)\right\rangle =0$ for
all $U\in P(Y)$ and $a\in U$ such that $\#U<n$. There is no restriction
on the choice of $\mathcal{D}(U)$ for $U\subset A$ such that $\#U\geq n$.
From the definition, it is obvious that $D(c)\neq\emptyset$ for all
$c$. 
\begin{lem}
\label{lem:constant-multiply}For $t_{1},\dots,t_{n}\in L^{\times}$
and $a_{1},\dots,a_{n}\in V\setminus\{0\}$, we have
\[
\Lambda(a_{1},\dots,a_{n})-\Lambda(t_{1}a_{1},\dots,t_{n}a_{n})\in W(V)+\ker\partial_{n}
\]
\end{lem}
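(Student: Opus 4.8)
The plan is to reduce the statement to the cubic fundamental domain fan machinery developed in Section \ref{sub:Definition-of-fan}, using the multiplicative group generated by the scalars $t_\mu$. Concretely, let $G = L^{\times}$ acting on $X = V\setminus\{0\}$ by scalar multiplication, so that $\Lambda(t_1 a_1,\dots,t_n a_n)$ is a "twist" of $\Lambda(a_1,\dots,a_n)$. First I would set $x = a_1$ (or any fixed nonzero vector) and try to realize both $\Lambda(a_1,\dots,a_n)$ and $\Lambda(t_1 a_1,\dots,t_n a_n)$ as the "last" or "first" term of $T(\mathcal F)$ for suitable conical systems $\mathcal F \in A(g_1,\dots,g_n)$ with the $g_i$ chosen among the $t_\mu$. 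However, the cubic construction intertwines the vectors in a fixed pattern $x, g_{\sigma(1)}x, \dots$, so it does not directly produce an arbitrary simple cone $\Lambda(a_1,\dots,a_n)$. So the cleaner route is the following telescoping argument.

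The key idea is to change the scalars one coordinate at a time. For $0 \le k \le n$ set
\[
b^{(k)}_\mu = \begin{cases} t_\mu a_\mu & \mu \le k \\ a_\mu & \mu > k, \end{cases}
\]
so that $\Lambda(b^{(0)}) = \Lambda(a_1,\dots,a_n)$ and $\Lambda(b^{(n)}) = \Lambda(t_1 a_1,\dots,t_n a_n)$. It suffices to show each difference $\Lambda(b^{(k)}) - \Lambda(b^{(k-1)})$ lies in $W(V) + \ker\partial_n$, since the submodule is closed under sums. So the real content is the one-coordinate case: for a simple cone $\Lambda(a_1,\dots,a_n)$ and $t \in L^\times$, show $\Lambda(a_1,\dots,a_{k-1}, t a_k, a_{k+1},\dots,a_n) - \Lambda(a_1,\dots,a_n) \in W(V)+\ker\partial_n$. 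By relabelling we may take $k = 1$. Here I would write down an explicit $(n+1)$-dimensional bounding fan. Consider
\[
\mathbb{B} = \sum_{j} (-1)^{j} \Lambda(\text{faces obtained by deleting one entry from } (a_1, t a_1, a_2, \dots, a_n)),
\]
i.e. $\mathbb{B} = \partial_{n+1}\bigl(\Lambda(a_1, t a_1, a_2,\dots,a_n)\bigr)$. Since $\partial_n \partial_{n+1} = 0$, automatically $\mathbb{B} \in \ker\partial_n$. Expanding, $\partial_{n+1}\Lambda(a_1, t a_1, a_2,\dots,a_n)$ equals $\Lambda(ta_1, a_2,\dots,a_n) - \Lambda(a_1,a_2,\dots,a_n)$ plus terms in which both $a_1$ and $ta_1$ survive (deleting one of $a_2,\dots,a_n$); each such term has the linearly dependent pair $a_1, ta_1$ among its generators, hence lies in $W(V)$. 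Therefore $\Lambda(ta_1,a_2,\dots,a_n) - \Lambda(a_1,\dots,a_n) = \mathbb{B} - (\text{element of } W(V)) \in \ker\partial_n + W(V)$, as desired.

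The main obstacle, and the only thing requiring care, is bookkeeping the signs and confirming that every face of $\Lambda(a_1, ta_1, a_2, \dots, a_n)$ other than the two in which exactly one of $a_1, ta_1$ is deleted indeed retains \emph{both} $a_1$ and $ta_1$, so it is a linearly dependent cone and belongs to $W(V)$ — this is immediate, since deleting one entry from an $(n{+}1)$-tuple that already has a repeated-direction pair leaves that pair intact unless one of the pair is the deleted entry. The telescoping step then just uses that $W(V)+\ker\partial_n$ is a subgroup. I would also remark that one can absorb the constant $t$ into the proof uniformly whether $t=1$ (trivial) or $t\neq 1$, and that the same argument handles all coordinates simultaneously by applying it $n$ times, giving the stated lemma.
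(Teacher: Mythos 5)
Your proof is correct and is essentially the paper's own argument: telescope to the case of rescaling a single coordinate, then observe that $\partial_{n+1}$ of the $(n+1)$-cone obtained by inserting $ta_k$ next to $a_k$ yields the desired difference (up to sign) plus faces that still contain the linearly dependent pair $\{a_k, ta_k\}$ and hence lie in $W(V)$. The only cosmetic quibble is the phrase ``by relabelling we may take $k=1$'' --- permuting entries of an ordered cone symbol is not a priori harmless, but this is moot since your insertion argument works verbatim at any position $k$, producing only an irrelevant sign $(-1)^k$.
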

\begin{proof}
It is enough to prove that

\[
x=\Lambda(a_{1},\dots a_{k-1},ta_{k},a_{k+1},\dots,a_{n})-\Lambda(a_{1},\dots,a_{k-1},a_{k},a_{k+1},\dots,a_{n})\in W(V)+\ker\partial_{n}
\]
which follows from 
\[
\partial_{n+1}\Lambda(a_{1},\dots a_{k-1},ta_{k},a_{k},\dots,a_{n})-(-1)^{k}x\in W(V).
\]

\end{proof}
The next lemma immediately follows from Lemma \ref{lem:constant-multiply}.
\begin{lem}
Let $Y$ be a set, $c:Y\to V\setminus\{0\}$ a map, and $\mathcal{D}\in D(c)$
a map. Then we have
\[
\varphi\Lambda(c(a_{1}),\dots,c(a_{n}))-\Lambda(\mathcal{D}(\{a_{2},\dots,a_{n}\}),\dots,\mathcal{D}(\{a_{1},\dots,a_{n-1}\}))\in W(V^{*})+\ker\partial_{n}
\]
for all $a_{1}<\cdots<a_{n}\in Y$.
\end{lem}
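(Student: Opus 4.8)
The plan is to unwind the defining relations of $D(c)$ so that the second cone becomes a coordinatewise scalar rescaling of the dual cone $\varphi\Lambda(c(a_1),\dots,c(a_n))$, and then to invoke Lemma \ref{lem:constant-multiply}. Write $U_j=\{a_1,\dots,a_n\}\setminus\{a_j\}$, so that the statement concerns the element $\varphi\Lambda(c(a_1),\dots,c(a_n))-\Lambda(\mathcal{D}(U_1),\dots,\mathcal{D}(U_n))$. Since $\#U_j=n-1<n$, the definition of $\mathcal{D}\in D(c)$ gives $\left\langle c(a_i),\mathcal{D}(U_j)\right\rangle=0$ for all $i\neq j$, together with $\mathcal{D}(U_j)\neq 0$. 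I would then split into two cases according to whether $c(a_1),\dots,c(a_n)$ are linearly independent.

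First, in the linearly independent case, let $b_1,\dots,b_n$ be the dual basis, so that $\varphi\Lambda(c(a_1),\dots,c(a_n))=\Lambda(b_1,\dots,b_n)$ by definition of $\varphi$. For each fixed $j$ the $n-1$ vectors $\{c(a_i):i\neq j\}$ are linearly independent, so their common annihilator in $V^*$ is exactly the line $L\cdot b_j$; since $\mathcal{D}(U_j)$ is a nonzero vector in that annihilator, $\mathcal{D}(U_j)=t_j b_j$ for some $t_j\in L^{\times}$. Lemma \ref{lem:constant-multiply}, applied in $V^*$, then yields
\[
\Lambda(b_1,\dots,b_n)-\Lambda(t_1b_1,\dots,t_nb_n)\in W(V^*)+\ker\partial_n,
\]
which is the desired conclusion.

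It remains to treat the linearly dependent case, where $\varphi\Lambda(c(a_1),\dots,c(a_n))=0$, so that it suffices to show $\Lambda(\mathcal{D}(U_1),\dots,\mathcal{D}(U_n))\in W(V^*)$, i.e. that $\mathcal{D}(U_1),\dots,\mathcal{D}(U_n)$ are themselves linearly dependent. If on the contrary they formed a basis of $V^*$, I would take the dual basis $e_1^*,\dots,e_n^*\in V$; then for each $i$ both $c(a_i)$ and $e_i^*$ annihilate the hyperplane spanned by $\{\mathcal{D}(U_j):j\neq i\}$ (the former by the relations above since $a_i\in U_j$ for $i\neq j$, the latter by the dual-basis property), hence $c(a_i)$ is a nonzero scalar multiple of $e_i^*$, making $c(a_1),\dots,c(a_n)$ a basis and contradicting linear dependence. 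I do not expect a genuine obstacle: the argument is routine bookkeeping with the annihilator conditions, and the only mildly subtle point—which the ``immediately follows'' remark glosses over—is this linearly dependent case, resolved by the short linear-algebra observation just described.
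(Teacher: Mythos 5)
Your proof is correct and follows the route the paper intends: in the nondegenerate case each $\mathcal{D}(U_j)$ spans the annihilator of $\{c(a_i)\mid i\neq j\}$, hence equals $t_jb_j$ for some $t_j\in L^{\times}$, and Lemma \ref{lem:constant-multiply} applied in $V^{*}$ finishes the argument. The paper dismisses the lemma as an immediate consequence of Lemma \ref{lem:constant-multiply} and says nothing about the case where $c(a_1),\dots,c(a_n)$ are linearly dependent; your short annihilator argument showing that $\mathcal{D}(U_1),\dots,\mathcal{D}(U_n)$ are then also dependent (so the cone lies in $W(V^{*})$) correctly fills that gap.
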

Let $Y$ be a poset and $c:Y\to V\setminus\{0\}$ a map. Let $\mathcal{S}_{c}$
be the standard conical system on $Y$ attached to $c$. We want to
construct a natural conical system $\mathcal{F}$ on $Y$ with values
in $\bigcup_{m}C_{m}(V^{*})$ such that 
\[
\mathcal{F}(h)+(-1)^{n}\varphi(\mathcal{S}_{c}(h))\in W(V^{*})+\ker\partial_{n}
\]
for all $h\in S_{n}(Y)$. The following lemma gives such a conical
system.
\begin{lem}
\label{lem:dual_equation}Let $\mathcal{D}\in D(c)$. Let $\mathcal{F}$
be a conical system on $Y$ defined by $\mathcal{F}(h)=T_{1}(\mathcal{S}_{\mathcal{D}},h)$
where $\mathcal{S}_{\mathcal{D}}$ is the standard conical system
attached to $\mathcal{D}$. Then we have 
\[
\mathcal{F}(h)+(-1)^{n}\varphi(\mathcal{S}_{c}(h))\in W(V^{*})+\ker\partial_{n}
\]
for all $h\in S_{n}(Y)$.\end{lem}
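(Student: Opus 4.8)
The plan is to realize the expression in the statement, modulo $W(V^{*})+\ker\partial_n$, as a fusion product of two explicit conical systems whose non-leading summands are degenerate cones. Write $h=[a_1,\dots,a_n]$ and identify $\mathrm{Im}(h)=\{a_1<\dots<a_n\}$ with $\{1,\dots,n\}$ via the order isomorphism. By Corollary~\ref{cor:T1_conic_sytem}, $\mathcal{F}:=T_1(\mathcal{S}_{\mathcal{D}},\cdot)$ is a conical system on $Y$, and precomposition with $h$ gives a conical system $\mathcal{F}_h$ on $\{1,\dots,n\}$ with $\mathcal{F}_h(\phi)=\mathcal{F}(h\circ\phi)$, so in particular $\mathcal{F}_h([1,\dots,n])=\mathcal{F}(h)$. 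Put $b'_m:=\mathcal{D}(\mathrm{Im}(h)\setminus\{a_m\})\in V^{*}\setminus\{0\}$ and let $\mathcal{G}$ be the standard conical system on $\{1,\dots,n\}$ attached to $m\mapsto b'_m$. Since $\mathcal{S}_c(h)=\Lambda(c(a_1),\dots,c(a_n))$, the lemma immediately preceding this one gives $\varphi(\mathcal{S}_c(h))\equiv\Lambda(b'_1,\dots,b'_n)\pmod{W(V^{*})+\ker\partial_n}$; multiplying through by $(-1)^n$, it therefore suffices to show $(-1)^n\mathcal{F}(h)+\Lambda(b'_1,\dots,b'_n)\in W(V^{*})+\ker\partial_n$. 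The case $n=1$ is immediate ($W(V^{*})=0$ and the difference of two one-generator cones is a cycle), so assume $n\ge2$.

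I would then form the fusion product $x\in C_n(V^{*})$ of $\mathcal{F}_h$ and $\mathcal{G}$ as in Lemma~\ref{lem:fusion_product}, so $\partial_n x=0$. In its defining sum the $k=n$ summand is $(-1)^n\mathcal{F}_h([1,\dots,n])\otimes\mathcal{G}(\emptyset)=(-1)^n\mathcal{F}(h)$ and the $k=0$ summand is $\mathcal{F}_h(\emptyset)\otimes\mathcal{G}([1,\dots,n])=\Lambda(b'_1,\dots,b'_n)$; hence $(-1)^n\mathcal{F}(h)+\Lambda(b'_1,\dots,b'_n)$ is $x$ minus the remaining summands, and it remains to see that every remaining summand lies in $W(V^{*})$.

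A remaining summand is a scalar multiple of $\mathcal{F}_h(f)\otimes\mathcal{G}(g)$ with $f\in S_k(\{1,\dots,n\})$, $g\in S_{n-k}(\{1,\dots,n\})$, $1\le k\le n-1$, and $\mathrm{Im}(f)\sqcup\mathrm{Im}(g)=\{1,\dots,n\}$. By definition of $T_1$, $\mathcal{F}_h(f)=T_1(\mathcal{S}_{\mathcal{D}},[a_l:l\in\mathrm{Im}(f)])$ is an alternating sum of cones each of whose generator list ends with $\mathcal{D}(\{a_l:l\in\mathrm{Im}(f)\})$, while $\mathcal{G}(g)=\Lambda(b'_{g(1)},\dots,b'_{g(n-k)})$. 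Thus each cone occurring in $\mathcal{F}_h(f)\otimes\mathcal{G}(g)$ has among its $n$ generators the $n-k+1$ vectors $\mathcal{D}(\{a_l:l\in\mathrm{Im}(f)\})$ and $b'_{g(1)},\dots,b'_{g(n-k)}$, and all of these are orthogonal to $c(a_l)$ for every $l\in\mathrm{Im}(f)$: for the first vector this is the defining property of $\mathcal{D}\in D(c)$ (the set has $k<n$ elements), and for $b'_{g(i)}=\mathcal{D}(\mathrm{Im}(h)\setminus\{a_{g(i)}\})$ it holds because $g(i)\notin\mathrm{Im}(f)$. When $c(a_1),\dots,c(a_n)$ are linearly independent, $\{c(a_l):l\in\mathrm{Im}(f)\}^{\perp}$ has dimension exactly $n-k$, so these $n-k+1$ vectors are linearly dependent, the cone is degenerate, and it lies in $W(V^{*})$. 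This proves $(-1)^n\mathcal{F}(h)+\Lambda(b'_1,\dots,b'_n)\in W(V^{*})+\ker\partial_n$, and with the reduction above the lemma follows whenever $c(a_1),\dots,c(a_n)$ are linearly independent.

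The hard part will be the linearly dependent case. Then $\varphi(\mathcal{S}_c(h))=0$ and the preceding lemma already gives $\Lambda(b'_1,\dots,b'_n)\in W(V^{*})+\ker\partial_n$, so again only the cross summands need treatment; but the dimension count above breaks once $\{c(a_l):l\in\mathrm{Im}(f)\}$ is itself linearly dependent. This cannot occur for $k=n-1$ (there $\mathcal{D}(\{a_l:l\in\mathrm{Im}(f)\})$ coincides with one of the $b'_{g(i)}$, forcing a repeated generator), but it can for $k\le n-2$. For those terms I would normalize the values of $\mathcal{D}$ up to scalars by Lemma~\ref{lem:constant-multiply} and induct on $n$, replacing each degenerate factor $\mathcal{F}_h(f)=T_1(\mathcal{S}_{\mathcal{D}},[a_l:l\in\mathrm{Im}(f)])$ modulo $W+\ker\partial$ by the corresponding vanishing dual fan via the lower-dimensional case of the lemma; making this reduction rigorous, in particular passing cleanly between $V^{*}$ and the relevant smaller spaces, is the delicate point I expect to occupy most of the work.
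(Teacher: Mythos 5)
Your setup is exactly the paper's: reduce to the fusion product (Lemma \ref{lem:fusion_product}) of $\mathcal{F}=T_{1}(\mathcal{S}_{\mathcal{D}},\cdot)$ with the standard conical system attached to $j\mapsto b'_{j}=\mathcal{D}(\mathrm{Im}(h)\setminus\{a_{j}\})$, identify the $k=0$ and $k=n$ summands with $\Lambda(b'_{1},\dots,b'_{n})\equiv\varphi(\mathcal{S}_{c}(h))$ and $(-1)^{n}\mathcal{F}(h)$, and kill the cross summands by showing they lie in $W(V^{*})$. The gap is in that last step. You extract only $n-k+1$ generators from each cross-term cone --- the single terminal generator $\mathcal{D}(\{a_{l}:l\in\mathrm{Im}(f)\})$ of the $T_{1}$-factor together with the $n-k$ generators $b'_{g(i)}$ --- and your dimension count then needs $\{c(a_{l}):l\in\mathrm{Im}(f)\}$ to be linearly independent. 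The dependent case, which you defer to an induction you do not carry out, is a genuine hole as written; and it cannot be waved away, since the lemma is applied in Proposition \ref{prop:duality_map_factor} to the faces of an $(n+1)$-tuple $a_{1},\dots,a_{n+1}$, where dependent sub-tuples are unavoidable.

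The hole closes, with no case distinction and no induction, if you use all $n$ generators of each cross-term cone rather than $n-k+1$ of them. A cone occurring in $\mathcal{F}_{h}(f)\otimes\mathcal{G}(g)$ has the form $\Lambda(\mathcal{D}(U_{1}),\dots,\mathcal{D}(U_{k}),b'_{g(1)},\dots,b'_{g(n-k)})$ where $U_{i}=\{a_{f(\sigma(j))}\mid 1\leq j\leq i\}$ for some $\sigma\in\mathfrak{S}_{k}$. Every one of these $n$ generators is $\mathcal{D}(U)$ for a set $U$ with fewer than $n$ elements containing the single element $a_{f(\sigma(1))}$: each $U_{i}$ contains $a_{f(\sigma(1))}$ and has $i\leq k\leq n-1$ elements, and $\mathrm{Im}(h)\setminus\{a_{g(i)}\}$ contains $a_{f(\sigma(1))}$ because $g(i)\notin\mathrm{Im}(f)$. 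By the defining property of $\mathcal{D}\in D(c)$, all $n$ generators are therefore orthogonal to the one nonzero vector $c(a_{f(\sigma(1))})$, hence linearly dependent, hence the cone lies in $W(V^{*})$ --- regardless of any independence among the $c(a_{l})$. This sharper observation is exactly how the paper disposes of the cross terms; with it, what you call the hard part disappears.
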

\begin{proof}
It is enough to prove in the case where $Y=\{1,\dots,n\}$ and $h$
is the identity function. Let $\mathcal{S}_{d}$ be the standard conical
system attached to $d$ where $d:\{1,\dots n\}\to V^{*}\setminus\{0\}$
is defined by
\[
d(j)=\mathcal{D}(\{1,\dots,n\}\setminus\{j\}).
\]
Then by Lemma \ref{lem:fusion_product}, we have
\[
\sum_{k=0}^{n}\sum_{\substack{f\in S_{k}(\{1,\dots,n\})\\
g\in S_{n-k}(\{1,\dots,n\})\\
\mathrm{Im}f\cap\mathrm{Im}g=\emptyset
}
}(-1)^{k}\mathrm{sgn}(f,g)\mathcal{F}(f)\otimes\mathcal{S}_{d}(g)\in\ker\partial_{n}.
\]
Let $1\leq k\leq n-1$, $f\in S_{k}(\{1,\dots,n\})$ and $g\in S_{n-k}(\{1,\dots,n\}).$
Assume that $\mathrm{Im}f\cap\mathrm{Im}g=\emptyset$. Then we have
\[
\mathcal{F}(f)\otimes\mathcal{S}_{d}(g)=\sum_{\sigma\in\mathfrak{S}_{k}}\Lambda(a_{\sigma,1},\dots,a_{\sigma,k},d(g(1)),\dots d(g(n-k)))
\]
where $a_{\sigma,i}\in V^{*}\setminus\{0\}$ for $1\leq i\leq k$
is define by 
\[
a_{\sigma,i}=\mathcal{D}(\{f(\sigma(j))\mid1\leq j\leq i\}).
\]
We have 
\[
\Lambda(a_{\sigma,1},\dots,a_{\sigma,k},d(g(1)),\dots d(g(n-k)))\in W(V^{*})
\]
 for all $\sigma\in\mathfrak{S}_{k}$ since 
\[
\left\langle c(f(\sigma(1))),v\right\rangle =0
\]
 for all 
\[
v\in\{a_{\sigma,1},\dots,a_{\sigma,k},d(g(1)),\dots d(g(n-k))\}.
\]
 Therefore we have
\[
\sum_{k=1}^{n-1}\sum_{\substack{f\in S_{k}(\{1,\dots,n\})\\
g\in S_{n-k}(\{1,\dots,n\})\\
\mathrm{Im}f\cap\mathrm{Im}g=\emptyset
}
}(-1)^{k}\mathrm{sgn}(f,g)\mathcal{F}(f)\otimes\mathcal{S}_{d}(g)\in W.
\]
Thus we have
\[
\mathcal{F}(\emptyset)\otimes\mathcal{S}_{d}(\mathrm{id}_{n})+(-1)^{n}\mathcal{F}(\mathrm{id}_{n})\otimes\mathcal{S}_{d}(\emptyset)\in W(V^{*})+\ker\partial_{n}.
\]
Since we have 
\[
\mathcal{F}(\emptyset)\otimes\mathcal{S}_{d}(\mathrm{id}_{n})-\varphi\Lambda(c(1),\dots,c(n))\in W(V^{*})
\]
 and 
\[
\mathcal{F}(\mathrm{id}_{n})\otimes\mathcal{S}_{d}(\emptyset)=T_{1}(\mathcal{S}_{\mathcal{D}},\mathrm{id}_{n}),
\]
the lemma is proved.
\end{proof}

\begin{prop}
\label{prop:duality_map_factor}For $x\in W(V)+\ker\partial_{n}$,
we have $\varphi(x)\in W(V^{*})+\ker\partial_{n}$.\end{prop}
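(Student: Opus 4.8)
The plan is to reduce the statement to Lemma \ref{lem:dual_equation} by writing an arbitrary cycle as a boundary of a single cone. Since $\varphi$ sends every generating cone of $W(V)$ to $0$ by definition, we have $\varphi(W(V))=0$, so it suffices to prove $\varphi(\ker\partial_n)\subseteq W(V^*)+\ker\partial_n$. By the exactness of the complex (\ref{eq:resolution}) we have $\ker\partial_n=\partial_{n+1}(C_{n+1}(V))$, so it is enough to show
\[
\varphi\bigl(\partial_{n+1}\Lambda(a_0,\dots,a_n)\bigr)\in W(V^*)+\ker\partial_n
\]
for every cone $\Lambda(a_0,\dots,a_n)\in C_{n+1}(V)$, i.e.\ for all $a_0,\dots,a_n\in V\setminus\{0\}$.

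Fix such $a_0,\dots,a_n$ and set $Y=\{0,1,\dots,n\}$ with its natural order, together with the map $c\colon Y\to V\setminus\{0\}$, $c(i)=a_i$ (no injectivity of $c$ is needed). Choose any $\mathcal{D}\in D(c)$ (possible since $D(c)\neq\emptyset$) and let $\mathcal{F}$ be the conical system on $Y$ given by $\mathcal{F}(h)=T_1(\mathcal{S}_{\mathcal{D}},h)$, which is a conical system by Corollary \ref{cor:T1_conic_sytem}. The increasing $n$-chains of $Y$ are exactly $h_0,\dots,h_n$, where $h_i\in S_n(Y)$ enumerates $Y\setminus\{i\}$ in increasing order; thus $\mathcal{S}_c(h_i)=\Lambda(a_0,\dots,\hat{a_i},\dots,a_n)$ and $\partial_{n+1}\Lambda(a_0,\dots,a_n)=\sum_{i=0}^n(-1)^i\,\mathcal{S}_c(h_i)$. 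Applying Lemma \ref{lem:dual_equation} to each $h_i$ (with $\dim V=n$) gives $\mathcal{F}(h_i)+(-1)^n\varphi(\mathcal{S}_c(h_i))\in W(V^*)+\ker\partial_n$, that is,
\[
\varphi(\mathcal{S}_c(h_i))\equiv(-1)^{n+1}\,\mathcal{F}(h_i)\pmod{W(V^*)+\ker\partial_n},
\]
and hence $\varphi\bigl(\partial_{n+1}\Lambda(a_0,\dots,a_n)\bigr)\equiv(-1)^{n+1}\sum_{i=0}^n(-1)^i\mathcal{F}(h_i)$ modulo $W(V^*)+\ker\partial_n$.

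It therefore remains to check that $\sum_{i=0}^n(-1)^i\mathcal{F}(h_i)\in\ker\partial_n$. Since $\mathcal{F}$ is a conical system, $\partial_n\mathcal{F}(h_i)=\sum_{j=1}^n(-1)^{j+1}\mathcal{F}(h_i\circ d^j)$, and $h_i\circ d^j$ is the increasing $(n-1)$-chain enumerating $Y\setminus\{i,k\}$, where $k=j-1<i$ when $j\le i$ and $k=j>i$ when $j>i$. For a fixed unordered pair $p<q$, the chain enumerating $Y\setminus\{p,q\}$ thus occurs in exactly two terms of $\partial_n\sum_i(-1)^i\mathcal{F}(h_i)$: once coming from $h_q$ with sign $(-1)^{q}(-1)^{(p+1)+1}=(-1)^{p+q}$, and once from $h_p$ with sign $(-1)^{p}(-1)^{q+1}=(-1)^{p+q+1}$, so the two contributions cancel. (This is precisely the assertion that the boundary of the standard $n$-simplex is a cycle, transported through the conical system $\mathcal{F}$.) Hence $\partial_n\bigl(\sum_i(-1)^i\mathcal{F}(h_i)\bigr)=0$, which yields $\varphi\bigl(\partial_{n+1}\Lambda(a_0,\dots,a_n)\bigr)\in W(V^*)+\ker\partial_n$, completing the proof.

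The only genuinely delicate part is the sign bookkeeping: correctly listing the increasing $n$-chains $h_i$ of $Y$, matching the signs in $\partial_{n+1}\Lambda(a_0,\dots,a_n)$ and in Lemma \ref{lem:dual_equation}, and carrying out the cancellation in the last step. None of these is a real obstacle, but all signs must be tracked carefully; everything else is a direct application of the results of Section \ref{sec:Fan} established above.
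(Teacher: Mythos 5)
Your proof is correct and follows essentially the same route as the paper: reduce to a single boundary $\partial_{n+1}\Lambda(a_{0},\dots,a_{n})$ via exactness of the complex and $\varphi(W(V))=0$, replace each face $\varphi(\mathcal{S}_{c}(h_{i}))$ by $(-1)^{n+1}T_{1}(\mathcal{S}_{\mathcal{D}},h_{i})$ using Lemma \ref{lem:dual_equation}, and check that the alternating sum of these terms is a cycle. The only cosmetic difference is that you carry out the final cancellation by hand, whereas the paper identifies that sum as $\partial_{n+1}T_{1}(\mathcal{S}_{\mathcal{D}},\mathrm{id}_{n+1})$ and invokes $\partial_{n}\circ\partial_{n+1}=0$.
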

\begin{proof}
It is enough to prove that

\[
\varphi(\sum_{i=1}^{n+1}(-1)^{i+1}\Lambda(a_{1},\dots,\widehat{a_{i}},\dots,a_{n+1}))\in W(V^{*})+\ker\partial_{n}.
\]
We define $c:\{1,\dots,n+1\}\to V\setminus\{0\}$ by $c(i)=a_{i}.$
Take any element $\mathcal{D}\in D(c).$ By Lemma \ref{lem:dual_equation},
we have
\[
\varphi(\Lambda(a_{1},\dots,\widehat{a_{i}},\dots,a_{n+1}))+(-1)^{n}T_{1}(\mathcal{S}_{\mathcal{D}},d^{i}\circ{\rm id}_{n})\in W(V^{*})+\ker\partial_{n}
\]
for all $1\leq i\leq n+1$. Therefore it is enough to prove that 
\[
\sum_{i=1}^{n+1}(-1)^{i+1}T_{1}(\mathcal{S}_{\mathcal{D}},d^{i}\circ{\rm id}_{n})\in W(V^{*})+\ker\partial_{n}.
\]
Since
\[
\partial_{n+1}T_{1}(\mathcal{S}_{\mathcal{D}},{\rm id}_{n+1})=\sum_{i=1}^{n+1}(-1)^{i+1}T_{1}(\mathcal{S}_{\mathcal{D}},d^{i}\circ{\rm id}_{n}),
\]
the lemma is proved.
\end{proof}
By Proposition \ref{prop:duality_map_factor}, we can define 
\[
\varphi:C_{n}(V)/(W(V)+\ker\partial_{n})\rightarrow C_{n}(V^{*})/(W(V^{*})+\ker\partial_{n}).
\]
It is trivial that $\varphi\circ\varphi=\mathrm{id}$.

\subsection{\label{sub:Dual-fan-of-fundamental-domain-fan}The dual fan of a
fundamental domain fan.}

Let $V$ be an $n$-dimensional vector space over a field $L$, and
let $G$ be an abelian group which acts on $V\setminus\{0\}$ linearly.
We define the action of $G$ on $V^{*}$ by $\left\langle x,gy\right\rangle =\left\langle gx,y\right\rangle $
for $g\in G$, $x\in V$, and $y\in V^{*}$. In Section \ref{sec:Construction-of-Hecke},
we will consider the case where $L=\mathbb{Q}$, $V=K$ and $G=E_{K}$.
We identify $K$ and $K^{*}$ by the inner product $\left\langle x,y\right\rangle =\mathrm{Tr}(xy).$

We define the notion of a\emph{ }fundamental domain fan for $g_{1},\dots,g_{n-1}\in G$.
\begin{defn}
For $a\in V\setminus\{0\}$ and $g_{1},\dots,g_{n-1}\in G$, we call
$\mathbb{D}\in C_{n}(V)$ a\emph{ fundamental domain fan} for $(g_{1},\dots,g_{n-1})$
if
\[
\mathbb{D}-F(a;g_{1},\dots,g_{n-1})\in I_{n}(V^{*}\setminus\{0\},G)+W(V^{*})+\ker\partial_{n}.
\]
This condition does not depend on the choice of $a\in V\setminus\{0\}$.\end{defn}
\begin{thm}
\label{thm:dual_of_fandamental_domain_is_fandamental_domain}Let $\mathbb{D}\in C_{n}(V)$
be a fundamental domain fan for $(g_{1},\dots,g_{n-1})$. Then $\varphi(\mathbb{D})\in C_{n}(V^{*})$
is a fundamental domain fan for $(g_{1},\dots,g_{n-1})$.\end{thm}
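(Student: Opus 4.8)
The plan is to show that the duality map $\varphi$ respects the submodule $I_{n}+W+\ker\partial_{n}$, so that it descends to the quotient in which fundamental domain fans are defined, and then to compute the class of $\varphi(F(a;g_{1},\dots,g_{n-1}))$ explicitly and recognize it as that of another cubic fundamental domain fan.

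\emph{Step 1 ($\varphi$ preserves the group submodule).} First I would check $\varphi(I_{n}(V\setminus\{0\},G))\subseteq I_{n}(V^{*}\setminus\{0\},G)$. It suffices to treat a generator $\epsilon\Lambda(a_{1},\dots,a_{n})-\Lambda(a_{1},\dots,a_{n})$ with $\epsilon\in G$: if $a_{1},\dots,a_{n}$ are dependent, both terms die under $\varphi$; if they are independent with dual basis $b_{1},\dots,b_{n}$, then $\langle\epsilon a_{i},\epsilon^{-1}b_{j}\rangle=\langle a_{i},b_{j}\rangle=\delta_{ij}$, so the dual basis of $(\epsilon a_{i})$ is $(\epsilon^{-1}b_{i})$ and $\varphi$ sends the generator to $\epsilon^{-1}\Lambda(b_{1},\dots,b_{n})-\Lambda(b_{1},\dots,b_{n})\in I_{n}(V^{*}\setminus\{0\},G)$. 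Combined with Proposition \ref{prop:duality_map_factor}, this gives $\varphi\big(I_{n}(V\setminus\{0\},G)+W(V)+\ker\partial_{n}\big)\subseteq I_{n}(V^{*}\setminus\{0\},G)+W(V^{*})+\ker\partial_{n}$. Hence, applying $\varphi$ to $\mathbb{D}-F(a;g_{1},\dots,g_{n-1})\in I_{n}(V\setminus\{0\},G)+W(V)+\ker\partial_{n}$, it is enough to show that $\varphi(F(a;g_{1},\dots,g_{n-1}))$ is itself a fundamental domain fan for $(g_{1},\dots,g_{n-1})$.

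\emph{Step 2 (dualizing the cubic fundamental domain fan).} Write $F(a;g_{1},\dots,g_{n-1})=T(\mathcal{S}_{E})=\sum_{\sigma\in\mathfrak{S}_{n-1}}\mathrm{sgn}(\sigma)\mathcal{S}_{E}(f_{\sigma})$ where $E\colon B(n-1)\to V\setminus\{0\}$, $E(b)=g_{1}^{b_{1}}\cdots g_{n-1}^{b_{n-1}}a$, and each $f_{\sigma}\in S_{n}(B(n-1))$. I would then choose $\mathcal{D}\in D(E)$ with the extra equivariance $\mathcal{D}(e_{i}+U)=g_{i}^{-1}\mathcal{D}(U)$ whenever $e_{i}+U\in P(B(n-1))$. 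Such a $\mathcal{D}$ exists: every $U\in P(B(n-1))$ is uniquely $U=\big(\sum_{i\in J}e_{i}\big)+U_{0}$ with $0=\min U_{0}$ and $J=\{i:b_{i}=1\text{ for all }b\in U\}$; choose $\mathcal{D}(U_{0})$ arbitrarily in the annihilator of $\mathrm{span}\{E(b):b\in U_{0}\}$ (nonzero since $\#U_{0}<n$), respectively arbitrarily nonzero when $\#U_{0}\geq n$, and set $\mathcal{D}(U)=\big(\prod_{i\in J}g_{i}^{-1}\big)\mathcal{D}(U_{0})$; the identity $E\big((\sum_{i\in J}e_{i})+b_{0}\big)=(\prod_{i\in J}g_{i})\,E(b_{0})$ together with $\langle hx,h^{-1}y\rangle=\langle x,y\rangle$ for $h\in G$ shows this $\mathcal{D}$ still lies in $D(E)$. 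Now put $\mathcal{F}'(h)=T_{1}(\mathcal{S}_{\mathcal{D}},h)$, a conical system on $B(n-1)$ by Corollary \ref{cor:T1_conic_sytem}. Applying Lemma \ref{lem:dual_equation} (with $Y=B(n-1)$, $c=E$) to each $h=f_{\sigma}$ and summing with the signs $\mathrm{sgn}(\sigma)$, and using $\sum_{\sigma}\mathrm{sgn}(\sigma)\mathcal{F}'(f_{\sigma})=T(\mathcal{F}')$, yields
\[
T(\mathcal{F}')+(-1)^{n}\varphi\big(F(a;g_{1},\dots,g_{n-1})\big)\in W(V^{*})+\ker\partial_{n}.
\]

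\emph{Step 3 (recognizing the class) and conclusion.} Since $f_{1}=e_{i}+f_{2}$ forces $g_{\sigma,f_{1}}(j)=e_{i}+g_{\sigma,f_{2}}(j)$, the equivariance of $\mathcal{D}$ gives $\mathcal{F}'(f_{1})=g_{i}^{-1}\mathcal{F}'(f_{2})$, i.e. $\mathcal{F}'\in A(g_{1}^{-1},\dots,g_{n-1}^{-1})$. By Lemma \ref{lem:equivalence-of-T} (with $n$ replaced by $n-1$), $T(\mathcal{F}')-F(a';g_{1}^{-1},\dots,g_{n-1}^{-1})\in I_{n}(V^{*}\setminus\{0\},G)+\ker\partial_{n}$ for any $a'\in V^{*}\setminus\{0\}$, because $F(a';g_{1}^{-1},\dots,g_{n-1}^{-1})=T(\mathcal{S}_{E'})$ with $\mathcal{S}_{E'}\in A(g_{1}^{-1},\dots,g_{n-1}^{-1})$; and Lemma \ref{lem:another_basis} with $C=-I_{n-1}$ (so $\det C=(-1)^{n-1}$) gives $F(a';g_{1}^{-1},\dots,g_{n-1}^{-1})-(-1)^{n-1}F(a';g_{1},\dots,g_{n-1})\in I_{n}(V^{*}\setminus\{0\},G)+\ker\partial_{n}$. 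Feeding these into the display of Step 2, the twists $(-1)^{n}$ and $(-1)^{n-1}$ multiply to $-1$, cancelling the overall sign, so
\[
\varphi\big(F(a;g_{1},\dots,g_{n-1})\big)-F(a';g_{1},\dots,g_{n-1})\in I_{n}(V^{*}\setminus\{0\},G)+W(V^{*})+\ker\partial_{n},
\]
which says exactly that $\varphi(F(a;g_{1},\dots,g_{n-1}))$ is a fundamental domain fan for $(g_{1},\dots,g_{n-1})$; combined with Step 1 this proves the theorem. I expect the main obstacle to be Step 2: producing a single $\mathcal{D}\in D(E)$ that is simultaneously orthogonal in the required sense and $G$-equivariant (so that $\mathcal{F}'$ lands in $A(g_{1}^{-1},\dots,g_{n-1}^{-1})$, not merely in $A_{n-1}$), and then tracking the two independent sign twists so that they indeed cancel.
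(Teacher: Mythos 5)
Your proposal is correct and follows essentially the same route as the paper's proof: reduce to showing $\varphi(F(a;g_{1},\dots,g_{n-1}))$ is again a fundamental domain fan, choose an equivariant $\mathcal{D}\in D(c)$, apply Lemma \ref{lem:dual_equation} to identify $\varphi(F(a;g_{1},\dots,g_{n-1}))$ with $(-1)^{n-1}T(\mathcal{F})$ modulo $W(V^{*})+\ker\partial_{n}$, and then combine Lemma \ref{lem:equivalence-of-T} and Lemma \ref{lem:another_basis} to match signs. The only differences are that you spell out two points the paper leaves implicit — that $\varphi$ carries $I_{n}(V\setminus\{0\},G)$ into $I_{n}(V^{*}\setminus\{0\},G)$, and an explicit construction of the equivariant $\mathcal{D}$ — both of which are correct.
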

\begin{proof}
It is enough to prove that 
\[
\varphi(F(a;g_{1},\dots,g_{n-1}))-F(b;g_{1},\dots,g_{n-1})\in I_{n}(V^{*}\setminus\{0\},G)+W(V^{*})+\ker\partial_{n}
\]
for $a\in V\setminus\{0\}$ and $b\in V^{*}\setminus\{0\}$. We define
$c:B(n-1)\to V\setminus\{0\}$ by
\[
c((b_{1},\dots,b_{n-1}))=g_{1}^{b_{1}}\cdots g_{n-1}^{b_{n-1}}a.
\]
Let
\[
\mathcal{D}:P(B(n-1))\to V^{*}\setminus\{0\}
\]
be any map which satisfies the following conditions:
\begin{enumerate}
\item \label{enu:1}$\mathcal{D}\in D(c).$
\item \label{enu:2}If $U_{1},U_{2}\in P(B(n-1))$ satisfy 
\[
U_{1}=\{z+e_{i}\mid z\in U_{2}\}\ \ \ \ \ (1\leq i\leq n-1)
\]
then $\mathcal{D}(U_{1})=g_{i}^{-1}\mathcal{D}(U_{2})$.
\end{enumerate}
The existence of such a map is obvious (note that $\emptyset\notin P(B(n-1))$).
We define the conical system $\mathcal{F}$ on $B(n-1)$ with values
in $\bigcup_{m}C_{m}(V^{*})$ by
\[
\mathcal{F}(f)=T_{1}(\mathcal{S}_{\mathcal{D}},f)
\]
where $f\in S_{k}(B(n-1))$ and $0\leq k\leq n.$ Then we have $\mathcal{F}\in A(g_{1}^{-1},\dots,g_{n}^{-1})$.
By Lemma \ref{lem:dual_equation}, we have
\begin{align*}
\varphi F(a;g_{1},\dots,g_{n-1}) & =\sum_{\sigma\in\mathfrak{S}_{n-1}}\mathrm{sgn}(\sigma)\varphi\Lambda(c(f_{\sigma}(1)),c(f_{\sigma}(2)),\dots,c(f_{\sigma}(n)))\\
 & =(-1)^{n-1}\sum_{\sigma\in\mathfrak{S}_{n-1}}\mathrm{sgn}(\sigma)T_{1}(\mathcal{S}_{\mathcal{D}},f_{\sigma})\\
 & =(-1)^{n-1}\sum_{\sigma\in\mathfrak{S}_{n-1}}\mathrm{sgn}(\sigma)\mathcal{F}(f_{\sigma})\\
 & =(-1)^{n-1}T(\mathcal{F})
\end{align*}
in $C_{n}/(W(V^{*})+\ker\partial_{n})$, where $f_{\sigma}\in S_{n}(B(n-1))$
is defined by
\[
f_{\sigma}(j)=\sum_{k=1}^{j-1}e_{\sigma(k)}.
\]
By Lemma \ref{lem:equivalence-of-T}, we have
\[
T(\mathcal{F})-F(b;g_{1}^{-1},\dots,g_{n-1}^{-1})\in I_{n}(V^{*}\setminus\{0\},G)+\ker\partial_{n}.
\]
By Lemma \ref{lem:another_basis} we have 
\[
F(b;g_{1}^{-1},\dots,g_{n-1}^{-1})-(-1)^{n-1}F(b;g_{1},\dots,g_{n-1})\in I_{n}(V^{*}\setminus\{0\},G)+\ker\partial_{n}.
\]
Hence we have 
\[
\varphi F(a;g_{1},\dots,g_{n-1})-F(b;g_{1},\dots,g_{n-1})\in I_{n}(V^{*}\setminus\{0\},G)+W(V^{*})+\ker\partial_{n},
\]
and the theorem is proved.
\end{proof}

\subsection{\label{sub:Indicator-function-of-fan}The property of the characteristic
function of a fan}

Let $V$ be an $n$-dimensional vector space over $\mathbb{Q}$. We
say that a set $X\subset V$ is thin if it is contained in a finite
union of proper vector subspaces of $V$. Let $(\leq)$ be a total
order on the set $V$ such that $(V,\leq)$ is an ordered vector space.
For $v\in V$, we say that $v$ is positive if $0<v$. For a nonzero
vector $v\in V$, we define $P(v)$ by
\[
P(v)=\begin{cases}
v & v\mbox{ is positive}\\
-v & \mbox{otherwise}.
\end{cases}
\]
Note that $P(v)$ is always positive if $v\neq0$. For a cone $\Lambda\in C_{r}(V)$,
we define $P(\Lambda)\in C_{r}(V)$ by
\[
P(\Lambda(u_{1},\dots,u_{r}))=\Lambda(P(u_{1}),\dots,P(u_{r})).
\]
For a fan $\mathbb{B}=\sum_{j}c_{j}\Lambda_{j}\in C_{r}(V)$, we define
$P(\mathbb{B})\in C_{r}(V)$ by $P(\mathbb{B})=\sum_{j}c_{j}P(\Lambda_{j})$.
Then we have the following lemma.
\begin{lem}
\label{lem:W-Ker-shrink}Let $\mathbb{B}\in W(V)+\ker\partial_{n}$
be a fan. Then $\mathfrak{C}(P(\mathbb{B}),r)$ has a thin support.\end{lem}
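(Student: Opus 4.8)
The plan is to treat the two summands making up $\mathbb{B}$ separately and then to reduce everything to one (essentially classical) identity for characteristic functions of simplicial cones whose generators are all positive. Since $C_{n}(V)$ is free on the cone symbols, $P$ is a well-defined $\mathbb{Z}$-linear endomorphism of $C_{n}(V)$, and it preserves $W(V)$: if $a_{1},\dots,a_{n}$ are linearly dependent then so are $P(a_{1}),\dots,P(a_{n})$, because $P(a_{j})=\pm a_{j}$. As $\mathfrak{C}$ kills every generator of $W(V)$ (a non-simple cone has zero characteristic function), it also kills $P(W(V))$. Writing $\mathbb{B}=\mathbb{B}'+\mathbb{B}''$ with $\mathbb{B}'\in W(V)$ and $\mathbb{B}''\in\ker\partial_{n}$, we get $\mathfrak{C}(P(\mathbb{B}))=\mathfrak{C}(P(\mathbb{B}''))$, so it suffices to prove the lemma for $\mathbb{B}\in\ker\partial_{n}$.

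By the exactness of the complex (\ref{eq:resolution}), $\ker\partial_{n}$ equals the image of $\partial_{n+1}$, so $\mathbb{B}=\partial_{n+1}\mathbb{C}$ for some $\mathbb{C}\in C_{n+1}(V)$, and by linearity we may assume $\mathbb{C}=\Lambda(a_{1},\dots,a_{n+1})$. A direct inspection of cone symbols shows $P\circ\partial_{n+1}=\partial_{n+1}\circ P$, hence
\[
P(\mathbb{B})=\partial_{n+1}\Lambda(b_{1},\dots,b_{n+1}),\qquad b_{j}:=P(a_{j}),
\]
where every $b_{j}$ is positive. Thus the lemma reduces to the claim that $\mathfrak{C}\bigl(\partial_{n+1}\Lambda(b_{1},\dots,b_{n+1})\bigr)$ has thin support whenever $b_{1},\dots,b_{n+1}$ are positive.

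To prove this claim I would argue as follows. If $b_{1},\dots,b_{n+1}$ do not span $V$, then every cone $\Lambda(b_{1},\dots,\widehat{b_{j}},\dots,b_{n+1})$ is non-simple and the whole expression vanishes. If they span $V$, they satisfy a relation $\sum_{j=1}^{n+1}c_{j}b_{j}=0$ that is unique up to scalar; after dropping the (necessarily non-simple) terms with $c_{j}=0$ we are reduced to the case $c_{j}\ne 0$ for all $j$, in which every $n$ of the $b_{j}$ form a basis. The one place the order on $V$ is used is here: the $c_{j}$ cannot all have the same sign, since a positive rational combination of positive vectors is again positive in $(V,\le)$, hence nonzero. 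So $I_{+}=\{j:c_{j}>0\}$ and $I_{-}=\{j:c_{j}<0\}$ are both nonempty, and the two families of simplicial cones $\{C(b_{i}:i\ne j)\mid j\in I_{+}\}$ and $\{C(b_{i}:i\ne j)\mid j\in I_{-}\}$ are two triangulations of one and the same rational cone $C(b_{1},\dots,b_{n+1})$. Comparing the two decompositions, the orientations being exactly what makes the signs agree, gives
\[
\mathfrak{C}\bigl(\partial_{n+1}\Lambda(b_{1},\dots,b_{n+1})\bigr)(x)=0\qquad\text{for all }x\notin\bigcup_{\#S=n-1}\mathrm{span}\{b_{i}:i\in S\}.
\]
Since the right-hand exceptional set is a finite union of proper subspaces of $V$, the support is thin and the lemma follows.

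The crux, and the step I expect to take the most work, is this last cone-decomposition identity in arbitrary dimension, together with a tidy treatment of the degenerate configurations in which some $n$-subset of the $b_{j}$ fails to be a basis. Its conceptual content is precisely the mixed-sign property of the relation $\sum c_{j}b_{j}=0$ supplied by positivity; the remainder is the standard bookkeeping of triangulations of a simplicial cone, which one can run by induction on $n$ (slicing along a hyperplane spanned by $n-1$ of the generators) or quote from the theory of Shintani cone decompositions as in \cite{MR0427231}.
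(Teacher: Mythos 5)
Your proposal follows essentially the same route as the paper's proof: reduce by exactness and the vanishing of $\mathfrak{C}$ on $W(V)$ to a single term $P(\partial_{n+1}\Lambda(u_{0},\dots,u_{n}))$ with positive generators, use positivity to force mixed signs in the unique linear relation among them, and conclude by the two-triangulations identity for the resulting circuit of cones. The two points you defer are handled in the paper by (i) splitting off the generators not occurring in the relation, so that $\mathfrak{C}(\mathbb{B}')$ factors as $\mathfrak{C}(\mathbb{B}_{k})$ on the subspace $V_{k}$ times the indicator of an orthant in the remaining directions, and (ii) quoting Proposition 2 of \cite{MR2392823} for the nondegenerate cone-decomposition identity on $V_{k}$ rather than proving it, so your identification of that identity as the crux matches exactly what the paper outsources.
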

\begin{proof}
It is enough to prove in the case where $\mathbb{B}=\partial_{n+1}\Lambda(u_{0},u_{1},\dots,u_{n})$
with $u_{0},\dots,u_{n}\in V\setminus\{0\}$. We may assume that $u_{0},\cdots,u_{n}$
are positive by changing $u_{j}$ to $P(u_{j})$. If $u_{0},\dots,u_{n}$
is not a generator of $V$ then the lemma is obvious. Thus we may
assume that $u_{0},\dots,u_{n}$ is a generator of $V$. Further,
we may assume that $u_{1},\dots,u_{n}$ are linearly independent by
permutating $u_{0},\dots,u_{n}$, and that $u_{0}=a_{1}u_{1}+\cdots+a_{k}u_{k}$
for some $1\leq k\leq n$ and $a_{1},\dots,a_{k}\in\mathbb{Q}\setminus\{0\}$
by permutating $u_{1},\dots,u_{n}$. We denote by $V_{k}$ the subspace
of $V$ generated by $u_{1},\dots,u_{k}$. We set $\mathbb{B}_{k}=\partial_{k+1}\Lambda(u_{0},u_{1},\dots,u_{k})\in C_{k}(V_{k})$
and $\mathbb{B}'=\sum_{j=0}^{k}(-1)^{j}\Lambda(u_{0},\dots,u_{j-1},u_{j+1},\dots,u_{n})$.
Since $\mathbb{B}-\mathbb{B}'\in W(V)$, it is enough to prove that
$\mathfrak{C}(\mathbb{B}')$ has a thin support. By the definition
we have
\[
\mathfrak{C}(\mathbb{B}')(w+t_{k+1}u_{k+1}+\cdots+t_{n}u_{n})=\begin{cases}
\mathfrak{C}(\mathbb{B}_{k})(w) & t_{j}>0\mbox{ for all }k<j\leq n\\
0 & \mbox{otherwise}
\end{cases}
\]
where $w\in V_{k}$ and $t_{k+1},\dots,t_{n}\in\mathbb{R}^{n}$. Since
$\mathfrak{C}(\mathbb{B}_{k}):V_{K}\to\mathbb{Z}$ has a thin support
by Proposition 2 in \cite{MR2392823}, $\mathfrak{C}(\mathbb{B}')$
has a thin support. Thus the lemma is proved.
\end{proof}

\section{\label{sec:Construction-of-Hecke}Expression of Hecke L-functions
of totally real fields}

\subsection{Hecke characters}

Let $K$ be a totally real field and $\chi=\prod_{v}\chi_{v}$ a Hecke
character of $K$. Let $\mathfrak{m}$ be the conductor of $\chi$,
and let $\mathfrak{d}$ be the different of $K$. For a place $v$
of $K$, $K_{v}$ will denote the completion of $K$ at $v$. We set
$\chi_{\infty}=\prod_{v}\chi_{v}$ where $v$ runs through all infinite
places. For a finite place $v$, $r_{v}$ will denote the maximal
compact subring of $K_{v}$, and $\pi_{v}$ will denote a prime element
of $r_{v}$. Let $\mathfrak{p}_{v}$ be the prime ideal of $O_{K}$
corresponding to a finite place $v$. For a fractional ideal $\mathfrak{a}=\prod_{v}\mathfrak{p}_{v}^{f_{v}}$
of $K$, we define $I(\mathfrak{a})\subset K(\mathbb{A}_{f})^{\times}$
by 
\[
I(\mathfrak{a})=\{(a_{v})_{v}\in K(\mathbb{A}_{f})^{\times}\mid a_{v}\in\pi_{v}^{f_{v}}r_{v}^{\times}\mbox{ for all finite places of }K\}.
\]
We define the standard function $\Phi_{\chi}\in\mathcal{S}(K(\mathbb{A}_{f}))$
attached to $\chi$ by $\Phi_{\chi}=\prod_{v}\Phi_{v}$ where $\Phi_{v}\in\mathcal{S}(K_{v})$
is defined by
\[
\Phi_{v}(x)=\begin{cases}
1 & x\in r_{v}\\
0 & \mbox{otherwise}
\end{cases}
\]
if $\chi_{v}$ is unramified and
\[
\Phi_{v}(x)=\begin{cases}
\chi_{v}^{-1}(x) & x\in r_{v}^{\times}\\
0 & \mbox{otherwise}
\end{cases}
\]
if $\chi_{v}$ is ramified. For a fractional ideal $\mathfrak{b}$
of $K$, we define $\Phi_{\chi,\mathfrak{b}}:\mathcal{S}(K(\mathbb{A}_{f}))$
by 
\[
\Phi_{\chi,\mathfrak{b}}(x)=\chi(b)\Phi_{\chi}(bx)
\]
where $b\in I(\mathfrak{b})$. It is obvious that $\Phi_{\chi,\mathfrak{b}}$
does not depend on the choice of $b$. Let $I_{\mathfrak{m}}$ be
the group of fractional ideals of $K$ relatively prime to $\mathfrak{m}$,
and let $\chi_{I}:I_{\mathfrak{m}}\to\mathbb{C}^{\times}$ be the
character of $I_{\mathfrak{m}}$ corresponding to $\chi$. Note that
$\chi_{I}$ is defined by 
\[
\chi_{I}(\mathfrak{a})=\prod_{\substack{v:{\rm finite}\\
v\nmid\mathfrak{m}
}
}\chi_{v}(a_{v})
\]
where $(a_{v})_{v}\in I(\mathfrak{a})$. For $x\in K^{\times}$ and
a fractional ideal $\mathfrak{b}$ of $K$, we have
\[
\chi_{\infty}^{-1}(x)\Phi_{\chi,\mathfrak{b}}(x)=\begin{cases}
\chi_{I}(x\mathfrak{b}) & x\in\mathfrak{b}^{-1}\mbox{ and }x\mathfrak{b}\in I_{m}\\
0 & \mbox{otherwise}.
\end{cases}
\]
Let $\rho=(\rho_{1},\dots,\rho_{n}):K(\mathbb{R})\to\mathbb{R}^{n}$
be a natural ring isomorphism, where $n$ is the degree of $K$. Let
$C$ be an ideal class of $K$. We define the partial Hecke $L$-functions
$L(s,\chi,C)$ by
\[
L(s,\chi,C)=\sum_{\substack{\mathfrak{a}\subset O_{K}\\
\mathfrak{a}\in C\cap I_{\mathfrak{m}}
}
}\chi_{I}(\mathfrak{a})N(\mathfrak{a})^{-s}.
\]
Assume that $\mathfrak{b}\in C$. Then we have
\begin{align}
L(s,\chi,C) & =\sum_{\mathfrak{a}}\chi_{I}(\mathfrak{a})N(\mathfrak{a})^{-s}\nonumber \\
 & =N(\mathfrak{b})^{-s}\sum_{\mathfrak{a}}\chi_{I}(\mathfrak{ab}^{-1}\mathfrak{b})N(\mathfrak{ab}^{-1})^{-s}\nonumber \\
 & =N(\mathfrak{b})^{-s}\sum_{x\in(\mathfrak{b}^{-1}\setminus\{0\})/O_{K}^{\times}}\chi(x\mathfrak{b})\left|\rho(x)\right|^{-s}\nonumber \\
 & =N(\mathfrak{b})^{-s}\sum_{x\in K^{\times}/O_{K}^{\times}}\chi_{\infty}^{-1}(x)\Phi_{\chi,\mathfrak{b}}(x)\left|\rho(x)\right|^{-s}.\label{eq:heckeL}
\end{align}

The Fourier transform of $\Phi_{\chi,\mathfrak{b}}$ is given as follows.
For $y_{0}\in I((\mathfrak{md})^{-1})$, we set 
\[
k(\chi)=\chi(y_{0})^{-1}\int_{K(\mathbb{A}_{f})}\psi(\mathrm{Tr}(xy_{0}))\Phi_{\chi}(x)dx.
\]
Note that $k(\chi)$ does not depend on the choice of $y_{0}$. Then
we have

\[
\hat{\Phi}_{\chi,\mathfrak{b}}=N(b)k(\chi)\Phi_{\chi^{-1},\mathfrak{mdb}^{-1}}.
\]

\subsection{\label{sub:regularization}Cassou-Nogu\`es' trick and the regularization}

Let $\Lambda\in C_{n}(V)$ be some cone. The problem is that $\Phi_{\chi,\mathfrak{b}}$
is not necessary regular with respect to $\Lambda$. In this section
we modify $\Phi_{\chi,\mathfrak{b}}$ to obtain a regular function.
Let $\mathbb{C}[I_{K}]$ be the group ring of the ideal group $I_{K}$
of $K$. We extend the definition of $\Phi_{\chi,\gamma}$ for $\gamma=\sum_{j}n_{j}\mathfrak{b}_{j}\in\mathbb{C}[I_{K}]$
where $n_{j}\in\mathbb{C}$ and $\mathfrak{b}\in I_{K}$ by
\[
\Phi_{\chi,\gamma}=\sum_{j}n_{j}\Phi_{\chi,\mathfrak{b}_{j}}.
\]

We use the notation $\hat{\gamma}=\sum_{j}n_{j}N(\mathfrak{b})\mathfrak{b}_{j}^{-1}\in\mathbb{C}[I_{K}]$
for $\gamma=\sum_{j}n_{j}\mathfrak{b}_{j}\in\mathbb{C}[I_{K}]$. Note
that the Fourier transform $\hat{\Phi}_{\chi,\gamma}$ of $\Phi_{\chi,\gamma}$
is given by
\[
\hat{\Phi}_{\chi,\gamma}=k(\chi)\Phi_{\chi^{-1},\mathfrak{md}\hat{\gamma}}.
\]
We say that a prime ideal $\mathfrak{p}$ is degree $m$ if $N(\mathfrak{p})=p^{m}$
where $p$ is a rational prime. For a fractional ideal $\mathfrak{a}=\prod_{v}\mathfrak{p}_{v}^{f_{v}}$
of $K$, we define a set $Q(\mathfrak{a})$ of rational primes by
\[
Q(\mathfrak{a})=\{p\mid\mbox{There exists a finite place }v\mbox{ such that }f_{v}\neq0\mbox{ and }\mathfrak{p}_{v}\mid p\}.
\]
The following is the same technique used in \cite{MR524276}.
\begin{lem}
Let $\mathfrak{b}$ a fractional ideal of $K$, and let $(u_{1},\dots,u_{n})\in O_{K}^{n}$
be a basis of $K$ as a vector space over $\mathbb{Q}$. We define
a finite set $Z$ of rational primes by
\[
Z=Q(\mathfrak{b})\cup Q(\mathfrak{m})\cup Q(u_{1})\cup\cdots\cup Q(u_{n}).
\]
Let $\mathfrak{p}$ be a prime ideal of degree $1$ such that $N(\mathfrak{p})\notin Z$.
Then the pair $(\Phi_{\chi,\gamma\mathfrak{b}},\Lambda(u_{1},\dots,u_{n}))$
satisfies Condition $P_{2}$, where
\[
\gamma=(1-N(\mathfrak{p})\chi_{I}(\mathfrak{p})\mathfrak{p}^{-1})\in\mathbb{C}[I_{K}].
\]
\end{lem}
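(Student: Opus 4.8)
The plan is to unwind the definitions so that Condition $P_{2}$ for $\Phi_{\chi,\gamma\mathfrak{b}}$ becomes a short computation at the rational prime below $\mathfrak{p}$, where the coefficient $N(\mathfrak{p})\chi_{I}(\mathfrak{p})$ is exactly what cancels the effect of the twist by $\mathfrak{p}^{-1}$. Since $\gamma=1-N(\mathfrak{p})\chi_{I}(\mathfrak{p})\,\mathfrak{p}^{-1}$ in $\mathbb{C}[I_{K}]$, we have $\Phi_{\chi,\gamma\mathfrak{b}}=\Phi_{\chi,\mathfrak{b}}-N(\mathfrak{p})\chi_{I}(\mathfrak{p})\,\Phi_{\chi,\mathfrak{p}^{-1}\mathfrak{b}}$, so by linearity of the integral it is enough to prove, for every $v\in K(\mathbb{A}_{f})$ and every $1\le j\le n$, that $\int_{\mathbb{A}_{f}}\Phi_{\chi,\mathfrak{p}^{-1}\mathfrak{b}}(v+xu_{j})\,dx=N(\mathfrak{p})^{-1}\chi_{I}(\mathfrak{p})^{-1}\int_{\mathbb{A}_{f}}\Phi_{\chi,\mathfrak{b}}(v+xu_{j})\,dx$. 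Each side converges because $x\mapsto\Phi_{\chi,\mathfrak{c}}(v+xu_{j})$ is a compactly supported locally constant function on $\mathbb{A}_{f}$ (here $u_{j}\ne0$).

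First I would localize. Because $\mathfrak{p}$ has degree one, $p_{0}=N(\mathfrak{p})$ is a rational prime with residue field $\mathbb{F}_{p_{0}}$ at the place $v_{0}$ for which $\mathfrak{p}=\mathfrak{p}_{v_{0}}$, and the hypothesis $N(\mathfrak{p})\notin Z$ says $p_{0}\notin Q(\mathfrak{b})\cup Q(\mathfrak{m})\cup Q(u_{1})\cup\cdots\cup Q(u_{n})$; hence $\mathfrak{b}$ is prime to $p_{0}$, $\chi$ is unramified at every place above $p_{0}$, and each $u_{j}$ is a unit at every place above $p_{0}$. Choose ideles $b\in I(\mathfrak{b})$ and $b'\in I(\mathfrak{p}^{-1}\mathfrak{b})$ that agree at all finite places $w\nmid p_{0}$ and have $b_{w}=b'_{w}=1$ for all $w\mid p_{0}$ except $b'_{v_{0}}=\pi_{v_{0}}^{-1}$. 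Writing $\Phi_{\chi,\mathfrak{c}}(x)=\chi(c)\prod_{w}\Phi_{w}(c_{w}x_{w})$ with $\Phi_{w}=\bm{1}_{r_{w}}$ at every $w\mid p_{0}$, and decomposing $\mathbb{A}_{f}=\prod_{p}\mathbb{Q}_{p}$, one gets $\int_{\mathbb{A}_{f}}\Phi_{\chi,\mathfrak{c}}(v+xu_{j})\,dx=\chi(c)\prod_{p}J_{p}(c)$ with $J_{p}(c)=\int_{\mathbb{Q}_{p}}\prod_{w\mid p}\Phi_{w}(c_{w}(v_{w}+x_{p}(u_{j})_{w}))\,dx_{p}$; the factors $J_{p}$ with $p\ne p_{0}$ are identical for $c=b$ and $c=b'$, so the comparison collapses to $J_{p_{0}}$ together with the scalars $\chi(b)$ and $\chi(b')$.

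Next I would handle these two pieces. The idele $b/b'$ has $v_{0}$-component $\pi_{v_{0}}$ and all others $1$, so it lies in $I(\mathfrak{p})$, and since $\mathfrak{p}$ is prime to $\mathfrak{m}$ the definition of $\chi_{I}$ gives $\chi(b)/\chi(b')=\chi_{I}(\mathfrak{p})$. For $J_{p_{0}}$, the factors coming from primes $w\mid p_{0}$ with $w\ne v_{0}$ are constant as $x_{p_{0}}$ runs over any coset of $\mathbb{Z}_{p_{0}}$ in $\mathbb{Q}_{p_{0}}$ (since $(u_{j})_{w}\in r_{w}$ and $\mathbb{Z}_{p_{0}}\subseteq r_{w}$, adding $t(u_{j})_{w}$ with $t\in\mathbb{Z}_{p_{0}}$ does not change membership in $r_{w}$), while the $v_{0}$-factor restricts $x_{p_{0}}$ to a $\mathbb{Z}_{p_{0}}$-coset when $c_{v_{0}}=1$ and to a sub-coset of index $N(\mathfrak{p})$ when $c_{v_{0}}=\pi_{v_{0}}^{-1}$ — and here the residue-degree-one hypothesis guarantees that the two conditions are nonempty together and that the second has index $N(\mathfrak{p})$ in the first. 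Since the other factors are constant on the larger coset, this forces $J_{p_{0}}(b')=N(\mathfrak{p})^{-1}J_{p_{0}}(b)$; multiplying the two ratios gives the identity from the first paragraph (the degenerate case in which the common factors already vanish needs no argument, both terms then being zero), and hence $\int_{\mathbb{A}_{f}}\Phi_{\chi,\gamma\mathfrak{b}}(v+xu_{j})\,dx=0$.

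The argument is essentially bookkeeping, and the one place that needs genuine care is the $p_{0}$-adic integral $J_{p_{0}}$: it does not factor over the primes of $K$ above $p_{0}$, so one must check that twisting by $\mathfrak{p}^{-1}$ disturbs only the $v_{0}$-factor and scales it by $N(\mathfrak{p})^{-1}$ without interfering with the other factors. This is precisely where all the hypotheses are used: $\mathfrak{p}$ of degree one (residue degree one) is what makes the relevant sub-coset have index $N(\mathfrak{p})$, and $p_{0}\notin Q(\mathfrak{b})\cup Q(\mathfrak{m})\cup Q(u_{1})\cup\cdots\cup Q(u_{n})$ makes $\mathfrak{b}$ prime to $p_{0}$, $\chi$ unramified above $p_{0}$, and the $u_{j}$ units above $p_{0}$. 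Once this is in place, the scaling of Haar measure by $N(\mathfrak{p})^{-1}$ together with $\chi(b)/\chi(b')=\chi_{I}(\mathfrak{p})$ produce the cancellation automatically.
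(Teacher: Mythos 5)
Your proof is correct, and the engine of the cancellation is the same one the paper uses: the twist by $\mathfrak{p}^{-1}$ cuts the support of the integrand along the line $v+\mathbb{A}_{f}u_{j}$ down by a factor of $N(\mathfrak{p})$ (this is exactly where the degree-one hypothesis enters, via the surjectivity of $\mathbb{Z}_{p_{0}}\to r_{v_{0}}/\pi_{v_{0}}r_{v_{0}}\cong\mathbb{F}_{p_{0}}$) while contributing an extra $\chi_{I}(\mathfrak{p})^{-1}$, so the coefficient $N(\mathfrak{p})\chi_{I}(\mathfrak{p})$ kills the difference. What differs is the reduction to that local computation. The paper chooses a positive integer $b$ with $bu_{j}\in\mathfrak{m}\mathfrak{b}^{-1}$ and $b\notin\mathfrak{p}$, uses invariance of the integrand under translation by $pb\hat{\mathbb{Z}}$ to replace the integral by the finite sums $\sum_{k=0}^{p-1}\Phi_{\chi,\gamma\mathfrak{b}}(w+kbu_{j})$, and kills each sum by noting that $\Phi_{\chi,\mathfrak{b}}$ is constant along the progression while $\Phi_{\chi,\mathfrak{p}^{-1}\mathfrak{b}}$ is supported at exactly one of the $p$ terms with value $\chi_{I}(\mathfrak{p})^{-1}\Phi_{\chi,\mathfrak{b}}$. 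You instead factor the integral as $\chi(c)\prod_{p}J_{p}(c)$ over rational primes with compatible ideles $b$ and $b'$ and compare the single factor $J_{p_{0}}$ together with the scalar $\chi(b)/\chi(b')=\chi_{I}(\mathfrak{p})$. Your version has to confront explicitly the fact that $J_{p_{0}}$ does not split over the places above $p_{0}$ --- which you handle correctly by showing that the conditions at $w\mid p_{0}$, $w\neq v_{0}$, are constant on $\mathbb{Z}_{p_{0}}$-cosets, so that only the $v_{0}$-factor is rescaled --- whereas the paper's arithmetic-progression trick sidesteps this issue. Both arguments are complete; the paper's is shorter, while yours makes the purely local-at-$p_{0}$ nature of the cancellation more visible.
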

\begin{proof}
Let $w\in K(\mathbb{A}_{f})$ and $u\in\{u_{1},\dots,u_{n}\}$. Put
$p=N(\mathfrak{p})$. We need to prove that
\[
\int_{\mathbb{A}_{f}}\Phi_{\chi,\gamma\mathfrak{b}}(w+xu)dx=0.
\]
There exists a positive integer $b$ such that $bu\in\mathfrak{mb}^{-1}$
and $b\notin\mathfrak{p}$. Note that 
\[
\Phi_{\chi,\mathfrak{bp}^{-1}}(x)=\begin{cases}
\chi_{I}(\mathfrak{p})^{-1}\Phi_{\chi,\mathfrak{b}}(x) & x\in\mathfrak{pb}^{-1}\\
0 & \mbox{otherwise}
\end{cases}
\]
for all $x\in K.$ Since $\Phi_{\chi,\gamma\mathfrak{b}}$ is invariant
under the translation by $pbu\hat{\mathbb{Z}}$, It is enough to prove
that
\[
\sum_{k=0}^{p-1}\Phi_{\chi,\gamma\mathfrak{b}}(w+kbu)=0.
\]
For all $k\in\mathbb{Z}$, we have
\[
\Phi_{\chi,\mathfrak{b}}(w+kbu)=\Phi_{\chi,\mathfrak{b}}(w).
\]
Since $bu\notin\mathfrak{pb}^{-1}$, there exists $l\in\{0,1,\dots,p-1\}$
such that for all $k\in\{0,\dots,p-1\}$ 
\[
p\chi_{I}(\mathfrak{p})\Phi_{\chi,\mathfrak{bp}}(w+kbu)=\begin{cases}
\Phi_{\chi,\mathfrak{b}}(w) & k=l\\
0 & \mbox{otherwise}.
\end{cases}
\]
Thus we have
\begin{align*}
\sum_{k=0}^{p-1}\Phi_{\chi,\gamma\mathfrak{b}}(w+kbu) & =\sum_{k=0}^{p-1}\Phi_{\chi,\mathfrak{b}}(w+kbu)-p\chi_{I}(\mathfrak{p})\sum_{k=0}^{p-1}\Phi_{\chi,\mathfrak{b}}(w+kbu)\\
 & =p\Phi_{\chi,\mathfrak{b}}(w)-p\Phi_{\chi,\mathfrak{b}}(w)\\
 & =0
\end{align*}
and the lemma is proved. \end{proof}
\begin{lem}
\label{lem:regularization-pre}Let $(u_{1},\dots,u_{n})\in O_{K}^{n}$
be a basis of $K$ over $\mathbb{Q}$. Let $(w_{1},\dots,w_{n})\in O_{K}^{n}$
be a basis of $K$ over $\mathbb{Q}$ such that $\mathrm{Tr}(u_{i}w_{j})=0$
for all $i\neq j$. We define a finite set $Z$ of rational primes
by
\[
Z=Q(\mathfrak{b})\cup Q(\mathfrak{m})\cup Q(\mathfrak{d})\cup\{p\in Q(u_{j})|1\leq j\leq n\}\cup\{p\in Q(w_{j})|1\leq j\leq n\}.
\]
Let $\mathfrak{p}$ and $\mathfrak{q}$ be prime ideals of degree
$1$ such that $N(\mathfrak{p})\notin Z$, $N(\mathfrak{q})\notin Z$
and $N(\mathfrak{p})\neq N(\mathfrak{q})$. Then the pair $(\Phi_{\chi,\gamma\mathfrak{b}},\Lambda(u_{1},\dots,u_{n}))$
satisfies Condition $P_{1}$ and $P_{2}$, where
\[
\gamma=(1-N(\mathfrak{p})\chi(\mathfrak{p})\mathfrak{p}^{-1})(1-\chi^{-1}(\mathfrak{q})\mathfrak{q})\in\mathbb{C}[I_{K}].
\]
\end{lem}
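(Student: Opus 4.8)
The plan is to reduce both Conditions $P_1$ and $P_2$ to the preceding lemma (which establishes Condition $P_2$ for the single Euler-factor regularization) by means of Lemma~\ref{Rem:p1p2} and Fourier inversion. Two elementary facts will be used throughout: Conditions $P_1$ and $P_2$ are $\mathbb{C}$-linear and homogeneous in $\Phi$, so they pass through finite linear combinations and through multiplication of $\Phi$ by a nonzero constant; and for a pair $(\Phi,\Lambda(v_1,\dots,v_n))$ neither condition changes if each $v_j$ is replaced by $c_jv_j$ with $c_j\in\mathbb{Q}^\times$ (substitute $x_j\mapsto c_jx_j$ in the defining identities). Also, since the trace form on $K$ is nondegenerate and $w_j$ is orthogonal to $u_i$ for every $i\neq j$, the hypothesis forces $w_j=c_jw_j'$ with $c_j=\mathrm{Tr}(u_jw_j)\in\mathbb{Q}^\times$, where $(w_1',\dots,w_n')$ is the trace-dual basis of $(u_1,\dots,u_n)$; hence $\varphi(\Lambda(u_1,\dots,u_n))=\Lambda(w_1',\dots,w_n')$, and by the rescaling remark $(\Phi,\varphi(\Lambda(u_1,\dots,u_n)))$ satisfies $P_i$ iff $(\Phi,\Lambda(w_1,\dots,w_n))$ does, for $i=1,2$. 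Finally, since $N(\mathfrak{p}),N(\mathfrak{q})\notin Q(\mathfrak{m})$, the ideals $\mathfrak{p},\mathfrak{q}$ are prime to the conductor, so $\chi(\mathfrak{p})=\chi_I(\mathfrak{p})$ and $\chi^{-1}(\mathfrak{q})=(\chi^{-1})_I(\mathfrak{q})$.

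For Condition $P_2$, I would put $\gamma_1=(1-N(\mathfrak{p})\chi(\mathfrak{p})\mathfrak{p}^{-1})\in\mathbb{C}[I_K]$, so that $\gamma\mathfrak{b}=\gamma_1\mathfrak{b}-\chi^{-1}(\mathfrak{q})\gamma_1\mathfrak{q}\mathfrak{b}$ and hence $\Phi_{\chi,\gamma\mathfrak{b}}=\Phi_{\chi,\gamma_1\mathfrak{b}}-\chi^{-1}(\mathfrak{q})\Phi_{\chi,\gamma_1\mathfrak{q}\mathfrak{b}}$. The preceding lemma applies to the fractional ideal $\mathfrak{b}$ because $N(\mathfrak{p})\notin Z\supseteq Q(\mathfrak{b})\cup Q(\mathfrak{m})\cup\bigcup_jQ(u_j)$, and to $\mathfrak{q}\mathfrak{b}$ because $Q(\mathfrak{q}\mathfrak{b})\subseteq\{N(\mathfrak{q})\}\cup Q(\mathfrak{b})$ and $N(\mathfrak{p})\neq N(\mathfrak{q})$. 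Thus $(\Phi_{\chi,\gamma_1\mathfrak{b}},\Lambda(u_1,\dots,u_n))$ and $(\Phi_{\chi,\gamma_1\mathfrak{q}\mathfrak{b}},\Lambda(u_1,\dots,u_n))$ both satisfy $P_2$, and by linearity so does $(\Phi_{\chi,\gamma\mathfrak{b}},\Lambda(u_1,\dots,u_n))$.

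For Condition $P_1$, applying Lemma~\ref{Rem:p1p2} to the pair $(\hat\Phi_{\chi,\gamma\mathfrak{b}},\varphi(\Lambda(u_1,\dots,u_n)))$, together with $\varphi\circ\varphi=\mathrm{id}$ on simple cones and Fourier inversion $\widehat{\widehat{f}}(x)=f(-x)$ (which does not affect $P_1$), it suffices to prove that $(\hat\Phi_{\chi,\gamma\mathfrak{b}},\Lambda(w_1,\dots,w_n))$ satisfies $P_2$. Using $\hat\Phi_{\chi,\delta}=k(\chi)\Phi_{\chi^{-1},\mathfrak{md}\hat\delta}$ and replacing, in the four-term expansion of $\gamma\mathfrak{b}$, each ideal $\mathfrak{a}$ by $N(\mathfrak{a})\mathfrak{a}^{-1}$, one computes
\[
\widehat{\gamma\mathfrak{b}}=N(\mathfrak{b})\,(1-\chi(\mathfrak{p})\mathfrak{p})\,(1-N(\mathfrak{q})\chi^{-1}(\mathfrak{q})\mathfrak{q}^{-1})\,\mathfrak{b}^{-1},
\]
so that $\hat\Phi_{\chi,\gamma\mathfrak{b}}=k(\chi)N(\mathfrak{b})\,\Phi_{\chi^{-1},\gamma'\mathfrak{md}\mathfrak{b}^{-1}}$ with $\gamma'=(1-\chi(\mathfrak{p})\mathfrak{p})(1-N(\mathfrak{q})\chi^{-1}(\mathfrak{q})\mathfrak{q}^{-1})$. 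This $\gamma'$ is exactly the element $\gamma$ of the present lemma after replacing $\chi$ by $\chi^{-1}$ and interchanging $\mathfrak{p}$ and $\mathfrak{q}$; the corresponding bad-prime set is again contained in $Z$ (because $Q(\mathfrak{md}\mathfrak{b}^{-1})\subseteq Q(\mathfrak{m})\cup Q(\mathfrak{d})\cup Q(\mathfrak{b})$, the conductor of $\chi^{-1}$ is $\mathfrak{m}$ and the different is $\mathfrak{d}$), the hypotheses $\mathrm{Tr}(u_iw_j)=0$ ($i\neq j$), $N(\mathfrak{p}),N(\mathfrak{q})\notin Z$ and $N(\mathfrak{p})\neq N(\mathfrak{q})$ are symmetric under the interchange $(u,\mathfrak{p})\leftrightarrow(w,\mathfrak{q})$, and $(w_1,\dots,w_n)\in O_K^n$ is a basis by hypothesis. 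Hence the Condition $P_2$ statement proved above, applied to this transformed data, gives that $(\Phi_{\chi^{-1},\gamma'\mathfrak{md}\mathfrak{b}^{-1}},\Lambda(w_1,\dots,w_n))$ satisfies $P_2$; multiplying by the scalar $k(\chi)N(\mathfrak{b})$ (when $k(\chi)=0$ the Fourier transform vanishes and $P_2$ is trivial) yields $P_2$ for $(\hat\Phi_{\chi,\gamma\mathfrak{b}},\Lambda(w_1,\dots,w_n))$, which finishes the proof.

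I expect the argument to be essentially bookkeeping rather than conceptually hard, but two points need care. First, one must carry out the Fourier-transform computation of $\widehat{\gamma\mathfrak{b}}$ and recognize that the outcome is once more a product of two Euler-factor corrections of the same shape, now with $\chi$ replaced by $\chi^{-1}$ and $\mathfrak{p},\mathfrak{q}$ interchanged (and with $\mathfrak{md}\mathfrak{b}^{-1}$ carried along in place of $\mathfrak{b}$) — this is the only place the precise choice of $\gamma$ enters. Second, each invocation of the preceding lemma requires checking that the norm of the prime being inverted misses the enlarged set $Z$, and it is exactly here that the hypotheses $N(\mathfrak{p})\notin Z$, $N(\mathfrak{q})\notin Z$, $N(\mathfrak{p})\neq N(\mathfrak{q})$, the inclusion $Q(\mathfrak{d})\subseteq Z$, and the symmetry of $\mathrm{Tr}(u_iw_j)=0$ in $(u,\mathfrak{p})\leftrightarrow(w,\mathfrak{q})$ are used.
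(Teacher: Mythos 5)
Your proof is correct and follows essentially the same route as the paper: the same factorization $\gamma=\gamma_1\hat{\gamma}_2$, the single-Euler-factor lemma applied twice (by linearity) for Condition $P_{2}$, and the Fourier transform $\hat{\Phi}_{\chi,\gamma\mathfrak{b}}=k(\chi)\Phi_{\chi^{-1},\gamma_{2}\mathfrak{m}\mathfrak{d}\hat{\mathfrak{a}}}$ combined with Lemma \ref{Rem:p1p2} for Condition $P_{1}$. You are merely more explicit than the paper about the bookkeeping it glosses over (the rational rescaling identifying $\varphi(\Lambda(u_{1},\dots,u_{n}))$ with $\Lambda(w_{1},\dots,w_{n})$, the reflection from Fourier inversion, and the verification that the relevant primes avoid the enlarged bad sets).
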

\begin{proof}
We set 
\[
\gamma_{1}=(1-N(\mathfrak{p})\chi(\mathfrak{p})\mathfrak{p}^{-1})\in\mathbb{C}[I_{K}],
\]
\[
\gamma_{2}=(1-N(\mathfrak{q})\chi^{-1}(\mathfrak{q})\mathfrak{q}^{-1})\in\mathbb{C}[I_{K}].
\]
Note that we have $\gamma=\gamma_{1}\hat{\gamma}_{2}$. By previous
lemma, $(\Phi_{\chi,\gamma\mathfrak{b}},\Lambda(u_{1},\dots,u_{n}))$
satisfies Condition $P_{2}$. Put $\mathfrak{a}=\gamma_{1}\mathfrak{b}$.
Note that the Fourier transform of $\Phi_{\chi,\gamma\mathfrak{b}}$
is given by $k(\chi)\Phi_{\chi^{-1},\gamma_{2}\mathfrak{md}\hat{\mathfrak{a}}}$.
By previous lemma, $(\Phi_{\chi^{-1},\gamma_{2}\mathfrak{md}\hat{\mathfrak{a}}},\Lambda(w_{1},\dots,w_{n}))$
satisfied Condition $P_{2}$. Therefore by Lemma \ref{Rem:p1p2},
$(\Phi_{\chi,\gamma\mathfrak{b}},\Lambda(u_{1},\dots,u_{n}))$ satisfy
Condition $P_{1}$. \end{proof}
\begin{defn}
We set 
\[
P(K)=\{(\mathfrak{p},\mathfrak{q})\mid\mathfrak{p},\mathfrak{q}\mbox{ are prime ideals of degree }1,\ N(\mathfrak{p})\neq N(\mathfrak{q})\}.
\]
We say that a statement $A(\mathfrak{p},\mathfrak{q})$ holds for
\emph{almost all} $(\mathfrak{p},\mathfrak{q})\in P(K)$ if there
exists a finite set $Z$ of rational primes such that $A(\mathfrak{p},\mathfrak{q})$
holds for
\[
\{(\mathfrak{p},\mathfrak{q})\mid N(\mathfrak{p})\notin Z,\ N(\mathfrak{q})\notin Z\}.
\]

\end{defn}
By Lemma \ref{lem:regularization-pre}, we have the following theorem.
\begin{thm}
\label{thm:regularization}Let $\chi$ be a Hecke character of $K$,
and let $z$ be an element of $\mathbb{C}[I_{K}]$. Let $T$ be a
finite set of simple cones. Then for almost all $(\mathfrak{p},\mathfrak{q})\in P(K)$,
\textup{$\Phi_{\chi,\gamma z}$ is regular with respect to $\Lambda$
for all $\Lambda\in T$, where} 
\[
\gamma=(1-N(\mathfrak{p})\chi(\mathfrak{p})\mathfrak{p}^{-1})(1-\chi^{-1}(\mathfrak{q})\mathfrak{q})\in\mathbb{C}[I_{K}].
\]

\end{thm}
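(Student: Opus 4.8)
The plan is to derive Theorem \ref{thm:regularization} from Lemma \ref{lem:regularization-pre} by a short chain of elementary reductions, exploiting that Conditions $P_1$ and $P_2$ are linear in $\Phi$ and invariant under rescaling the generators of a cone by rational scalars, and that the notion ``for almost all $(\mathfrak{p},\mathfrak{q})$'' is stable under finite conjunctions because a finite union of finite sets of rational primes is finite.

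First I would reduce to a single cone and a single fractional ideal. Since $T$ is finite, it suffices to produce, for each $\Lambda\in T$, a finite set of excluded primes, and then take the union over $\Lambda\in T$. Writing $z=\sum_j n_j\mathfrak{b}_j$ with $n_j\in\mathbb{C}$ and $\mathfrak{b}_j\in I_K$, we have $\Phi_{\chi,\gamma z}=\sum_j n_j\Phi_{\chi,\gamma\mathfrak{b}_j}$ by definition; both Condition $P_1$ (vanishing of $\Phi$ on a fixed subset) and Condition $P_2$ (vanishing of certain integrals of $\Phi$) are linear homogeneous conditions on $\Phi$, so if each $\Phi_{\chi,\gamma\mathfrak{b}_j}$ is regular with respect to $\Lambda$ then so is $\Phi_{\chi,\gamma z}$. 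Hence it is enough to prove the statement for $z=\mathfrak{b}$ a single fractional ideal and $\Lambda=\Lambda(u_1,\dots,u_n)$ a single simple cone.

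Next I would normalize the data so that Lemma \ref{lem:regularization-pre} applies verbatim. Since $\Lambda$ is simple, $u_1,\dots,u_n$ form a $\mathbb{Q}$-basis of $K$; replacing each $u_i$ by a positive-integer multiple affects neither Condition $P_1$ (immediate) nor Condition $P_2$ (the substitution $x\mapsto t_j^{-1}x$ merely multiplies the integral by a nonzero constant), so we may assume $u_1,\dots,u_n\in O_K$. Let $v_1,\dots,v_n\in\mathfrak{d}^{-1}$ be the trace-dual basis, $\mathrm{Tr}(u_iv_j)=\delta_{ij}$, and choose a nonzero rational integer $c\in\mathfrak{d}$; then $w_j:=cv_j\in O_K$ forms a $\mathbb{Q}$-basis with $\mathrm{Tr}(u_iw_j)=c\,\delta_{ij}=0$ for $i\neq j$. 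Feeding $(u_1,\dots,u_n)$ and $(w_1,\dots,w_n)$ into Lemma \ref{lem:regularization-pre} yields a finite set $Z$ of rational primes depending only on the fixed data $\mathfrak{b}$, $\mathfrak{m}$, $\mathfrak{d}$, and the sets $Q(u_j)$, $Q(w_j)$, such that for every $(\mathfrak{p},\mathfrak{q})\in P(K)$ with $N(\mathfrak{p}),N(\mathfrak{q})\notin Z$ the pair $(\Phi_{\chi,\gamma\mathfrak{b}},\Lambda(u_1,\dots,u_n))$ satisfies Conditions $P_1$ and $P_2$, i.e.\ $\Phi_{\chi,\gamma\mathfrak{b}}$ is regular with respect to $\Lambda$.

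Assembling the reductions, the union of these finitely many exceptional sets (over the finitely many $\Lambda\in T$ and the finitely many $\mathfrak{b}_j$ occurring in $z$) is a single finite set $Z$ of rational primes such that $\Phi_{\chi,\gamma z}$ is regular with respect to every $\Lambda\in T$ whenever $N(\mathfrak{p}),N(\mathfrak{q})\notin Z$, which is exactly the asserted ``almost all'' statement. The only steps that need genuine care are the two invariance claims for $P_1$ and $P_2$ (linearity in $\Phi$, and rescaling of cone generators) together with the standard fact that the trace-dual basis of an integral basis can be cleared of denominators inside $O_K$; all the real content is already contained in Lemma \ref{lem:regularization-pre} and the Cassou-Nogu\`es-type lemma preceding it, and the remainder is bookkeeping over finite sets.
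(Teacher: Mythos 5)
Your proposal is correct and follows essentially the same route as the paper, which simply deduces Theorem \ref{thm:regularization} from Lemma \ref{lem:regularization-pre}; you have merely made explicit the routine reductions (finiteness of $T$, linearity of Conditions $P_{1}$ and $P_{2}$ in $\Phi$, rescaling the generators into $O_{K}$, and clearing denominators in the trace-dual basis) that the paper leaves implicit. The only cosmetic slip is the parenthetical claim that the dual basis lies in $\mathfrak{d}^{-1}$, which need not hold for a non-integral basis, but your argument only uses that some integer multiple of each $v_{j}$ lies in $O_{K}$, which is always true.
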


\subsection{Fundamental domain and fundamental domain fan}

We use the following theorem by H. Yoshida, which can be found in
\cite{MR2011848} as Theorem 5.2 in chapter IV.
\begin{thm}
\label{Fact:exists-epsilon}There exists $\epsilon_{1},\dots,\epsilon_{n-1}\in E_{K}$
with the following properties.\end{thm}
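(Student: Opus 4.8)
The plan is to reduce this to Dirichlet's unit theorem together with an explicit triangulation of a fundamental parallelepiped, the curvature of that parallelepiped being the real difficulty. First I would pass from the multiplicative to the additive picture. The group $E_{K}$ acts on $(\mathbb{R}_{>0})^{n}$ through $\rho$ by multiplication, and the coordinatewise logarithm $\tilde{\ell}:(\mathbb{R}_{>0})^{n}\to\mathbb{R}^{n}$, $\tilde{\ell}(t)=(\log t_{1},\dots,\log t_{n})$, conjugates this action to translation by $L_{K}=\{(\log\rho_{\mu}(\epsilon))_{\mu=1}^{n}\mid\epsilon\in E_{K}\}$. Since the totally positive units form a finite-index subgroup of $O_{K}^{\times}$, Dirichlet's unit theorem gives that $E_{K}$ is free of rank $n-1$ and that $L_{K}$ is a full lattice in the trace-zero hyperplane $H=\{x\in\mathbb{R}^{n}\mid\sum_{\mu}x_{\mu}=0\}$. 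Writing $\mathbb{R}^{n}=H\oplus\mathbb{R}(1,\dots,1)$, the set $\tilde{\ell}^{-1}\bigl(\mathbb{R}(1,\dots,1)+P\bigr)$ is a fundamental domain for $(\mathbb{R}_{>0})^{n}/E_{K}$ for any fundamental domain $P$ of $H/L_{K}$.

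Next I would realize such a fundamental domain from a triangulated parallelepiped. Choose a $\mathbb{Z}$-basis $\epsilon_{1},\dots,\epsilon_{n-1}$ of $E_{K}$, put $v_{i}=(\log\rho_{\mu}(\epsilon_{i}))_{\mu}\in L_{K}$, and take $P$ to be the half-open parallelepiped on $v_{1},\dots,v_{n-1}$; the closed parallelepiped has the path (staircase) triangulation into the $(n-1)!$ simplices $\Delta_{\sigma}$, $\sigma\in\mathfrak{S}_{n-1}$, with vertices $0,\,v_{\sigma(1)},\,v_{\sigma(1)}+v_{\sigma(2)},\,\dots,\,v_{\sigma(1)}+\cdots+v_{\sigma(n-1)}$. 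Fix a totally positive $a\in K^{\times}$. The extreme rays of $\tilde{\ell}^{-1}$ of the prism over $\Delta_{\sigma}$ translated by $\tilde{\ell}(\rho(a))$ are $\mathbb{R}_{>0}\rho(a),\ \mathbb{R}_{>0}\rho(\epsilon_{\sigma(1)}a),\ \dots,\ \mathbb{R}_{>0}\rho(\epsilon_{\sigma(1)}\cdots\epsilon_{\sigma(n-1)}a)$, so the straight simplicial cone on them is $C(a,\epsilon_{\sigma(1)}a,\dots,\epsilon_{\sigma(1)}\cdots\epsilon_{\sigma(n-1)}a)$, and the signed sum over $\sigma$ of the corresponding $\Lambda$'s is precisely the cubic fundamental domain fan $F(a;\epsilon_{1},\dots,\epsilon_{n-1})$ defined above. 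Because all the $\epsilon_{\sigma(1)}\cdots\epsilon_{\sigma(k)}a$ are totally positive, one checks that $r(a,\epsilon_{\sigma(1)}a,\dots,\epsilon_{\sigma(1)}\cdots\epsilon_{\sigma(n-1)}a)$ equals $\mathrm{sgn}(\sigma)$ times a constant independent of $\sigma$, so that $\mathfrak{C}(F(a;\epsilon_{1},\dots,\epsilon_{n-1}))=\pm\sum_{\sigma\in\mathfrak{S}_{n-1}}\bm{1}_{C(a,\epsilon_{\sigma(1)}a,\dots,\epsilon_{\sigma(1)}\cdots\epsilon_{\sigma(n-1)}a)}$.

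The main obstacle is that $\tilde{\ell}^{-1}$ of the flat simplex $\Delta_{\sigma}$ is a curved region, whereas $C(a,\epsilon_{\sigma(1)}a,\dots)$ is the straight cone on the same extreme rays; straightening can create overlaps and gaps across the shared faces, so for a bad basis the cones $C(a,\epsilon_{\sigma(1)}a,\dots)$ need neither be pairwise disjoint nor cover a fundamental domain. The substance of Yoshida's theorem is exactly that the basis $\epsilon_{1},\dots,\epsilon_{n-1}$ of $E_{K}$ can be chosen so that the partial-product vectors $v_{\sigma(1)},\,v_{\sigma(1)}+v_{\sigma(2)},\,\dots$ lie in convex position of the right kind, forcing the open cones $C(a,\epsilon_{\sigma(1)}a,\dots)$, $\sigma\in\mathfrak{S}_{n-1}$, to be pairwise disjoint with union a genuine fundamental domain $D$ for $(\mathbb{R}_{>0})^{n}/E_{K}$ (their shared boundary faces being a thin set). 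This existence of a good basis is the part I would \emph{not} reprove but cite to \cite{MR2011848}, where it is obtained by an induction on the unit rank, reducing modulo a rank-one sublattice while keeping control of the sign conditions $\rho(\epsilon_{\sigma(1)}\cdots\epsilon_{\sigma(k)}a)\in(\mathbb{R}_{>0})^{n}$. Granting it, $\mathfrak{C}(F(a;\epsilon_{1},\dots,\epsilon_{n-1}))=\pm\bm{1}_{D}$; combined with Proposition \ref{prop:another_cubic_fandamental_domain} (so that the choice of $a$ is immaterial) and with Lemma \ref{lem:W-Ker-shrink} (so that the thin boundary loci are negligible modulo $W(V)+\ker\partial_{n}$), this gives the asserted properties of $\epsilon_{1},\dots,\epsilon_{n-1}$ via the definition of a fundamental domain fan from Section \ref{sub:Dual-fan-of-fundamental-domain-fan}.
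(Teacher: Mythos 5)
The paper gives no proof of this statement: it is quoted verbatim from Yoshida's book \cite{MR2011848}, and your proposal likewise defers the only substantive point (the existence of a system of units making the straightened cones pairwise disjoint with union a fundamental domain, up to a thin set) to that same reference, so the two treatments coincide and your surrounding reduction to Dirichlet's unit theorem and the staircase triangulation is a correct account of what the cited result provides. One correction of substance: the theorem only asserts that $\epsilon_{1},\dots,\epsilon_{n-1}$ generate a subgroup $E$ of \emph{finite index} in $E_{K}$ and that $D$ is a fundamental domain for the action of $E$ (so $\mathbb{R}_{>0}^{n}=\coprod_{\epsilon\in E}\epsilon D$), not for $E_{K}$ itself; this weakening is needed in general, so you should not begin by promising a $\mathbb{Z}$-basis of $E_{K}$.
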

\begin{enumerate}
\item $\epsilon_{1},\dots,\epsilon_{n-1}$ generate a subgroup $E$ of $E_{K}$
of finite index.
\item There exists a thin set $Z\subset\rho(K)\cap\mathbb{R}_{>0}^{n}$
and a fundamental domain $D\subset\mathbb{R}_{>0}^{n}$ such that
\[
\mathbb{R}_{>0}^{n}=\coprod_{\epsilon\in E}\epsilon D
\]
and
\[
\mathfrak{C}(F(1;\epsilon_{1},\dots,\epsilon_{n-1}))+\bm{1}_{Z}=\bm{1}_{D}.
\]

\end{enumerate}
We define the notion of a fundamental domain fan.
\begin{defn}
Let $\epsilon_{1},\dots,\epsilon_{n-1}$ and $E$ be as in Theorem
\ref{Fact:exists-epsilon}. Let $(\epsilon_{1}',\dots,\epsilon_{n-1}')$
be a generator of $E_{K}$. Then $\epsilon_{i}$ can be represented
as
\[
\epsilon_{i}=\prod_{j=1}^{n-1}(\epsilon_{j}')^{c_{ij}},
\]
where $C=(c_{ij})_{i,j}\in M_{n-1}(\mathbb{Z})$. We may assume that
$\det(C)=\#(E_{K}/E)$ by changing $\epsilon_{1}$ to $\epsilon_{1}^{-1}$
if necessary. We say that $\mathbb{D}$ is a \emph{fundamental domain
fan} if $\mathbb{D}$ is a fundamental domain fan for $(\epsilon_{1}',\dots,\epsilon_{n-1}')$.\end{defn}
\begin{rem}
For any basis $(\epsilon_{1}',\dots,\epsilon_{n-1}')$ of $E_{K}$,
$F(1;\epsilon_{1}',\dots,\epsilon_{n-1}')$ or $-F(1;\epsilon_{1}',\dots,\epsilon_{n-1}')$
is a fundamental domain fan. The above definition is to determine
this sign.
\end{rem}
For $g\in\{\pm1\}^{n}$, set $K_{g}=\{x\in K^{\times}\mid g_{\mu}\rho_{\mu}(x)>0\mbox{ for all }1\leq\mu\leq n\}$.
The following lemma follows from Theorem \ref{Fact:exists-epsilon}.
\begin{lem}
\label{lem:domain-sum}Let $g\in\{\pm1\}^{n}$ and $w\in K_{g}$.
Let $\epsilon_{1},\dots,\epsilon_{n-1}$ and $E$ be as in Theorem
\ref{Fact:exists-epsilon}. Then there exists a thin set $Z\subset K$
with the following property:

Let $f:K_{g}\to\mathbb{C}$ be any $E_{K}$-invariant function such
that 
\[
\sum_{x\in K_{g}/E_{K}}f(x)
\]
converges absolutely and $f(v)=0$ for all $v\in K_{g}\cap Z$. Then
we have
\[
\sum_{x\in K_{g}/E}f(x)=(\prod_{\mu}g_{\mu})\sum_{x\in K}f(x)\mathfrak{C}(F(w;\epsilon_{1},\dots,\epsilon_{n-1}))(x).
\]

\end{lem}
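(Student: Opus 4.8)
The plan is to reduce to the case $g=(1,\dots,1)$ and then to compare $F(w;\epsilon_{1},\dots,\epsilon_{n-1})$ with $F(1;\epsilon_{1},\dots,\epsilon_{n-1})$, identifying the latter with an honest fundamental domain via Theorem \ref{Fact:exists-epsilon}. Throughout write $K_{+}=K_{(1,\dots,1)}$ for the set of totally positive elements of $K^{\times}$, on which $E$ and $E_{K}$ act.

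For the reduction, since $K$ is dense in $K(\mathbb{R})=\mathbb{R}^{n}$ one may choose $a\in K^{\times}$ with $\mathrm{sgn}(\rho_{\mu}(a))=g_{\mu}$ for all $\mu$; then $K_{g}=aK_{+}$ and $w':=a^{-1}w\in K_{+}$. Multiplication by $a^{-1}$ carries $F(w;\epsilon_{1},\dots,\epsilon_{n-1})$ to $F(w';\epsilon_{1},\dots,\epsilon_{n-1})$ (it commutes with each $\epsilon_{i}$), and since multiplication by $a$ is a $\mathbb{Q}$-linear self-map of $K$ whose determinant $N(a)$ has sign $\prod_{\mu}g_{\mu}$,
\[
{\textstyle\prod_{\mu}g_{\mu}}\;\mathfrak{C}(F(w;\epsilon_{1},\dots,\epsilon_{n-1}))(x)=\mathfrak{C}(F(w';\epsilon_{1},\dots,\epsilon_{n-1}))(a^{-1}x).
\]
Putting $f'(x)=f(ax)$ — still $E_{K}$-invariant, with $\sum_{x\in K_{+}/E_{K}}|f'(x)|=\sum_{y\in K_{g}/E_{K}}|f(y)|<\infty$ — and substituting $x\mapsto ax$ turns the asserted identity for $(g,w,f)$ into the same identity for $((1,\dots,1),w',f')$; the thin set transports by $a$. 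Hence we may assume $g=(1,\dots,1)$, i.e.\ $K_{g}=K_{+}$.

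Now assume $g=(1,\dots,1)$. Identifying $K$ with $\rho(K)\subset\mathbb{R}^{n}$, Theorem \ref{Fact:exists-epsilon} furnishes a thin set $Z_{0}\subset K$ and a fundamental domain $D$ for $\mathbb{R}_{>0}^{n}/E$ with $\mathfrak{C}(F(1;\epsilon_{1},\dots,\epsilon_{n-1}))=\bm{1}_{\rho^{-1}(D)}-\bm{1}_{Z_{0}}$; all cones here have totally positive vertices (products of the totally positive units $\epsilon_{i}$), so both sides are supported in $K_{+}$. Since $D$ is a fundamental domain, $\rho^{-1}(D)\cap K_{+}$ meets each $E$-orbit in $K_{+}$ exactly once, and $\sum_{x\in K_{+}/E}|f(x)|=[E_{K}:E]\sum_{x\in K_{+}/E_{K}}|f(x)|<\infty$, so
\[
\sum_{x\in K}f(x)\,\mathfrak{C}(F(1;\epsilon_{1},\dots,\epsilon_{n-1}))(x)=\sum_{x\in K_{+}/E}f(x)-\sum_{x\in K\cap Z_{0}}f(x),
\]
which is $\sum_{x\in K_{+}/E}f(x)$ once $f$ vanishes on $Z_{0}$.

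It remains to show that the pairing of $f$ with $\mathfrak{C}(F(w;\epsilon_{1},\dots,\epsilon_{n-1}))-\mathfrak{C}(F(1;\epsilon_{1},\dots,\epsilon_{n-1}))$ vanishes for $f$ vanishing on a further thin set. Here I would use the explicit construction underlying Proposition \ref{prop:another_cubic_fandamental_domain} (the conical system $\mathcal{H}$ built in the proof of Lemma \ref{lem:equivalence-of-T}) to write $F(w;\epsilon_{1},\dots,\epsilon_{n-1})-F(1;\epsilon_{1},\dots,\epsilon_{n-1})=u+\partial_{n+1}\ell$ with $u\in I_{n}(K^{\times},E_{K})$ a $\mathbb{Z}$-combination of terms $(\epsilon_{i}-1)z$ $(z\in C_{n}(K^{\times}))$, with $\ell\in C_{n+1}(K^{\times})$, and with every cone appearing in $u$ and $\ell$ having all vertices totally positive (each such vertex is a product of the $\epsilon_{i}$ with $1$ or $w$). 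Choosing in Section \ref{sub:Indicator-function-of-fan} the total order on $K$ to be the pull-back by $\rho$ of the lexicographic order on $\mathbb{R}^{n}$, totally positive vectors are positive, so $P$ acts trivially on every cone in $\ell$ and in $\partial_{n+1}\ell$; since $\partial_{n+1}\ell\in\ker\partial_{n}\subseteq W(K)+\ker\partial_{n}$, Lemma \ref{lem:W-Ker-shrink} shows $\mathfrak{C}(\partial_{n+1}\ell)=\mathfrak{C}(P(\partial_{n+1}\ell))$ has thin support $Z_{1}$, so it contributes $0$ once $f$ vanishes on $Z_{1}$. For the other part, $\mathfrak{C}((\epsilon_{i}-1)z)(x)=\mathfrak{C}(z)(\epsilon_{i}^{-1}x)-\mathfrak{C}(z)(x)$, and the substitution $x\mapsto\epsilon_{i}x$ together with the $E_{K}$-invariance of $f$ gives $\sum_{x}f(x)\mathfrak{C}((\epsilon_{i}-1)z)(x)=0$; taking $Z=Z_{0}\cup Z_{1}$ (and its $a$-translate in the general case) finishes the argument. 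The step requiring genuine care — and, I expect, the main obstacle — is the absolute convergence that legitimizes these rearrangements and in fact makes the right-hand side of the lemma meaningful at all: the supports in play are finite unions of simplicial cones with totally positive vertices, and such a cone has compact cross-section in logarithmic coordinates, hence is covered by finitely many $E$-translates of $D$; consequently it meets each $E_{K}$-orbit in a number of points bounded independently of the orbit, and with $\sum_{x\in K_{+}/E_{K}}|f(x)|<\infty$ this yields absolute convergence of all the sums involved. (A secondary point is to read off from the proof of Lemma \ref{lem:equivalence-of-T} that the decomposition $u+\partial_{n+1}\ell$ may indeed be taken with all vertices totally positive, which is immediate once one records that every cone occurring there is assembled from the totally positive units $\epsilon_{i}$ together with $1$ and $w$.)
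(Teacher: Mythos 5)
Your argument is correct; note that the paper gives no proof of this lemma at all beyond the remark that it ``follows from Theorem \ref{Fact:exists-epsilon}'', so the comparison is with an implied argument rather than a written one. That implied argument is the degenerate case of your own reduction: since the thin set $Z$ is allowed to depend on $w$, you may take $a=w$ itself in your first step, so that $w'=1$; the sign bookkeeping you carry out, namely $r(w^{-1}u_{1},\dots,w^{-1}u_{n})=\mathrm{sgn}(N(w))\,r(u_{1},\dots,u_{n})=(\prod_{\mu}g_{\mu})\,r(u_{1},\dots,u_{n})$, then turns the claimed identity directly into the statement $\mathfrak{C}(F(1;\epsilon_{1},\dots,\epsilon_{n-1}))=\bm{1}_{D}-\bm{1}_{Z}$ of Theorem \ref{Fact:exists-epsilon} paired with $f'(x)=f(wx)$, and the proof ends there with $Z$ replaced by its $w$-translate. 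With that choice your entire third paragraph --- the decomposition $F(w';\dots)-F(1;\dots)=u+\partial_{n+1}\ell$ extracted from the proof of Lemma \ref{lem:equivalence-of-T}, the lexicographic order making $P$ act trivially on totally positive cones, and the appeal to Lemma \ref{lem:W-Ker-shrink} --- is superfluous; it is nevertheless correct as written, and it amounts to a proof that the conclusion does not depend on the choice of $w\in K_{g}$, which is the form in which the lemma is actually invoked in Proposition \ref{prop:const_of_L_of_general_char}. The one point you raise that genuinely must be said, and which the paper suppresses entirely, is the absolute convergence making the right-hand side meaningful: your argument (a simplicial cone with totally positive generators has compact projectivization inside the open orthant, on which $E$ acts properly discontinuously and cocompactly, so the cone lies in finitely many $E$-translates of $D$ and hence meets each $E_{K}$-orbit in a uniformly bounded number of points) is the standard one and is exactly what is needed.
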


\subsection{Expression of L-function of general character of totally real field}

Let $(\rho_{\mu})_{\mu=1}^{n}$ be the tuple of all the embeddings
of $K$ to $\mathbb{R}$. For $\Phi\in\mathcal{S}(\mathbb{A}_{f}(K))$
and $\sigma:\{1,\dots,n\}\to\{0,1\}$, we define a function $\lambda:K^{\times}\to\mathbb{C}$
by
\[
\lambda(x)=\Phi(x)\prod_{\mu}\left(\frac{\rho_{\mu}(x)}{|\rho_{\mu}(x)|}\right)^{\sigma(\mu)}\ \ \ \ \ \ (x\in K^{\times})
\]
and a subset $X(\lambda)\subset\mathbb{C}^{n}$ by
\[
X(\lambda)=\{\bm{s}\in\mathbb{C}^{n}\mid\lambda(x\epsilon)=\lambda(x)\prod_{\mu=1}^{n}\left|\rho_{\mu}(\epsilon)\right|^{s_{\mu}}\mbox{ for any }(x,\epsilon)\in K^{\times}\times E_{K}\}.
\]
For $\bm{s}\in X(\lambda)$, we put
\[
H(\bm{s},\lambda)=\sum_{x\in K^{\times}/E_{K}}\frac{\lambda(x)}{\prod_{\mu=1}^{n}|\rho_{\mu}(x)|^{s_{\mu}}}.
\]

\begin{lem}
\label{lem:__wik_vanish}Let $\mathbb{B}\in W(K)+I_{n}(K,E_{K})+\ker\partial_{n}$.
There exists a finite set T with the following property:

Let $\Phi\in\mathcal{S}(K(\mathbb{A}_{f}))$ be any function which
is regular with respect to all $\Lambda\in T$. Then
\[
L_{g}(\bm{s},\Phi,\mathbb{B})=0
\]
 for all $g\in\{\pm1\}^{n}$ and $\bm{s}\in X(\lambda)$.\end{lem}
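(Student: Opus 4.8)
Fix a decomposition $\mathbb{B}=\mathbb{B}_{W}+\mathbb{B}_{I}+\mathbb{B}_{\partial}$, where $\mathbb{B}_{W}\in W(K)$ is a $\mathbb{Z}$-combination of non-simple cones, $\mathbb{B}_{I}=\sum_{k}n_{k}\bigl(\Lambda(\epsilon_{k}a^{(k)}_{1},\dots,\epsilon_{k}a^{(k)}_{n})-\Lambda(a^{(k)}_{1},\dots,a^{(k)}_{n})\bigr)$ with $\epsilon_{k}\in E_{K}$, and, using $\ker\partial_{n}=\partial_{n+1}C_{n+1}(K)$, $\mathbb{B}_{\partial}=\sum_{l}m_{l}\,\partial_{n+1}\Lambda(a^{(l)}_{0},\dots,a^{(l)}_{n})$. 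I would take $T$ to be the finite set of all simple cones occurring in $\mathbb{B}$, all $\Lambda(a^{(k)}_{1},\dots,a^{(k)}_{n})$ and $\Lambda(\epsilon_{k}a^{(k)}_{1},\dots,\epsilon_{k}a^{(k)}_{n})$, and all facets $\Lambda(a^{(l)}_{0},\dots,\widehat{a^{(l)}_{i}},\dots,a^{(l)}_{n})$. Since $\Phi\mapsto\Phi\mathfrak{C}(-)$ and $F$ are $\mathbb{Z}$-linear, regularity with respect to every $\Lambda\in T$ makes $L_{g}(\bm{s},\Phi,\mathbb{B}_{W})$, $L_{g}(\bm{s},\Phi,\mathbb{B}_{I})$ and $L_{g}(\bm{s},\Phi,\mathbb{B}_{\partial})$ all well defined, and it suffices to prove each is $0$. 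The $W$-term is immediate, because $\mathfrak{C}$ vanishes on non-simple cones, so $\mathfrak{C}(\mathbb{B}_{W})=0$.

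For $\mathbb{B}_{I}$ it is enough to kill one generator $\phi_{\epsilon}\Lambda'-\Lambda'$ with $\Lambda'=\Lambda(a_{1},\dots,a_{n})$ and $\epsilon\in E_{K}$; if $a_{1},\dots,a_{n}$ are dependent both characteristic functions are $0$, so assume $\Lambda'$ simple. As $\epsilon$ is totally positive, multiplication by $\epsilon$ has positive determinant and preserves the orientation, so $\mathfrak{C}(\phi_{\epsilon}\Lambda')(v)=\mathfrak{C}(\Lambda')(\epsilon^{-1}v)$, whence $F(\Phi\mathfrak{C}(\phi_{\epsilon}\Lambda'))(t)=F((m_{\epsilon}^{*}\Phi)\mathfrak{C}(\Lambda'))(\epsilon t)$ with $(m_{\epsilon}^{*}\Phi)(w)=\Phi(\epsilon w)$. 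Plugging this into the integral defining $L(g,\bm{s},-,\rho)$, using that $(\rho^{*})^{-1}$ intertwines multiplication by $\epsilon$ with the positive diagonal scaling $t_{\mu}\mapsto\rho_{\mu}(\epsilon)t_{\mu}$, and rescaling the variables $t_{\mu}$ yields
\[
L_{g}(\bm{s},\Phi,\phi_{\epsilon}\Lambda')=\Bigl(\textstyle\prod_{\mu}\rho_{\mu}(\epsilon)^{-s_{\mu}}\Bigr)\,L\bigl(g,\bm{s},(m_{\epsilon}^{*}\Phi)\mathfrak{C}(\Lambda'),\rho\bigr)\qquad(\bm{s}\in\mathbb{C}^{n}).
\]
On the other hand the defining relation of $X(\lambda)$, together with $\rho_{\mu}(\epsilon)>0$ (so the $\sigma$-factors cancel), forces $\Phi(\epsilon w)=\Phi(w)\prod_{\mu}\rho_{\mu}(\epsilon)^{s_{\mu}}$ for all $w\in K^{\times}$ whenever $\bm{s}\in X(\lambda)$; hence $(m_{\epsilon}^{*}\Phi)\mathfrak{C}(\Lambda')=\bigl(\prod_{\mu}\rho_{\mu}(\epsilon)^{s_{\mu}}\bigr)\Phi\mathfrak{C}(\Lambda')$ as elements of $\mathscr{U}(K)$ for such $\bm{s}$. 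Substituting back, the two powers of $\prod_{\mu}\rho_{\mu}(\epsilon)$ cancel and $L_{g}(\bm{s},\Phi,\phi_{\epsilon}\Lambda'-\Lambda')=0$ for $\bm{s}\in X(\lambda)$.

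The term $\mathbb{B}_{\partial}$ is the crux; here it is enough to show $F\bigl(\Phi\mathfrak{C}(\partial_{n+1}\Lambda(a_{0},\dots,a_{n}))\bigr)=0$ whenever $\Phi$ is regular with respect to every simple facet $\Lambda(a_{0},\dots,\widehat{a_{i}},\dots,a_{n})$. Since Conditions $P_{1}$ and $P_{2}$ are unchanged when a generator $u$ is replaced by $P(u)=\pm u$, Lemma~\ref{lem:sign_change_Lfunc}, applied facet by facet, lets me replace each $a_{i}$ by the positive vector $b_{i}=P(a_{i})$ (linear dependence is preserved, so non-simple facets stay non-simple and contribute nothing); thus it suffices to prove $\Phi\mathfrak{C}(\partial_{n+1}\Lambda(b_{0},\dots,b_{n}))=0$ outright. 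Now $\partial_{n+1}\Lambda(b_{0},\dots,b_{n})\in\ker\partial_{n}$, so Lemma~\ref{lem:W-Ker-shrink} — and, crucially, the explicit form of the thin support exhibited in its proof (reduction to $\mathfrak{C}(\mathbb{B}_{k})$ on a smaller space via Proposition~2 of \cite{MR2392823}) — shows that $\mathfrak{C}(\partial_{n+1}\Lambda(b_{0},\dots,b_{n}))$ is supported in a finite union of coordinate hyperplanes of the simple facet cones $\Lambda(b_{0},\dots,\widehat{b_{i}},\dots,b_{n})$. On each such hyperplane $\Phi$ vanishes by Condition $P_{1}$, so $\Phi\mathfrak{C}(\partial_{n+1}\Lambda(b_{0},\dots,b_{n}))$ is identically $0$; hence $F(\Phi\mathfrak{C}(\mathbb{B}_{\partial}))=0$ and $L_{g}(\bm{s},\Phi,\mathbb{B}_{\partial})=0$.

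The step I expect to be the genuine obstacle is this last one. It is not enough that $\mathfrak{C}$ of the boundary of a simplicial cone has thin support: one needs its support to lie in exactly those facet-coordinate hyperplanes on which regularity forces $\Phi$ to vanish. This is also why the passage from $a_{i}$ to $P(a_{i})$ via Lemma~\ref{lem:sign_change_Lfunc} is indispensable — for mixed signs the support of $\mathfrak{C}(\partial_{n+1}\Lambda(a_{0},\dots,a_{n}))$ need not even be thin — and it is the point where one must follow the combinatorics inside the proof of Lemma~\ref{lem:W-Ker-shrink} rather than merely quote its statement.
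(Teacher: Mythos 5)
Your decomposition and your treatment of the $W(K)$ and $I_{n}(K,E_{K})$ pieces coincide with what the paper does (the paper merges $W(K)$ and $\ker\partial_{n}$ into a single term $\mathbb{B}_{1}$, and handles $I_{n}(K,E_{K})$ exactly as you do, via the defining relation of $X(\lambda)$ together with the total positivity of the units). The genuine gap is in the step you yourself single out as the crux: the claim that $\mathfrak{C}(\partial_{n+1}\Lambda(b_{0},\dots,b_{n}))$ is supported in the union of the coordinate hyperplanes of the \emph{simple facets}, so that Condition $P_{1}$ for those facets alone kills $\Phi$ there. Lemma \ref{lem:W-Ker-shrink} asserts only that the support is thin, and its proof does not locate the thin set: it reduces to a lower-dimensional fan $\mathbb{B}_{k}$ and then quotes Proposition 2 of \cite{MR2392823} for the thinness of $\mathfrak{C}(\mathbb{B}_{k})$, so the proper subspaces produced there are never exhibited as spans of subsets of $\{b_{0},\dots,b_{n}\}$, let alone as coordinate hyperplanes of facets that happen to be simple. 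As written, your argument rests on an unproved (and in degenerate configurations delicate) combinatorial statement, and your $T$, which contains only cones built from the data of $\mathbb{B}$, is not demonstrably large enough.

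The fix is not to strengthen the support claim but to exploit the freedom the lemma gives you in choosing $T$: nothing requires the cones of $T$ to occur in $\mathbb{B}$. Since $\mathfrak{C}(P(\mathbb{B}_{W}+\mathbb{B}_{\partial}))$ has thin support, cover that support by finitely many hyperplanes $H_{1},\dots,H_{k}\subset K$; for each $i$ pick a basis $v_{1}^{(i)},\dots,v_{n-1}^{(i)}$ of $H_{i}$, complete it to a basis $v_{1}^{(i)},\dots,v_{n}^{(i)}$ of $K$, and add the auxiliary cone $\Lambda(v_{1}^{(i)},\dots,v_{n}^{(i)})$ to $T$. Condition $P_{1}$ for this cone forces $\Phi$ to vanish on the $\mathbb{A}_{f}$-span of $v_{1}^{(i)},\dots,v_{n-1}^{(i)}$, hence on $H_{i}$, so $\Phi\,\mathfrak{C}(P(\mathbb{B}_{W}+\mathbb{B}_{\partial}))=0$ identically and the corresponding $L_{g}$ vanishes, with no control needed on where the thin set sits relative to the facets. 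This is exactly what the paper's terse ``we may assume $L_{g}(\bm{s},\Phi,\mathbb{B}_{1})=L_{g}(\bm{s},\Phi,P(\mathbb{B}_{1}))=0$ by choosing a suitable $T$'' is hiding; your preliminary passage to the positive representatives $P(b_{i})$ via Lemma \ref{lem:sign_change_Lfunc} is still needed, as in the paper, so that Lemma \ref{lem:W-Ker-shrink} applies.
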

\begin{proof}
Let $\mathbb{B}=\mathbb{B}_{1}+\mathbb{B}_{2}$ where $\mathbb{B}_{1}\in W(K)+\ker\partial_{n}$
and $\mathbb{B}_{2}\in I_{n}(K,E_{K})$. We may assume that
\[
L_{g}(\bm{s},\Phi,\mathbb{B})=L_{g}(\bm{s},\Phi,\mathbb{B}_{1})+L_{g}(\bm{s},\Phi,\mathbb{B}_{2})
\]
by choosing a suitable $T$. By Lemma \ref{lem:sign_change_Lfunc}
and \ref{lem:W-Ker-shrink}, we may assume that
\[
L_{g}(\bm{s},\Phi,\mathbb{B}_{1})=L_{g}(\bm{s},\Phi,P(\mathbb{B}_{1}))=0.
\]
Let $\mathbb{B}_{2}=\sum_{j\in J}(\epsilon_{j}\mathbb{B}_{j}-\mathbb{B}_{j})$
where $\epsilon_{j}\in E_{K}$ and $\mathbb{B}_{j}\in C_{n}(K)$.
Since $\bm{s}\in X(\lambda)$, we may assume that
\[
L_{g}(\bm{s},\Phi,\epsilon_{j}\mathbb{B}_{j})=L_{g}(\bm{s},\Phi,\mathbb{B}_{j})
\]
for all $j\in J$. Thus the lemma is proved.\end{proof}
\begin{prop}
\label{prop:const_of_L_of_general_char}Let $\mathbb{D}$ be a fundamental
domain fan. Then there exists a finite set $T$ with the following
property:

Let $\Phi\in\mathcal{S}(\mathbb{A}_{f}(K))$ be any function which
is regular with respect to all $\Lambda\in T$. Then 
\[
H(\bm{s},\lambda)=L_{\sigma}(\bm{s},\Phi,\mathbb{D})
\]
 for all $\sigma$ and $\bm{s}\in X(\lambda)$.\end{prop}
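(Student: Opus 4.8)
The plan is to split $K^{\times}$ into its $2^{n}$ sign orthants $K_{g}$ ($g\in\{\pm1\}^{n}$) and to match the $K_{g}$-part of the series $H(\bm{s},\lambda)$ with the term $L(g,\bm{s},\Phi,\mathbb{D},\rho)$ occurring in $L_{\sigma}$. It is enough to prove the identity on the region where $H(\bm{s},\lambda)$ converges absolutely, since $L_{\sigma}(\bm{s},\Phi,\mathbb{D})$ is entire. First I would note that for $\bm{s}\in X(\lambda)$ the function $f(x):=\lambda(x)\prod_{\mu}|\rho_{\mu}(x)|^{-s_{\mu}}$ on $K^{\times}$ is $E_{K}$-invariant, so $H(\bm{s},\lambda)=\sum_{g}\sum_{x\in K_{g}/E_{K}}f(x)$; and on $K_{g}$ one has $\rho_{\mu}(x)/|\rho_{\mu}(x)|=g_{\mu}$ and $|\rho_{\mu}(x)|=g_{\mu}\rho_{\mu}(x)$, hence $f(x)=\bigl(\prod_{\mu}g_{\mu}^{\sigma(\mu)}\bigr)\Phi(x)\prod_{\mu}(g_{\mu}\rho_{\mu}(x))^{-s_{\mu}}$, exactly the summand in Lemma \ref{lem:series-expression}.

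Next I would bring in the geometry of units. Let $\epsilon_{1},\dots,\epsilon_{n-1}$, the subgroup $E$, the basis $(\epsilon_{i}')$ of $E_{K}$, and $C\in M_{n-1}(\mathbb{Z})$ be as in Theorem \ref{Fact:exists-epsilon} and the definition of a fundamental domain fan, so $\det(C)=\#(E_{K}/E)$. For each $g$ pick $w_{g}\in K_{g}$ (possible as $\rho(K)$ is dense in $\mathbb{R}^{n}$). Since the $\epsilon_{i}$ are totally positive and $w_{g}\in K_{g}$, every cone of $F(w_{g};\epsilon_{1},\dots,\epsilon_{n-1})$ lies in the $g$-orthant, so $\mathfrak{C}(F(w_{g};\epsilon_{1},\dots,\epsilon_{n-1}))$ is supported in $\rho^{-1}(\mathbb{R}_{g})$. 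Lemma \ref{lem:domain-sum} then rewrites $\sum_{x\in K_{g}/E}f(x)$ as $(\prod_{\mu}g_{\mu})\sum_{x\in K}f(x)\,\mathfrak{C}(F(w_{g};\epsilon_{1},\dots,\epsilon_{n-1}))(x)$, and Lemma \ref{lem:series-expression} turns this into $\bigl(\prod_{\mu}g_{\mu}^{\sigma(\mu)}\bigr)L(g,\bm{s},\Phi,F(w_{g};\epsilon_{1},\dots,\epsilon_{n-1}),\rho)$ --- the two factors $\prod_{\mu}g_{\mu}$ cancelling. As $E_{K}$ acts freely on $K_{g}$ with $[E_{K}:E]=\det(C)$ and $f$ is $E_{K}$-invariant, the left side is $\det(C)\sum_{x\in K_{g}/E_{K}}f(x)$.

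To pass from $F(w_{g};\epsilon_{1},\dots,\epsilon_{n-1})$ to $\mathbb{D}$, I would combine Lemma \ref{lem:another_basis} (change of unit basis, producing the factor $\det(C)$) with the defining relation of a fundamental domain fan --- which is independent of the base point by Proposition \ref{prop:another_cubic_fandamental_domain} --- to obtain $F(w_{g};\epsilon_{1},\dots,\epsilon_{n-1})-\det(C)\,\mathbb{D}\in W(K)+I_{n}(K,E_{K})+\ker\partial_{n}$. Lemma \ref{lem:__wik_vanish} then supplies a finite set of simple cones such that, once $\Phi$ is regular with respect to all of them, $L(g,\bm{s},\Phi,F(w_{g};\epsilon_{1},\dots,\epsilon_{n-1}),\rho)=\det(C)\,L(g,\bm{s},\Phi,\mathbb{D},\rho)$, using linearity of $L(g,\bm{s},\Phi,-,\rho)$ in the fan. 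The two factors $\det(C)$ cancel, leaving $\sum_{x\in K_{g}/E_{K}}f(x)=\bigl(\prod_{\mu}g_{\mu}^{\sigma(\mu)}\bigr)L(g,\bm{s},\Phi,\mathbb{D},\rho)$; summing over $g$ and recalling $L_{\sigma}(\bm{s},\Phi,\mathbb{D},\rho)=\sum_{g}\bigl(\prod_{\mu}g_{\mu}^{\sigma(\mu)}\bigr)L(g,\bm{s},\Phi,\mathbb{D},\rho)$ completes the argument. The set $T$ is the union, over all $g$, of the cones occurring in $F(w_{g};\epsilon_{1},\dots,\epsilon_{n-1})$ together with the finite sets furnished by Lemma \ref{lem:__wik_vanish}.

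The point I expect to need the most care is the exceptional thin sets: Lemma \ref{lem:domain-sum} applies only to functions vanishing on a certain thin set $Z_{g}$, and I must check that $Z_{g}$ can be taken inside the union of the facet hyperplanes of the cones of $F(w_{g};\epsilon_{1},\dots,\epsilon_{n-1})$, so that Condition $P_{1}$ of regularity --- applied to those cones, which is precisely why they are put into $T$ --- forces $\lambda$, hence $f$, to vanish on $K_{g}\cap Z_{g}$. The rest is bookkeeping: tracking the sign factors $\prod_{\mu}g_{\mu}$ and the index $\det(C)=\#(E_{K}/E)$ so that everything cancels.
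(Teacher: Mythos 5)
Your proposal follows essentially the same route as the paper's proof: decompose $H(\bm{s},\lambda)$ over the sign orthants $K_{g}/E_{K}$, pass to the cubic fundamental domain fans $F(w(g);\epsilon_{1},\dots,\epsilon_{n-1})$ via Lemma \ref{lem:domain-sum} and Lemma \ref{lem:series-expression}, then replace them by $\#(E_{K}/E)\,\mathbb{D}$ using Lemma \ref{lem:another_basis} and Lemma \ref{lem:__wik_vanish}, with the same cancellation of the sign factors and of the index $\#(E_{K}/E)=\det(C)$. Your explicit attention to the thin exceptional set of Lemma \ref{lem:domain-sum} and to the support of $\mathfrak{C}(F(w(g);\dots))$ only makes precise what the paper absorbs into the phrase ``by choosing a suitable $T$,'' so the argument is correct and not genuinely different.
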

\begin{proof}
We have
\begin{align*}
H(\bm{s},\lambda) & =\sum_{g\in\{\pm1\}^{n}}\sum_{x\in K_{g}/E_{K}}\frac{\lambda(x)}{\prod_{\mu=1}^{n}|\rho_{\mu}(x)^{s_{\mu}}|}\\
 & =\sum_{g\in\{\pm1\}^{n}}(\prod_{\mu}g_{\mu}^{\sigma(\mu)})\sum_{x\in K_{g}/E_{K}}\frac{\Phi(x)}{\prod_{\mu=1}^{n}(g_{\mu}\rho_{\mu}(x))^{s_{\mu}}}.
\end{align*}
For $g\in\{\pm1\}$, let $w(g)\in K_{g}$ be any element. Then by
Lemma \ref{lem:domain-sum}, we may assume that
\begin{align*}
H(\bm{s},\lambda) & =\sum_{g\in\{\pm1\}^{n}}(\prod_{\mu}g_{\mu}^{\sigma(\mu)})\sum_{x\in K_{g}/E_{K}}\frac{\Phi(x)}{\prod_{\mu=1}^{n}(g_{\mu}\rho_{\mu}(x))^{s_{\mu}}}\\
 & =\frac{1}{\#(E_{K}/E)}\sum_{g\in\{\pm1\}^{n}}(\prod_{\mu}g_{\mu}^{1-\sigma(\mu)})\sum_{x\in K_{g}}\frac{\Phi\mathfrak{C}(F(w(g);\epsilon_{1},\dots,\epsilon_{n-1}))(x)}{\prod_{\mu=1}^{n}(g_{\mu}\rho_{\mu}(x))^{s_{\mu}}}
\end{align*}
by choosing a suitable $T$. By Lemma \ref{lem:series-expression},
we may assume that
\begin{align*}
H(\bm{s},\lambda) & =\frac{1}{\#(E_{K}/E)}\sum_{g\in\{\pm1\}^{n}}(\prod_{\mu}g_{\mu}^{1-\sigma(\mu)})\sum_{x\in K_{g}}\frac{\Phi\mathfrak{C}(F(w(g);\epsilon_{1},\dots,\epsilon_{n-1}))(x)}{\prod_{\mu=1}^{n}(g_{\mu}\rho_{\mu}(x))^{s_{\mu}}}\\
 & =\frac{1}{\#(E_{K}/E)}\sum_{g\in\{\pm1\}^{n}}(\prod_{\mu}g_{\mu}^{\sigma(\mu)})L_{g}(\bm{s},\Phi,F(w(g);\epsilon_{1},\dots,\epsilon_{n-1})).
\end{align*}
Note that we have 
\[
F(w(g);\epsilon_{1},\dots,\epsilon_{n-1})-\#(E_{K}/E)\mathbb{D}\in W(K)+I(K,E_{K})+\ker\partial_{n}
\]
by Lemma \ref{lem:another_basis}. By Lemma \ref{lem:__wik_vanish},
we may assume that
\begin{align*}
H(\bm{s},\lambda) & =\frac{1}{\#(E_{K}/E)}\sum_{g\in\{\pm1\}^{n}}(\prod_{\mu}g_{\mu}^{\sigma(\mu)})L_{g}(\bm{s},\Phi,F(w(g);\epsilon_{1},\dots,\epsilon_{n-1}))\\
 & =\sum_{g\in\{\pm1\}^{n}}(\prod_{\mu}g_{\mu}^{\sigma(\mu)})L_{g}(\bm{s},\Phi,\mathbb{D})\\
 & =L_{\sigma}(\bm{s},\Phi,\mathbb{D})
\end{align*}
\end{proof}
\begin{rem}
It is natural to expect that if $\Phi$ is regular for $\mathbb{D}$
then $H(\bm{s},\lambda)=L_{\sigma}(\bm{s},\Phi,\mathbb{D})$, but
we do not know a proof.
\end{rem}

\subsection{Expression of Hecke L-function}

Let $\chi$ be a Hecke character of $K$. We define $\sigma:\{1,\dots,n\}\rightarrow\{0,1\}$
and $h_{\mu}\in i\mathbb{R}$ by
\[
\chi_{\infty}^{-1}(t)=\prod_{\mu}\left(\frac{t_{\mu}}{|t_{\mu}|}\right)^{\sigma(\mu)}|t_{\mu}|^{-h_{\mu}}.
\]
For $z=\sum_{j}n_{j}\mathfrak{b}_{j}\in\mathbb{C}[I_{K}]$, we define
$L(s,\chi,z)$ by
\[
L(s,\chi,z)=\sum_{j}n_{j}N(\mathfrak{b}_{j})^{s}L(s,\chi,C_{j})
\]
where $C_{j}$ is an ideal class such that $\mathfrak{b}_{j}\in C_{j}$,
and $c(z,s)$ by
\[
c(z,s)=\sum_{j}n_{j}N(\mathfrak{b}_{j})^{s}.
\]
Let $C$ be an ideal class. Then by (\ref{eq:heckeL}), for all $\mathfrak{b}\in C$,
we have
\begin{align*}
\#(O_{K}^{\times}/E_{K})L(s,\chi,C) & =N(\mathfrak{b})^{-s}\sum_{x\in K^{\times}/E_{K}}\chi_{\infty}^{-1}(x)\phi_{\chi,\mathfrak{b}}(x)\left|\rho(x)\right|^{-s}\\
 & =N(\mathfrak{b})^{-s}H((s+h_{1},\dots,s+h_{n}),\lambda)
\end{align*}
where $\lambda$ is defined by 
\[
\lambda=\Phi_{\chi,\mathfrak{b}}(x)\prod_{\mu}\left(\frac{\rho_{\mu}(x)}{|\rho_{\mu}(x)|}\right)^{\sigma(\mu)}.
\]
Let $\mathbb{D}\in C_{n}(K)$ be a fundamental domain fan. Then by
Proposition \ref{prop:const_of_L_of_general_char}, for almost all
$(\mathfrak{p},\mathfrak{q})\in P(K)$ we have 
\begin{align*}
\#(O_{K}^{\times}/E_{K})L(s,\chi,\gamma\mathfrak{b}) & =H((s+h_{1},\dots,s+h_{n}),\Phi_{\chi,\gamma\mathfrak{b}},\sigma)\\
 & =L_{\sigma}((s+h_{1},\dots,s+h_{n}),\Phi_{\chi,\gamma\mathfrak{b}},\sigma)
\end{align*}

\begin{defn}
Let $\chi$ be a Hecke character of $K$, and let $\mathbb{D}$ be
a fundamental domain fan. Let $\gamma$$\in\mathbb{C}[I_{K}]$. Assume
that $\Phi_{\chi,\gamma}\in\mathcal{R}(K,\mathbb{D})$. We define
a function $F_{\chi,\gamma,\mathbb{D}}:\mathbb{C}^{n}\to\mathbb{C}$
of several variables by
\[
F_{\chi,\gamma,\mathbb{D}}(s_{1},\dots,s_{n})=\frac{1}{\#(O_{K}^{\times}/E_{K})}L_{\sigma}((s_{\mu}+h_{\mu})_{\mu},\Phi_{\chi,\gamma},\mathbb{D})
\]
where $(h_{\mu})_{\mu}\in(i\mathbb{R})^{n}$ and $\sigma:\{1,\dots,n\}\to\{0,1\}$
are defined by 
\[
\chi_{\infty}(t)=\prod_{\mu=1}^{n}\left(\frac{t_{\mu}}{|t_{\mu}|}\right)^{\sigma(\mu)}|t_{\mu}|^{-h_{\mu}}.
\]

\end{defn}
Then we have the following theorem.
\begin{thm}
\label{thm:main_thm}Let $K$ be a totally real field of degree $n$,
and let $\chi$ be a Hecke character of $K$. Let $\mathbb{D}$ be
a fundamental domain fan, $C$ an ideal class, and $\mathfrak{b}\in C$
an ideal. For $\mathfrak{p},\mathfrak{q}\in I_{k}$, we set 
\[
\gamma=(1-N(\mathfrak{p})\chi_{I}(\mathfrak{p})\mathfrak{p}^{-1})(1-\chi_{I}^{-1}(\mathfrak{q})\mathfrak{q})\in\mathbb{C}[I_{K}]
\]
 and $f=\gamma\mathfrak{b}$. Then for almost all $(\mathfrak{p},\mathfrak{q})\in P(K)$,
the following assertions hold:
\begin{enumerate}
\item $\Phi_{\chi,f}$ is regular with respect to $\mathbb{D}$.
\item The function $F_{\chi,f,\mathbb{D}}(s_{1},\dots,s_{n})$ is holomorphic
for all $\bm{s}\in\mathbb{C}^{n}$.
\item The function $F_{\chi,f,\mathbb{D}}(s_{1},\dots,s_{n})$ has the following
functional equation
\begin{multline*}
\Gamma_{\sigma}((s_{\mu}+h_{\mu})_{\mu})F_{\chi,f,\mathbb{D}}(s_{1},\dots,s_{r})\\
=i_{\chi}k(\chi)\Gamma_{\sigma}((1-s_{\mu}-h_{\mu}))F_{\chi^{-1},\mathfrak{md}\widehat{f},\varphi(\mathbb{D})}(1-s_{1},\dots,1-s_{r}).
\end{multline*}

\item The function $F_{\chi,f,\mathbb{D}}(s_{1},\dots,s_{n})$ has zeros
at $s_{\mu}-(k+h_{\mu})=0$ for all $1\leq\mu\leq r$ and non-positive
integer $k$ such that $k\equiv\sigma(\mu)$ $\pmod{2}$.
\item The diagonal parts of $F_{\chi,f,\mathbb{D}}$ and $F_{\chi^{-1},\mathfrak{md}\hat{f},\varphi(\mathbb{D})}$
are Hecke L-functions, i.e., we have 
\[
F_{\chi,f,\mathbb{D}}(s,\dots,s)=L(s,\chi,f)
\]
and 
\[
F_{\chi^{-1},\mathfrak{md}\hat{f},\varphi(\mathbb{D})}(1-s,\dots,1-s)=L(1-s,\chi^{-1},\mathfrak{md}\hat{f}).
\]

\item If $\mathfrak{p}$ and $\mathfrak{q}$ are principal ideals then we
have
\[
F_{\chi,f,\mathbb{D}}(s,\dots,s)=c(f,s)L(s,\chi,C)
\]
and
\[
F_{\chi^{-1},\mathfrak{md}\hat{f},\varphi(\mathbb{D})}(1-s,\dots,1-s)=c(\mathfrak{md}\hat{f},s)L(1-s,\chi^{-1},\mathfrak{md}C^{-1}).
\]

\end{enumerate}
\end{thm}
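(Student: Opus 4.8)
The plan is to assemble the six assertions from the machinery built in the preceding sections, with the main analytic input coming from Section \ref{sec:Shintani-L-Lattice} and the main algebraic input from Section \ref{sec:Fan}. First, for assertion (1), I would apply Theorem \ref{thm:regularization} with $z=\mathfrak{b}$ and $T=X(\mathbb{D})$, the finite set of simple cones occurring in the fundamental domain fan $\mathbb{D}$. This immediately gives a finite set $Z$ of rational primes outside of which $\Phi_{\chi,\gamma\mathfrak{b}}=\Phi_{\chi,f}$ is regular with respect to every cone in $X(\mathbb{D})$, hence regular with respect to $\mathbb{D}$ in the sense of Definition \ref{Def_regular_fan}. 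All subsequent assertions are then proved for $(\mathfrak{p},\mathfrak{q})$ avoiding this $Z$ (enlarging $Z$ finitely as needed).

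For assertion (2), once $\Phi_{\chi,f}\in\mathcal{R}(K,\mathbb{D})$, the proposition just before the definition of $L_{\sigma}(\bm{s},\Phi,\mathbb{B})$ gives $F(\Phi_{\chi,f},\mathbb{D})\in\mathcal{S}(K^{*}(\mathbb{R}))$, and then the integral (contour) expression for $L(\bm{s},f,\rho)$ displayed in Section \ref{sub:Shintani-L-function-of-lattice} shows $L_{\sigma}((s_{\mu}+h_{\mu})_{\mu},\Phi_{\chi,f},\mathbb{D})$ is entire in $\bm{s}$; dividing by the constant $\#(O_{K}^{\times}/E_{K})$ preserves holomorphy. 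For assertion (3), I would invoke Proposition \ref{prop:Func_Eq_ShintaniL} with $\Phi=\Phi_{\chi,f}$ and $\mathbb{B}=\mathbb{D}$, substituting $\bm{s}\mapsto(s_{\mu}+h_{\mu})_{\mu}$; the Fourier transform formula $\hat{\Phi}_{\chi,\gamma}=k(\chi)\Phi_{\chi^{-1},\mathfrak{md}\hat{\gamma}}$ from Section \ref{sub:regularization} identifies the transformed data as $\Phi_{\chi^{-1},\mathfrak{md}\widehat{f}}$ and $\varphi(\mathbb{D})$, and Theorem \ref{thm:dual_of_fandamental_domain_is_fandamental_domain} guarantees $\varphi(\mathbb{D})$ is again a fundamental domain fan so that $F_{\chi^{-1},\mathfrak{md}\widehat{f},\varphi(\mathbb{D})}$ is legitimately defined. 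I expect a small bookkeeping point here: matching the root-number factors $i_{\sigma}^{-1}$, $i_{1-\sigma}$, $i_{\chi}$ and the constant $k(\chi)$ with the shift by $h_{\mu}\in i\mathbb{R}$, plus checking the Haar-measure normalizations agree — this is the main routine obstacle but not a conceptual one. Assertion (4) follows from the trivial zeros of the normalized Shintani L-function noted in the introduction: since $\Gamma_{\mathbb{R}}(\sigma(\mu)+s_{\mu})^{-1}$ vanishes along $\sigma(\mu)+s_{\mu}\in 2\mathbb{Z}_{\leq 0}$, the functional equation in (3) forces $F_{\chi,f,\mathbb{D}}$ to vanish along $s_{\mu}+h_{\mu}-k=0$ for non-positive $k\equiv\sigma(\mu)\pmod 2$; I would spell this out by examining the poles of $\Gamma_{\sigma}$ on the two sides.

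For assertion (5), I would specialize $\bm{s}=(s,\dots,s)$. Then $\bm{s}+\bm{h}=(s+h_{1},\dots,s+h_{n})\in X(\lambda)$ for the relevant $\lambda=\chi_{\infty}^{-1}(x)\Phi_{\chi,f}(x)$ — this is exactly the compatibility built into the definitions of $h_{\mu}$ and $X(\lambda)$ — so Proposition \ref{prop:const_of_L_of_general_char} gives $L_{\sigma}((s+h_{\mu})_{\mu},\Phi_{\chi,f},\mathbb{D})=H((s+h_{\mu})_{\mu},\lambda)$, and the chain of identities (\ref{eq:heckeL}) relating $H$ to the partial Hecke L-function yields $F_{\chi,f,\mathbb{D}}(s,\dots,s)=\#(O_{K}^{\times}/E_{K})^{-1}\sum_{C'}L(s,\chi,f\cap C')$-type expression, which by linearity in $\gamma$ and the definition $L(s,\chi,f)=\sum_j n_j N(\mathfrak{b}_j)^s L(s,\chi,C_j)$ collapses to $L(s,\chi,f)$; the same argument with $\chi^{-1}$, $\mathfrak{md}\hat f$, $\varphi(\mathbb{D})$, and the variable $1-s$ gives the second equality. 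Finally, assertion (6) is the observation that when $\mathfrak{p},\mathfrak{q}$ are principal, $\chi_I(\mathfrak{p}^{-1})=N(\mathfrak{p})^{-h}$-type factors become trivial and $\gamma\mathfrak{b}$ lies entirely in the single ideal class $C$, so $L(s,\chi,f)=c(f,s)L(s,\chi,C)$ by pulling the norm factors out of each term; the dual statement follows identically. The single genuinely delicate step is the functional equation in (3): the others are direct citations, but there the interplay of the duality-of-fundamental-domain-fans theorem (Theorem \ref{thm:dual_of_fandamental_domain_is_fandamental_domain}) with the Shintani functional equation (Proposition \ref{prop:Func_Eq_ShintaniL}) and the Fourier transform of $\Phi_{\chi,\gamma\mathfrak{b}}$ must be combined carefully, and getting the constant $i_{\chi}k(\chi)$ exactly right is where the work lies.
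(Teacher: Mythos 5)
Your proposal is correct and follows exactly the route the paper intends: the paper gives no separate proof of Theorem \ref{thm:main_thm}, since it is assembled from Theorem \ref{thm:regularization} for regularity, the contour-integral continuation for holomorphy, Proposition \ref{prop:Func_Eq_ShintaniL} together with $\hat{\Phi}_{\chi,\gamma}=k(\chi)\Phi_{\chi^{-1},\mathfrak{md}\hat{\gamma}}$ and Theorem \ref{thm:dual_of_fandamental_domain_is_fandamental_domain} for the functional equation, the poles of $\Gamma_{\sigma}$ for the trivial zeros, and Proposition \ref{prop:const_of_L_of_general_char} with the computation (\ref{eq:heckeL}) for the diagonal identities. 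Your citations and the handling of the ``almost all $(\mathfrak{p},\mathfrak{q})$'' condition by finitely enlarging the exceptional set match the paper's argument.
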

\begin{rem}
There exist infinitely many principal prime ideals of degree $1$
of $K$ by Chebotarev's density theorem.
\end{rem}

\subsection{\label{sub:Functional-equation-of-Hecke}Functional equation of Hecke
L-function}

In this section, we give a new proof of the functional equation of
the Hecke L-functions by Theorem \ref{thm:main_thm}. Let us state
the functional equation of partial Hecke L-functions. We set $\mathfrak{m}$
be the conductor of $\chi$ and $\mathfrak{d}$ be the different of
$K$. We set 
\[
W_{\chi}=i_{\sigma}k(\chi)\sqrt{N(\mathfrak{md})}.
\]
For an ideal class $C$ of $K$, we define the complete partial Hecke
L-function by
\[
\hat{L}(s,\chi,C)=N(\mathfrak{md})^{s/2}\Gamma_{\chi}(s)L(s,\chi,C)
\]
where $\Gamma_{\chi}(s)=\Gamma_{\sigma}((s+h_{\mu})_{\mu})$. Then
the functional equation of the partial Hecke L-function is as follows
\begin{cor}
Let $C$ and $C'$ be ideal classes such that $CC'=(\mathfrak{md})$
where $(\mathfrak{md})$ is an ideal class which contains $\mathfrak{md}.$
We have
\[
\hat{L}(s,\chi,C)=W_{\chi}\hat{L}(1-s,\chi^{-1},C').
\]
\end{cor}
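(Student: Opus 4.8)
The plan is to obtain the functional equation for $L(s,\chi,C)$ by restricting Theorem \ref{thm:main_thm} to the diagonal $s_1=\dots=s_n=s$ and then cancelling the Cassou-Nogu\`es correction factors. The first point to notice is that the statement of the corollary mentions neither $\mathfrak p$ nor $\mathfrak q$, so we are free to choose \emph{any} convenient auxiliary pair. By Chebotarev's density theorem (the Remark following Theorem \ref{thm:main_thm}) there are infinitely many principal prime ideals of degree $1$; hence I would fix principal prime ideals $\mathfrak p,\mathfrak q$ of degree $1$ with $N(\mathfrak p)\ne N(\mathfrak q)$ lying outside the finite exceptional set of Theorem \ref{thm:main_thm}, together with a fundamental domain fan $\mathbb D$ and an ideal $\mathfrak b\in C$, and put $\gamma=(1-N(\mathfrak p)\chi_I(\mathfrak p)\mathfrak p^{-1})(1-\chi_I^{-1}(\mathfrak q)\mathfrak q)$ and $f=\gamma\mathfrak b$ as in the theorem.

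Next I would apply Theorem \ref{thm:main_thm}(3) with all $s_\mu$ equal to $s$. Because $\mathfrak p$ and $\mathfrak q$ are principal, $\gamma$ is supported on principal ideals, so $f=\gamma\mathfrak b$ is a $\mathbb C$-linear combination of ideals all lying in the class $C$; by the definition of $L(s,\chi,z)$ and Theorem \ref{thm:main_thm}(5) this gives $F_{\chi,f,\mathbb D}(s,\dots,s)=c(f,s)\,L(s,\chi,C)$, and likewise $\mathfrak{md}\widehat f$ is supported on ideals in the class $[(\mathfrak{md})]C^{-1}=C'$, whence $F_{\chi^{-1},\mathfrak{md}\widehat f,\varphi(\mathbb D)}(1-s,\dots,1-s)=c(\mathfrak{md}\widehat f,1-s)\,L(1-s,\chi^{-1},C')$. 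Using $i_\chi=i_\sigma$ and $\Gamma_\sigma((1-s-h_\mu)_\mu)=\Gamma_{\chi^{-1}}(1-s)$, the diagonal of Theorem \ref{thm:main_thm}(3) becomes
\begin{equation}
\Gamma_\chi(s)\,c(f,s)\,L(s,\chi,C)=i_\sigma k(\chi)\,\Gamma_{\chi^{-1}}(1-s)\,c(\mathfrak{md}\widehat f,1-s)\,L(1-s,\chi^{-1},C').
\end{equation}

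The heart of the matter is the ratio $c(\mathfrak{md}\widehat f,1-s)/c(f,s)$. Since $z\mapsto\widehat z$ is an algebra endomorphism of $\mathbb C[I_K]$, one has $\widehat\gamma=(1-\chi_I(\mathfrak p)\mathfrak p)(1-N(\mathfrak q)\chi_I^{-1}(\mathfrak q)\mathfrak q^{-1})$ and $\widehat f=N(\mathfrak b)\,\widehat\gamma\,\mathfrak b^{-1}$, so that $\mathfrak{md}\widehat f=N(\mathfrak b)\,\mathfrak{md}\mathfrak b^{-1}\,\widehat\gamma$. A direct expansion gives $c(\gamma,s)=(1-\chi_I(\mathfrak p)N(\mathfrak p)^{1-s})(1-\chi_I^{-1}(\mathfrak q)N(\mathfrak q)^{s})$ and, likewise, $c(\widehat\gamma,1-s)=(1-\chi_I(\mathfrak p)N(\mathfrak p)^{1-s})(1-\chi_I^{-1}(\mathfrak q)N(\mathfrak q)^{s})$, i.e.\ the Cassou-Nogu\`es factor is \emph{self-dual}: $c(\widehat\gamma,1-s)=c(\gamma,s)$. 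Combining this with $c(f,s)=N(\mathfrak b)^s c(\gamma,s)$ and $c(\mathfrak{md}\widehat f,1-s)=N(\mathfrak b)^{s}N(\mathfrak{md})^{1-s}c(\widehat\gamma,1-s)$ — the exponents coming from $c(\mathfrak a z,t)=N(\mathfrak a)^t c(z,t)$ — yields $c(\mathfrak{md}\widehat f,1-s)/c(f,s)=N(\mathfrak{md})^{1-s}$. Dividing the displayed identity by $c(f,s)$ and multiplying through by $N(\mathfrak{md})^{s/2}$, and using $1-s+\tfrac s2=\tfrac12+\tfrac{1-s}{2}$, produces exactly $\widehat L(s,\chi,C)=W_\chi\,\widehat L(1-s,\chi^{-1},C')$ with $W_\chi=i_\sigma k(\chi)\sqrt{N(\mathfrak{md})}$, the identity holding for all $s$ since both sides are meromorphic.

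I expect the main obstacle to be the bookkeeping in the third paragraph rather than any conceptual difficulty: one must carefully track the powers of $N(\mathfrak b)$ and $N(\mathfrak{md})$ that arise from $\widehat{\gamma\mathfrak b}=N(\mathfrak b)\widehat\gamma\mathfrak b^{-1}$ and from $c(\mathfrak a z,t)=N(\mathfrak a)^t c(z,t)$, and verify the self-duality $c(\widehat\gamma,1-s)=c(\gamma,s)$, which is precisely what makes the $\mathfrak p,\mathfrak q$-dependence disappear. It is also worth noting at the outset that $W_\chi$ is manifestly independent of $C$, so that a single application of the above yields the corollary for every ideal class simultaneously.
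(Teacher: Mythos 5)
Your proof is correct and follows essentially the same route as the paper: specialize Theorem \ref{thm:main_thm} to the diagonal with principal auxiliary primes $\mathfrak{p},\mathfrak{q}$ of degree $1$, identify the diagonal values as $c(f,s)L(s,\chi,C)$ and $c(\mathfrak{md}\hat{f},1-s)L(1-s,\chi^{-1},C')$, and cancel the Cassou-Nogu\`es factors via $c(\hat{f},1-s)=c(f,s)$, which is exactly your self-duality identity $c(\hat{\gamma},1-s)=c(\gamma,s)$. The only difference is that you spell out the norm bookkeeping (the powers of $N(\mathfrak{b})$ and $N(\mathfrak{md})$) that the paper's terse computation leaves implicit.
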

\begin{proof}
Take any ideal $\mathfrak{b}$ of $K$ in $C$. Let $\mathfrak{p}$
and $\mathfrak{q}$ be principal prime ideals of $K$ in Theorem \ref{thm:main_thm}.
Then we have
\[
c(f,s)\Gamma_{\chi}(s)L(s,\chi,C)=i_{\chi}k_{\chi}c(\mathfrak{md}\widehat{f},1-s)\Gamma_{\chi^{-1}}(1-s)L(1-s,\chi^{-1},C')
\]
where $f=(1-N(\mathfrak{p})\chi_{I}(\mathfrak{p})\mathfrak{p}^{-1})(1-\chi_{I}^{-1}(\mathfrak{q})\mathfrak{q})\mathfrak{b}\in\mathbb{C}[I_{K}]$.
Since $c(f,s)=c(\hat{f},1-s)\neq0$, we get the corollary by
\begin{align*}
\Gamma_{\chi}(s)L(s,\chi,C) & =i_{k}k(\chi)N(\mathfrak{md})^{1-s}\Gamma_{\chi^{-1}}(1-s)L(1-s,\chi^{-1},C'),\\
N(\mathfrak{md})^{s/2}\Gamma_{\chi}(s)L(s,\chi,C) & =i_{k}k(\chi)N(\mathfrak{md})^{1/2}N(\mathfrak{md})^{(1-s)/2}\Gamma_{\chi^{-1}}(1-s)L(1-s,\chi^{-1},C'),\\
\hat{L}(s,\chi,C) & =W_{\chi}\hat{L}(1-s,\chi^{-1},C').
\end{align*}

\end{proof}
\bibliographystyle{plain}
\bibliography{reference}

\end{document}